\numberwithin{equation}{section}
\newcommand\asertion[1]{ssertion $({\mathrm{\romannumeral #1\relax}})$}%假设1
\newcommand\obj[1]{\text{obj}\,(\mathcal{#1})}
\newcommand{\ass}{associator}%定义结合子
\newcommand{\asselm}{associative element}%定义结合元
\newcommand{\almost}{para-}%定义可裂
\newcommand{\Hom}{\text{Hom}}
\newcommand{\End}{\text{End}}
\newcommand{\bfs}{Without loss of generality we can assume}%定义可裂
\newcommand{\spr}{\mathbb{R}}
\newcommand{\spo}{\mathbb{O}}%八元数
\newcommand{\lif}{\mathrm{lif}\,}
\newcommand{\ext}{\mathrm{ext}\,}
\newcommand\huaa[1]{\mathscr{A}(#1)}%结合元集
\newcommand\hua[3]{\mathscr{#1}^{#2}(#3)}%循环元集
\newcommand\pureim[1]{\mathit{Im}(#1)}%纯虚元集
\newcommand{\algma}{\mathcal{U}}%代数模的标准字母U
\newcommand{\re}{\text{Re}\,}%
\newcommand\fx[2]{\left<#1,#2\right>}%
\def\O{\mathbb{O}}
\def\R{\mathbb{R}}
\newtheorem{mydef}{Definition}[section]
\newtheorem{rem}[mydef]{Remark}
\newtheorem{eg}[mydef]{Example}
\newtheorem{cor}[mydef]{Corollary}
\newtheorem{prop}[mydef]{Proposition}
\newtheorem{lemma}[mydef]{Lemma}
\newtheorem{thm}[mydef]{Theorem}
\begin{document}

\title[Non-associative categories ]{Non-associative categories  of octonionic bimodules}
\author{Qinghai Huo}
\email[Q.~Huo]{hqh86@mail.ustc.edu.cn}
\address{Department of Mathematics, University of Science and Technology of China, Hefei 230026, China}

\author{Guangbin Ren}
\email[G.~Ren]{rengb@ustc.edu.cn}
\address{Department of Mathematics, University of Science and Technology of China, Hefei 230026, China}

 \date{\today}
 \keywords{Octonions;  category;  regular composition;
	para-linear;  adjoint functor theorem}

\subjclass[2010]{Primary:  17A35;  18A05;  46S10}

\thanks{This work was supported by the NNSF of China (12171448).}

	\begin{abstract} Category is put to work in the non-associative realm in the article. We focus on
  a typical example of non-associative category. Its objects are octonionic bimodules,   morphisms   are   octonionic  para-linear maps,   and compositions are non-associative in general.    The octonionic  para-linear map is the main  object of octonionic Hilbert  theory because of  the octonionic Riesz representation theorem.
An octonionic  para-linear   map $f$  is in general not  octonionic linear since it subjects to the rule
$$ \re\big(f(px)-pf(x)\big) =0.$$
 The   composition should be  modified as $$f\circledcirc g(x):=f(g(x))-\sum_{j=1}^7 e_j\re \Big(f\big(g(e_ix)\big)-f\big(e_ig(x)\big)\Big)$$
so that it  preserves the octonionic para-linearity.

%This kind of compositions is no longer associative in contrast to the classical theory of categories.
 In this non-associative category,  we introduce   the   Hom    and   Tensor functors which constitute  an adjoint pair. We establish  the Yoneda lemma in terms of the new notion of weak functor. To define the exactness in a non-associative category, we introduce the notion of the enveloping category via a universal property.   This allows us to establish the exactness of the Hom functor and Tensor functor.
    	\end{abstract}

\maketitle

\tableofcontents

	\section{Introduction}

Category theory,  initiated by Eilenberg and MacLane  in 1945  \cite{MacLane1998Categories},  provides  deep insights and similarities between seemingly different areas of mathematics
\cite{Awodey_2006cat,MacLane1998Categories,Rotman2009homologicalalgebra,weibel1994homologicalgebra}.
However, category theory is exclusively restricted to the associative realm since	   compositions among morphisms    are always assumed to be   associative.

The purpose of this article is to bring the category into the non-associative realm by
providing  a typical example of  the non-associative category whose     compositions are  non-associative.  This category comes from  octonionic functional analysis.
In octonionic functional analysis, the  central objects are
octonionic  para-linear functionals (see  \cite{huoqinghai2021Riesz}).
This notion can be generalized to a more general setting, which leads to    a  specific  non-associative category. The objects of this category are octonionic bimodules,  the morphisms are octonionic para-linear maps, and the compositions are regular compositions that are non-associative.

We first recall the notion of para-linearity by focusing our attention on  octonionic Hilbert spaces.

\begin{mydef}[\cite{huoqinghai2021Riesz}]\label{def:ohil}
	Suppose that  $H$ is  a left   $\mathbb{O}$-module and also a real Hilbert space with a real-valued inner product $\langle \cdot,\cdot\rangle_{\mathbb  R}$. We call $H$ an \textbf{ $\mathbb{O}$-Hilbert space} if there exists an $\R$-bilinear map $$\left\langle\cdot,\cdot \right\rangle :H\times H \rightarrow \mathbb{O}$$  such that
	$$Re \left\langle\cdot,\cdot \right\rangle=\langle \cdot,\cdot\rangle_{\mathbb  R}$$
	and
	satisfying axioms
	\begin{enumerate}
		\item \textbf{($\mathbb{O}$-\almost linearity)} $\left\langle\cdot,u\right\rangle$ is left $\mathbb{O}$-\almost linear for all $u\in H$.
		\item \textbf{($\mathbb{O}$-hermiticity)} $\left\langle u ,v\right\rangle=\overline{\left\langle v ,u\right\rangle}$ for all $ u,v\in H$.
		\item \textbf{(Positivity)} $\left\langle u ,u\right\rangle\in \spr^+$  and $\left\langle u ,u\right\rangle=0$ if and only if $u=0$.
	\end{enumerate}
\end{mydef}

The definition of octonionic Hilbert spaces can trace back to  1964 by Goldstine and Horwitz \cite{goldstine1964hilbert}.
In fact, they put an extra axiom which states
\begin{equation*}\label{eq:def-redu-403}\left\langle pu,  u\right\rangle=p\left\langle u, u\right\rangle
\end{equation*}
for all $p\in \O$ and  $u\in H$. This axiom turns out to be  {\color{red}redundant}%non-independent
\cite{huoqinghai2021Riesz}.

Definition \ref{def:ohil} provides a unified  viewpoint  for the theory of Hilbert spaces over normed division algebras.  In contrast to the full development of quaternionic Hilbert spaces \cite{horwitz1993QHilbertmod,razon1992Uniqueness,		razon1991projection,soffer1983quaternion,viswanath1971normal,Colombo2008funcalculus,ghiloni2013slicefct,colombo2011noncomfunctcalculus,Ghiloni2018semigp},
the research on octonionic   Hilbert spaces   is very rare.

The map $\left\langle\cdot,u\right\rangle$ induced by the $\O$-inner product
is a real linear map but not an $\O$-linear map. Even though this, it satisfies
$$\mbox{Re} \left\langle px,u\right\rangle = \mbox{Re} (p\left\langle x,u\right\rangle)$$ for any $p\in\O$.
This motivates us to introduce  a notion of $\mathbb{O}$-\almost linear function as
a real linear map $f:H\to\O$ for which
\begin{equation}\label{eq:def-redu-405}\re f(px)= \re (pf(x))
\end{equation}
for any $p\in \O$ and $x\in H$.
% It turn out that
%$$ f(px)-pf(x) =\frac{1}{5}\sum_{i=1}^7 \Big(
%e_i \big(f(px)-pf(x)\big) e_i\Big),$$
%where $\{e_0, e_1, \ldots e_7\}$ is a standard basis of $\mathbb O$.
The importance of octonionic para-linear maps has been shown in
the octonionic Riesz representation theorem which  provides a bijection between
an  $\O$-Hilbert space  $H$  and
its  dual space  consisting  of $\O$-para-linear functions \cite{huoqinghai2021Riesz}.
We will further study   $\spo$-para-linear operators between
octonionic Hilbert spaces.
To this end, we need to 	
generalize the notion of para-linear functions to para-linear maps between $\O$-modules.

Let $M$ and $M'$ be two $\O$-bimodules.
We want to define the notion of left $\O$-para-linear maps from $M$ to $M'$ in terms of the definition equality \eqref{eq:def-redu-405}.
%The morphism between  $\O$-bimodules $M$ and $M'$ should be left $\O$-para-linear maps $f:M\to M'$defined by (\ref{eq:def-redu-405}).
But in contrast to the case where $M'=\O$,  the real part operator in  (\ref{eq:def-redu-405}) is meaningless for  a general  $\O$-module $M'$.
Therefore,    we need to introduce a  real part structure in the target module $M'$, which is given by a projection map
\begin{equation}\label{eq:def of real part-411}
\mbox{Re}: M'\to \huaa{M'}.
\end{equation}
Here $\huaa{M'}$ is the set of associative elements of $M'$.
Notice that in the specific case  $M'=\O$  we have
\begin{equation}\label{eq:def of real part-410}
\re x=\frac{5}{12}x-\frac{1}{12}\sum_{i=1}^7 e_ixe_i.
\end{equation}
It turns out that the real part operator on  an $\O$-bimodule $M'$ also satisfies (\ref{eq:def of real part-410}). Similar results also hold in quaternionic case \cite{ng2007quaternionic}.
%Hence we obtain the notion of

The set $\Hom_{\mathcal{LO}}(M,M')$  of all  left   $\spo$-\almost linear maps  can be endowed with a canonical  $\O$-bimodule structure. Its
$\O$-scalar multiplications are defined by
\begin{eqnarray*}%\label{eqdef:<x,rf>}
	(r\odot f)(x)&:=& f(xr)+A_r(x,f),
	\\
	(f\odot r)(x)&:=& f(xr)+A_r(x,f),
\end{eqnarray*}
where $$A_r(x,f):=f(rx)-rf(x).$$
We find that the  real part of the bimodule $\Hom_{\mathcal{LO}}(M,M')$ is
$$\re \big(\Hom_{\mathcal{LO}}(M,M')\big)=\Hom_\O(M,M').$$
This indicates that it is too small to study the $\O$-linear maps in octonionic functional analysis.
%This gives rise to   Hom functors between non-associative categories of $\O$-bimodules.

Note that the para-linear maps are not preserved under ordinary composition.
We then introduce a new  notion of regular composition which preserves octonionic  para-linear morphisms.
More precisely,
let $M, M', M''$ be $\O$-bimodules and	let  $f\in\Hom_{\mathcal{LO}}(M',M'')$ and  $g\in  \Hom_{\mathcal{LO}}(M,M')$.
We define
$$(f\circledcirc g)(x):=f(g(x))-\sum_{j=1}^7 e_j\re \Big(f\big(g(e_ix)\big)-f\big(e_ig(x)\big)\Big).$$
Then we have
$$f\circledcirc  g\in \Hom_{\mathcal{LO}}(M,M'').$$

The above definition originates from   the specific case  of the multiplication of octonionic matrices.
%where all $\mathbb O$-bimodules are $\mathbb O$.
%The octonionic Riesz representation shows that there is a bijection
%$$\Hom_{\mathcal{LO}}(\O,\O)\cong \O.$$
Notice that
\begin{eqnarray}\label{eq:reg-comp-290}\Hom_{\mathcal{LO}}(\O,\O)=\{R_p: p\in\O\},\end{eqnarray}
where $R_p$ is a right multiplication operator on $\O$ defined by
\begin{eqnarray*}\label{eq:reg-comp-292}
	R_p x=xp.
\end{eqnarray*}
It turns out that
\begin{eqnarray}\label{eq:reg-comp-295}
R_p\circledcirc R_q=R_{pq}
\end{eqnarray}
so that $\Hom_{\mathcal{LO}}(\O,\O)$ is isomorphic to $\O$ as an algebra.
The multiplications of general matrices  have the  similar properties.
%Motivated by composition in (\ref{eq:reg-comp-295}), we  conclude that the composition should be defined as above.
%$$f\circledcirc g(x):=f(g(x))-\sum_{j=1}^7 e_j\re \Big(f\big(g(e_ix)\big)-f\big(e_ig(x)\big)\Big).$$
Notice  that the regular composition is generally non-associative. This gives rise to our specific non-associative category. We shall denote by $\mathcal{LO}$  the typical non-associative   category,
whose objects are  octonionic bimodules,    whose morphisms are  left $\O$-para-linear maps, and  whose compositions are left regular compositions.
The category $\mathcal{RO}$ of right  para-linear maps  can be defined similarly.

Now we move on to study  functors 	between these two  non-associative   categories. We shall introduce several functors, including the  conjugate,  Hom,  and tensor functors.

Firstly, we define the   conjugate functor.
For any $\O$-bimodule, we have an induced bimodule, denoted by $M^{C}$,  whose  bimodule structure  is defined as:
$$p\cdot x=x\overline{p},\quad x\cdot p=\overline{p}x.$$
This leads to a functor,   called the    conjugate  functor,
$$C:\mathcal{LO}\to\mathcal{RO},$$
which is an isomorphism of categories.
The effect of  the conjugate functor is that it can translate  results from    $\mathcal{LO}$ to   $\mathcal{RO}$ and vise versa.

Next, we define $\Hom$ functors between categories $\mathcal{LO}$  and $\mathcal{RO}$. They will   play a central role in the study of dual operators in octonionic functional analysis.

\begin{thm}
	For any $\O$-bimodule $M$, we have a covariant Hom functor $$\Hom_\mathcal{LO}(M,-):\mathcal{LO}\to\mathcal{LO}
	$$ and a contravariant Hom functor $$\Hom_\mathcal{LO}(-,M):\mathcal{LO}\to\mathcal{RO}.$$
	Moreover, both functors are $\O$-linear.
\end{thm}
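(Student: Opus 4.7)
The plan is to construct the functors explicitly on morphisms, check that the image is again a para-linear map, and then verify the functorial axioms together with the claimed $\O$-linearity.

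First, for the covariant case, I would define $\Hom_{\mathcal{LO}}(M,-)$ on objects by $N\mapsto \Hom_{\mathcal{LO}}(M,N)$ equipped with the $\O$-bimodule structure recalled above, and on morphisms by $h\mapsto h_{*}$, where
$$h_{*}(f):=h\circledcirc f,\qquad f\in\Hom_{\mathcal{LO}}(M,N),\ h\in\Hom_{\mathcal{LO}}(N,N').$$
Well-definedness, i.e.\ $h\circledcirc f\in\Hom_{\mathcal{LO}}(M,N')$, is exactly the closure property of regular composition stated in the introduction. Identity preservation $(\mathrm{id}_{N})_{*}=\mathrm{id}_{\Hom_{\mathcal{LO}}(M,N)}$ is immediate from the definition of $\circledcirc$, because the corrector term $f(g(e_{j}x))-f(e_{j}g(x))$ vanishes when the outer map is the identity.

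Next, I would verify that $h_{*}$ is itself para-linear as a map between the Hom bimodules by computing $h_{*}(r\odot f)$ and $r\odot h_{*}(f)$ using the explicit formula $(r\odot g)(x)=g(xr)+A_{r}(x,g)$ and comparing their real parts. The key ingredient is that $h$ is para-linear, which moves the associator $A_{r}$ through $h$ modulo a real-part correction that is absorbed by $\re$. The same comparison, now read as a statement about $h\mapsto h_{*}$, yields the $\O$-linearity of the functor as a morphism $\Hom_{\mathcal{LO}}(N,N')\to\Hom_{\mathcal{LO}}\bigl(\Hom_{\mathcal{LO}}(M,N),\Hom_{\mathcal{LO}}(M,N')\bigr)$.

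The main obstacle is the composition law $(h'\circledcirc h)_{*}=h'_{*}\circledcirc h_{*}$, which is not a formal consequence of associativity because $\circledcirc$ is non-associative in general. My plan is to expand both sides directly from the definition of $\circledcirc$ and to reduce their difference to an expression built from real parts of terms of the form $f(g(e_{j}x))-f(e_{j}g(x))$ inserted in the correcting sums, then exploit the identity $\sum_{j}e_{j}\re(e_{j}y)=y-\re(y)$ in $\O$ together with the para-linearity of $h$ and $h'$. The telescoping of these cross terms is the essential computational core of the theorem. Finally, the contravariant functor $\Hom_{\mathcal{LO}}(-,M)$ is handled by the parallel construction $h^{*}(f):=f\circledcirc h$; the target lands in $\mathcal{RO}$ rather than $\mathcal{LO}$ because $h$ now appears on the right of $\circledcirc$, and a direct check of the scalar-action formulas shows that $h^{*}$ respects the right para-linear conventions. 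Identity, composition and $\O$-linearity then follow by the same telescoping argument as in the covariant case.
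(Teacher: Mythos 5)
Your overall architecture matches the paper's: define $T(h)=h\circledcirc(-)$ and $S(h)=(-)\circledcirc h$, check well-definedness via closure of para-linearity under regular composition, then verify para-linearity of $h_{*}$, the identity and composition laws, and $\O$-linearity; you also correctly single out the composition law as the crux. The gap is in how you propose to handle that crux. The correction term in $h'_{*}\circledcirc h_{*}$ is not built from pointwise real parts in the target module: by definition of regular composition in the category whose objects are the Hom bimodules, it is $-\sum_{j}e_{j}\odot\re\bigl(h'_{*}(A_{e_{j}}(f,h_{*}))\bigr)$, where $\re$ is the real-part projection of the bimodule $\Hom_\mathcal{LO}(M,X'')$, i.e.\ the projection onto $\Hom_\O(M,X'')$ (Theorem \ref{thm:Hom kelie left,bi}), and $e_{j}\odot$ is the scalar action on that bimodule. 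A direct expansion and telescoping using only the octonionic identity $\sum_{j}e_{j}\re(e_{j}y)=\re y-y$ does not engage with this structure, and as stated I do not see how it closes.

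The paper avoids the expansion entirely with two tools you do not invoke. First, the uniqueness principle: a left para-linear map is determined by its restriction to the associative elements of its domain (Corollaries \ref{cor: A_p(x,f)=0 and f(px)=pf(x)} and \ref{lem: f(huaa M)=0 yields f=0}); since $\re\Hom_\mathcal{LO}(M,X)=\Hom_\O(M,X)$, it suffices to evaluate both sides of $T(f_{1}\circledcirc f_{2})=T(f_{1})\circledcirc T(f_{2})$ on $\O$-linear $g$, where every correction term $[\cdot,\cdot,\cdot]$ vanishes (Lemma \ref{lem:vanishing[f,g,x]=0}) and the difference collapses to the single associator $[f_{1},f_{2},g]$, which is zero because one factor is $\O$-linear (Lemma \ref{lem:[fgh]=}). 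Second, the encoding $p\odot g=g\circledcirc R_{p}$ and $f\odot p=R_{p}\circledcirc f$ (Lemmas \ref{lem:Rp prop1} and \ref{lem:Rp prop 2}) turns the para-linearity of $h_{*}$ and the $\O$-linearity of the functor into the identities $A_{p}(g,T(f))=-[f,g,R_{p}]$ and $[f,R_{p},g]=0$, both handled by the same associator lemma; your plan of pushing $A_{r}$ through $h$ ``modulo a real-part correction absorbed by $\re$'' conflates the real part of the Hom bimodule with the pointwise real part, and the bridge between them (Corollary \ref{cor:f in im (U) = f(x) in im (M)}) must be stated and used. If you supply these two ingredients, your route essentially reconstructs the paper's proof; without them the computational core remains open.
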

\noindent

Finally, {we consider  the  tensor functors. We define  the  octonionic tensor product of two $\O$-bimodules $M$, $M'$ as
	$$M\otimes_\O M':=(\re M\otimes_\R \re M')\otimes\O.$$
	We then have a natural tensor functor:
	\begin{align*}
	M\otimes^{ll}_\O-:\qquad\mathcal{LO}\qquad&\xlongrightarrow[]{\hskip1cm} \qquad\mathcal{LO}\\
	X\qquad&\shortmid\!\xlongrightarrow[]{\hskip1cm} \qquad M\otimes_\O X\\
	X\stackrel{f}{\longrightarrow}X'\qquad&\shortmid\!\xlongrightarrow[]{\hskip1cm} \qquad l\text{-}\ext f_M.
	\end{align*}
	Here the map $$f_M:\re M\otimes_\R \re X\to \re M\otimes_\R \re X'$$ is defined  by
	$$f_M(m\otimes x):=m\otimes f(x)$$ for all $m\in \re M$ and $x\in \re X$. The map $l\text{-}\ext f_M$ is  a canonical  \almost linear extension  map of $f_M$.
	Similarly, 	we can also define functors
	$$M\otimes^{lr}_\O-, \qquad\quad  M\otimes^{rr}_\O-, \qquad M\otimes^{rl}_\O-.$$
	It turns out that the two functors $$(M\otimes^{ll}_\O-, \ \Hom_\mathcal{LO}(M,-))$$ constitute  an \textbf{adjoint pair}.}

%We have also introduced some new concepts for the study of non-associative categories.
The above preparation allows us to  study further   exactness  for non-associative categories.
It is hard to consider  exactness
in the non-associativity setting. To overcome the difficulty, we  associate   the non-associative category with an associative category. This associative category is constructed due to  a universal property  and is called   the enveloping category.
The exactness in a non-associative category can thus be defined as the exactness of its enveloping category.

Studying the exactness, we need to introduce the notion of natural associated transformations.   Through a natural associated isomorphism, a functor
between non-associative categories   naturally corresponds to an associated  functor between their enveloping categories.

For example, we can provide  the natural associated functors of the three functors $\Hom_\mathcal{LO}(-,M)$, $\Hom_\mathcal{LO}(M,-)$,  and $	M\otimes^{ll}_\O-$.
It turns out that
the natural associated functors of $\Hom_\mathcal{LO}(-,M)$
is the usual Hom functor $$\Hom_\R(-, \re M):\O\text{-}\textbf{Mod}_\R\stackrel{}{\xlongrightarrow[]{\hskip1cm}}\O\text{-}\textbf{Mod}_\R,$$
{where $\O\text{-}\textbf{Mod}_\R$ stands for the   associative category,  whose objects are $\O$-bimodules,  morphisms are $\R$-linear maps,  and compositions are usual compositions.}
The natural associated functor of $\Hom_\mathcal{LO}(M,-)$ is the usual Hom functor $$\Hom_\R(\re M,-):\O\text{-}\textbf{Mod}_\R\stackrel{}{\xlongrightarrow[]{\hskip1cm}}\O\text{-}\textbf{Mod}_\R,$$
and the natural associated functor of $	M\otimes^{ll}_\O-$   is the usual tensor functor
$$\re M\otimes_\R-:\O\text{-}\textbf{Mod}_\R\stackrel{}{\xlongrightarrow[]{\hskip1cm}}\O\text{-}\textbf{Mod}_\R.$$
These    yield  the exactness of Hom functor and tensor functor in  $\mathcal{LO}$.

In the classical theory of categories, the Yoneda lemma
is arguably  the most important result.
We can extend this result to non-associative categories.
It  gives rise to  a new notion called   weak functors. It turns out that  the usual Hom functor  from  a non-associative category to the $\text{\bf{Sets}}$ category  is merely a weak functor other than a functor.

%At the end of the 	
%  introduction,   we announce that
%	based on the study of \almost linear mappings we can set up
%	  	  	  the octonionic Gelfand-Naimark theorem for octonionic %$C^*$-algebras in a forthcoming paper.

The organization of this article is as follows.
In Section \ref{sec:prel}, we  give a brief overview of the algebra of octonions, and   review some definitions and basic properties on the theory of  octonionic modules;  more details can be found in \cite{huo2021leftmod,Schafer1952repaltalg,Shestakov2016bimod}.
In Section \ref{sec:almost}, we introduce the notion of  $\spo$-\almost linear maps and give a detailed discussion on  this concept.
In Section \ref{sec:cat},   we adapt the method of category  to study $\O$-para-linear maps and initiate the study of   non-associative categories. 	

\hskip1cm

	\section{Preliminaries}\label{sec:prel}
In this section, we review some basic properties of octonions and octonionic modules.

\subsection{Octonions}\label{sec:O}
The algebra $\spo$ of octonions     is a  non-associative, non-commutative, normed division algebra over   $\spr$. As a real vector space, it has  a basis  \cite{baez2002octonions}
$$e_0=1, \quad e_1, \quad \dots, \quad e_7, $$ subject to the multiplication rules
$$e_ie_j+e_je_i=-2\delta_{ij},\quad i,j=1,\dots,7.$$
An octonion can be written as $$x=x_0+\sum_{i=1}^7x_ie_i,\quad x_i\in\spr.$$
Its conjugate   is defined as $$\overline{x}=x_0-\sum_{i=1}^7x_ie_i.$$
The norm of $x$ can be expressed as $$|x|=\sqrt{x\overline{x}}\in \spr; $$ and the real part of $x$ is given by $$\re{x}=x_0=\frac{1}{2}(x+\overline{x}).$$

Following the $\epsilon$-notation from
\cite{bryant2003some},
the multiplication rules can be expressed as
\begin{align}
&e_ie_j=\epsilon_{ijk}e_k-\delta_{ij}.
\end{align}
{The symbol $\epsilon$  is skew-symmetric in  indices. In fact, if we  write  $e^{i j k}$ for the wedge product  $e^{i} \wedge e^{j} \wedge e^{k}$ in  $\Lambda^{3}\left(\left(\mathbb{R}^{7}\right)^{*}\right)$, then we obtain a $3$-form:
	\begin{align*}
	\phi &=\frac{1}{6} \varepsilon_{i j k} e^{ijk} .
	%* \phi &=\frac{1}{24} \varepsilon_{i j k l} e^{ijkl} .
	\end{align*}
	More precisely, we have
	\begin{align*}
	\phi & = e^{123}+e^{145}+e^{167}+e^{246}-e^{257}-e^{347}-e^{356}.
	%* \phi&=e^{4567}+e^{2367}+e^{2345}+e^{1357}-e^{1346}-e^{1256}-e^{1247}.
	\end{align*}
}

 \subsection{$\O$-modules}
There are abundant results on  octonionic bimodules, or more generally, alternative modules. Already  in 1952, Schafer \cite{Schafer1952repaltalg} gave the birepresentations of alternative algebras. Subsequently, Jacobson \cite{jacobson1954structure} determined the irreducible representations for finite
dimensional semi-simple alternative algebras.   A more general study for alternative bimodules is given in \cite{Shestakov2016bimod}. Recently, only for  the  octonionic case, we \cite{huo2021leftmod} introduce the new notions of associative elements and conjugate associative elements, which   give rise to   a new description of the structure of $\O$-modules.

We now  recall  some basic notations and    results on $\O$-modules.
A  real vector space $M$ is called  a left  $\O$-module if it is endowed with an $\O$-scalar multiplication such that    the \textbf{left associator} is    left  alternative.
That is, for all  $p,q\in\O,  \ m\in M$,
\begin{eqnarray}\label{eq:left ass}
[p,q,m]=-[q,p,m],
\end{eqnarray}
%	$$[p,q,m]=-[q,p,m],$$
where the left associator is defined by
$$[p,q,m]=(pq)m-p(qm).$$
It is easy to check that \eqref{eq:left ass} is  equivalent to
$$r(rm)=r^2m \text{\ \ \  for all \ \ }r\in \O,\, \ m\in M.$$

We recall a useful
identity which holds in any left $\O$-module $M$.
\begin{equation}\label{eq:[p,q,r]m+p[q,r,m]=[pq,r,m]-[p,qr,m]+[p,q,rm]}
[p,q,r]m+p[q,r,m]=[pq,r,m]-[p,qr,m]+[p,q,rm]
\end{equation}
for all $p,q,r\in \O$ and $m\in M$.
We shall generalize this identity in  many different settings.

The \textbf{nucleus}  of a left $\O$-module $M$ is defined by   $$\huaa{M}:=\{m\in M\mid [p,q,m]=0,\text { for all } p,q \in \O\}.$$
Any element in the nucleus $\huaa{M} $ is called an \textbf{associative element}.
We denote by $\hua{A}{-}{M}$  the set of all \textbf{conjugate associative elements}
$$\hua{A}{-}{M}:=\{m\in M\mid (pq)m=q(pm)\text{ for all } p,q\in \spo\}.$$
The irreducible left $\O$-modules are already known to be isomorphic to the regular  or conjugate regular modules. We refer to Chapter 11 of the monograph  \cite{zhevlakov1982Rings} for detailed results. We \cite{huo2021leftmod}   describe the structure of a left $\O$-module in terms of associative elements and conjugate associative elements  as follows
\begin{thm}[\cite{huo2021leftmod}]\label{thm:M=OA+OA^-}
	Let $M$ be a left $\O$-module. Then  $$M=\spo\huaa{M}\oplus {\spo}\hua{A}{-}{M}.$$
\end{thm}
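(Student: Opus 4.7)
The plan is to reduce the decomposition to the classical structure theory of left $\O$-modules via the Clifford algebra $C\ell_7$, and then verify the identification on the two types of irreducibles.

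First I would observe that the left multiplications $L_{e_i}\colon m\mapsto e_i m$ on any left $\O$-module $M$ satisfy the Clifford relations $L_{e_i}L_{e_j}+L_{e_j}L_{e_i}=-2\delta_{ij}\,\mathrm{id}_M$. This is obtained by polarizing the alternativity identity $r(rm)=r^2 m$ with $r=e_i+e_j$. Hence $e_i\mapsto L_{e_i}$ extends to a real algebra homomorphism $C\ell_7\to\mathrm{End}_\R(M)$, turning $M$ into a $C\ell_7$-module. Since $C\ell_7\cong\mathrm{Mat}_8(\R)\oplus\mathrm{Mat}_8(\R)$ has two minimal central idempotents, one obtains a canonical splitting $M=M_+\oplus M_-$ into the $\pm 1$ eigenspaces of the volume element $\omega=L_{e_1}L_{e_2}\cdots L_{e_7}$ (which squares to the identity in $C\ell_7$ by a signature count). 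Each summand is a direct sum of copies of one of the two irreducible $C\ell_7$-modules, corresponding respectively to the regular module $\O$ and the conjugate regular module $\O^C$ (with action $p\cdot m:=mp$). This matches the classical classification of irreducible left $\O$-modules from Chapter 11 of \cite{zhevlakov1982Rings}.

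Second, I would verify the identification on the irreducible pieces. On the regular module $\O$, the condition $[p,q,m]=0$ for all $p,q$ forces $m$ into the nucleus of $\O$, which is $\R$; hence $\huaa{\O}=\R$ and $\spo\huaa{\O}=\O$. A symmetric calculation, using conjugate associativity $(pq)m=q(pm)$ together with the already established Clifford antisymmetry $e_j(e_i m)=-e_i(e_j m)$ for $i\ne j$, shows $\hua{A}{-}{\O}=0$. On the conjugate regular $\O^C$ the roles swap: $\huaa{\O^C}=0$ and $\hua{A}{-}{\O^C}=\R$, yielding $\spo\hua{A}{-}{\O^C}=\O^C$. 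Summing over the decomposition from the first step then identifies $M_+=\spo\huaa{M}$ and $M_-=\spo\hua{A}{-}{M}$, which gives the theorem.

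The directness of the sum in fact admits a direct argument independent of the structure theorem: if $m\in\huaa{M}\cap\hua{A}{-}{M}$ then $(e_ie_j)m=e_i(e_jm)=e_j(e_im)$, while the Clifford antisymmetry on any left $\O$-module forces $e_i(e_jm)=-e_j(e_im)$ for $i\ne j$, so $e_j(e_im)=0$ and hence $m=0$; this extends additively to $\spo\huaa{M}\cap\spo\hua{A}{-}{M}=0$. The main obstacle is therefore the spanning assertion that every $m\in M$ lies in $\spo\huaa{M}+\spo\hua{A}{-}{M}$, where one really needs either the invocation of the structure theorem or the explicit idempotents $\pi_\pm=\tfrac{1}{2}(\mathrm{id}_M\pm\omega)$ coming from the $C\ell_7$-action. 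Identity \eqref{eq:[p,q,r]m+p[q,r,m]=[pq,r,m]-[p,qr,m]+[p,q,rm]} is the natural tool for propagating the defining conditions across all of $\O$ and for checking that the images of $\pi_\pm$ indeed coincide with the two geometric summands.
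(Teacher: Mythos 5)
The paper itself gives no proof of this theorem: it is quoted verbatim from the authors' earlier classification paper \cite{huo2021leftmod}, so there is no in-text argument to compare against line by line. Judged on its own terms, your reconstruction is essentially correct and follows the natural (and, as far as the cited reference goes, standard) route: polarizing $r(rm)=r^2m$ to get the Clifford relations for the $L_{e_i}$, invoking $\clifd{0,7}\cong \mathrm{Mat}_8(\R)\oplus\mathrm{Mat}_8(\R)$ and the central involution $\omega$ to split $M=M_+\oplus M_-$ into semisimple pieces built from the two $8$-dimensional irreducibles, and then matching these with $\spo\huaa{M}$ and $\spo\hua{A}{-}{M}$ via the computations $\huaa{\O}=\R$, $\hua{A}{-}{\O}=0$, $\huaa{\O^C}=0$, $\hua{A}{-}{\O^C}=\R$. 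The one conceptual point you use implicitly and should state is that a left $\O$-module structure is \emph{exactly} the same data as a unital $\clifd{0,7}$-module structure (one must check $L_p^2=L_{p^2}$ for a general $p$ with nonzero real part, which follows from the anticommutation relations); this is what lets you transport the Clifford classification back to $\O$-module isomorphisms and to conclude that the irreducible pieces are $\O$ and $\O^C$ as $\O$-modules, not merely as $\clifd{0,7}$-modules.

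Two softer spots. First, the claims $\hua{A}{-}{\O}=0$ and $\huaa{\O^C}=0$ are asserted rather than proved; the hint you give (Clifford antisymmetry $e_j(e_im)=-e_i(e_jm)$ combined with conjugate associativity) only yields $(e_ie_j)m=-( e_je_i)m$, which is no contradiction, so an actual finite verification (e.g.\ testing $[p,q,m]$ on a basis, as with $(e_1e_2)e_7\neq e_2(e_1e_7)$) is still needed. Second, your ``independent'' directness argument proves only $\huaa{M}\cap\hua{A}{-}{M}=0$; this does not ``extend additively'' to $\spo\huaa{M}\cap\spo\hua{A}{-}{M}=0$, since an element of the intersection of the two $\O$-spans need not itself be associative or conjugate associative. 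The directness genuinely comes from the eigenspace containments $\spo\huaa{M}\subseteq M_+$ and $\spo\hua{A}{-}{M}\subseteq M_-$, i.e.\ from the main argument, so you should not present the additive remark as a self-contained alternative.
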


We now recall the definition of $\O$-bimodules. Since $\O$ is a  special alternative algebra, the notion of an $\O$-bimodule  is just an alternative bimodule over $\O$ \cite{Schafer1952repaltalg,jacobson1954structure}. Here we state the definition as follows.
\begin{mydef}
	A left  $\O$-module $M$  is called an  \textbf{$\O$-bimodule}  if  there is a right $\R$-bilinear multiplications  such that 	for any $p,q\in \O$ and $m\in M$,
	$$[p,q,m]=[m,p,q]=[q,m,p].$$
	Here we denote
	$$[q,m,p]:=(qm)p-q(mp) $$
	and denote the similar notation for $[m,p,q]$.
 	
\end{mydef}

We denote by $\text{Reg}\,{\O}$, or just $\O$ if there is no confusion, the regular bimodule  with the multiplication given by the product
in $\O$. Clearly,   the $\O$-bimodule $\text{Reg}\,{\O}$ is
irreducible. Moreover, it is the only
irreducible $\O$-bimodule; see  the results by Schafer \cite{Schafer1952repaltalg} and   Jacobson \cite{jacobson1954structure}.

\section{$\spo$-\almost linearity}\label{sec:almost}
Recently, we \cite{huoqinghai2021Riesz} introduce a notion of $\spo$-\almost linear functional. In fact, we can generalize this notion to a more general case of  $\spo$-\almost linear maps between $\spo$-modules.
The  $\spo$-\almost linearity  plays a central role in the study of octonionic functional analysis.  We shall give  a complete discussion about this concept in this section.
%Many theorems and lemmas will be used in the sequel.
\subsection{Real part operator on $\O$-bimodules}

To generalize the notion of \almost linear functions to  \almost linear maps, we need to generalize the notion of the  real part structure of $\O$ to general $\O$-bimodules.

Let $M$ be an $\O$-bimodule. Since the only irreducible $\O$-bimodule is the regular bimodule   $\text{Reg}\,{\O}$, it follows from
Theorem \ref{thm:M=OA+OA^-} that
\begin{eqnarray}\label{eq: M=O ReM}
M=\O\huaa{M}.
\end{eqnarray}%$$M\cong\O\otimes\huaa{M}.$$
One can easily check that $$e_i\huaa{M}\cap e_j\huaa{M}=\{0\}$$ for each $i\neq j\in\{0,\dots,7\}$.
{Indeed, supposing  on the contrary, we can find $$e_ix=e_jy$$ for some non-zero elements $x,y\in \huaa{M}$. Then $x=(\overline{e_i}e_j)y$. It follows that
$$[p,q,x]=[p,q,\overline{e_i}e_j]y=0$$ for arbitrary $p,q\in \O$.
This forces that $y=0$, a contradiction.}

Hence we have a direct sum decomposition of $M$
\begin{eqnarray}\label{eq:M=sum ei ReM}
M= \bigoplus_{i=0}^7e_i\huaa{ M}.
\end{eqnarray}
%$$M= \bigoplus_{i=0}^7e_i\huaa{ M}.$$
We thus define the real part operator as the projective operator
$$\re: M\to \huaa{M}.$$

The right multiplication in an $\O$-bimodule is uniquely determined by its left multiplication, which is completely different from the quaternionic case.
We denote	the \textbf{commutative center} of  $M$ by: $$\hua{Z}{}{M}:=\{x\in M\mid px=xp, \text{ for all }p\in \O\}.$$
Then we have	\begin{eqnarray}\label{eq:reM=huaaM=ZM}
\re M=\huaa{M}=\hua{Z}{}{M}.
\end{eqnarray}

We collect some properties of the real part operator as follows.
\begin{thm}\label{lem:real part on good bimod }
	Let  $M$ be an $\O$-bimodule.
	We have a concrete expression of the real part operator in terms of scalar multiplications
	\begin{equation}\label{eq:def of real part}
	\re x=\frac{5}{12}x-\frac{1}{12}\sum_{i=1}^7 e_ixe_i.
	\end{equation}
	Moreover, for all $x\in M$, we have
	\begin{enumerate}
		\item 	$\re^2x=\re x$.
		
		\item  For all   $p,q\in \O$,  \begin{eqnarray}
		\re (p\re (x))&=&(\re p)\re(x),\label{eq:real part on good bimod }\\
		\re [p,q,x]&=&\re [p,x]=0.\label{prop:re part}
		\end{eqnarray}
		\item  $x=\sum_{i=0}^7 e_i\re(\overline{e_i}x)$.
 	\end{enumerate}
	
\end{thm}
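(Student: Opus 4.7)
The plan is to exploit the direct sum decomposition $M = \bigoplus_{i=0}^7 e_i \huaa{M}$ already established in \eqref{eq:M=sum ei ReM}. Fix $x \in M$ and write $x = \sum_{j=0}^7 e_j x_j$ uniquely with $x_j \in \huaa{M}$. By \eqref{eq:reM=huaaM=ZM} each $x_j$ lies in $\huaa{M} = \hua{Z}{}{M}$, so $x_j$ commutes with every element of $\O$ and is nuclear with respect to every pair; this will let me ``pull $x_j$ out'' of every octonionic expression in which it appears.

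For assertion (1), since both sides are $\R$-linear in $x$, it suffices to check the formula on each summand $e_j x_j$. The nuclearity and centrality of $x_j$ reduce each such computation to a purely octonionic one, and the whole claim then follows from the two scalar identities
$$\sum_{k=1}^7 e_k^2 = -7, \qquad \sum_{k=1}^7 (e_k e_j) e_k = 5 e_j \quad (1 \le j \le 7),$$
which together give $\frac{5}{12}(e_j x_j) - \frac{1}{12}\sum_{k=1}^7 e_k (e_j x_j) e_k = \delta_{j,0}\, x_j$, summing to $x_0 = \re x$. These two octonionic identities are verified by a direct computation from the multiplication table encoded by $\phi$, and form the only non-formal step in the proof.

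For assertion (2), $\re^2 = \re$ is immediate from the $j=0$ case above, which shows $\re$ acts as the identity on $\huaa{M}$. For $\re(p\re x) = (\re p)\re x$, I would write $p = \re p + \sum_{i=1}^7 p_i e_i$ and observe that for $y := \re x \in \huaa{M}$ the expression $py = (\re p)y + \sum_{i=1}^7 e_i (p_i y)$ is already in the canonical form of \eqref{eq:M=sum ei ReM}, with $\huaa{M}$-part $(\re p) y$. For the associator identity, the key ``pull-out'' observation is that, because $x_j$ is nuclear,
$$[p, q, e_j x_j] = [p, q, e_j]\, x_j,$$
so $[p, q, x] = \sum_j [p, q, e_j] x_j$. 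Each octonionic associator $[p, q, e_j]$ is purely imaginary, so applying $\re(\alpha x_j) = (\re \alpha) x_j$ (proved in the previous step) gives $\re[p, q, x] = 0$. The identity $\re[p, x] = 0$ is handled identically via $[p, e_j x_j] = [p, e_j] x_j$, since $[p, e_j]$ is also purely imaginary.

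For assertion (3), the relations $\overline{e_0} = 1$ and $\overline{e_i} e_j = \delta_{ij} - \epsilon_{ijk} e_k$ combined with the nuclearity of $x_j$ yield $\overline{e_i} x = \sum_{j=0}^7 (\overline{e_i} e_j)\, x_j$, whose real part picks out exactly $x_i$ (the coefficient of $e_0$ in $\sum_j (\overline{e_i}e_j) x_j$). Summing then gives $\sum_{i=0}^7 e_i \re(\overline{e_i} x) = \sum_i e_i x_i = x$. The main obstacle of the theorem is therefore the elementary octonionic identity $\sum_{k=1}^7 (e_k e_j) e_k = 5 e_j$; once this is in hand, every remaining step is a formal consequence of the decomposition \eqref{eq:M=sum ei ReM} and the nuclear/central behavior of $\huaa{M}$.
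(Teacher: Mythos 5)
Your proposal is correct and follows essentially the same route as the paper: decompose $x=\sum_j e_jx_j$ via \eqref{eq:M=sum ei ReM}, use the centrality and nuclearity of $\huaa{M}$ from \eqref{eq:reM=huaaM=ZM} to pull the $x_j$ out, and reduce everything to octonionic scalar identities (your $\sum_{k=1}^7(e_ke_j)e_k=5e_j$ is exactly the paper's ``check \eqref{eq:def of real part} directly for $M=\O$'' step). The only cosmetic difference is that you derive \eqref{eq:real part on good bimod } and \eqref{prop:re part} straight from the projection definition of $\re$ and the pure imaginarity of octonionic associators and commutators, where the paper routes them through formula \eqref{eq:def of real part}; both are sound.
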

\begin{proof}
	We  prove \eqref{eq:def of real part}.
	One can check directly 	that \eqref{eq:def of real part} holds for the case $M=\O$.   The general case then follows. Indeed, for any $x\in M$, write $x=\sum_{j=0}^7e_jx_j$ with $x_j\in \huaa{M}$. Then according to \eqref{eq:reM=huaaM=ZM}, we have
	\begin{align*}
	\frac{5}{12}x-\frac{1}{12}\sum_{i=1}^7 e_ixe_i&=\frac{5}{12}\sum_{j=0}^7e_jx_j-\frac{1}{12}\sum_{i=1}^7 e_i\left(\sum_{j=0}^7e_jx_j\right)e_i\\
	&=\sum_{j=0}^7\left(\frac{5}{12}e_j-\frac{1}{12}\sum_{i=1}^7 e_ie_je_i\right ) x_j\\
	&=\sum_{j=0}^7\re(e_j)x_j\\
	&=x_0.
	\end{align*}
	This proves \eqref{eq:def of real part}.
	
	The  proof of a\asertion{1}  is trivial. We prove \eqref{eq:real part on good bimod }. \bfs\ $x\in \huaa{M}$.
	It follows from a\asertion{1} that
	\begin{align*}
	\re(px)&=\frac{5}{12}px-\frac{1}{12}\sum_{i=1}^7 e_i(px)e_i\\
	&=\frac{5}{12}px-\frac{1}{12}\sum_{i=1}^7 (e_ipe_i)x\\
	&=(\re p)x.
	\end{align*}
	Then \eqref{prop:re part} follows from \eqref{eq:real part on good bimod } directly.

	Let $x=\sum_{i=0}^7e_ix_i$. Then it follows from \eqref{eq:real part on good bimod } that
	$$\re(\overline{e_i} x)=\re\sum_{i=0}^7(e_j\overline{e_i}) x_j=x_i.$$
	This proves a\asertion{3}.
\end{proof}

\begin{rem}
	Recall that in a  quaternionic bimodule, the real part operator is defined by
	$$\re x=\frac{1}{4}\sum _{e\in B}\overline{e}xe \quad (x\in X).$$
	Where $B := \{1,i, j, k\}$ is a basis of the quaternions $\mathbb{H}$ (\cite{ng2007quaternionic}). And  the polarization identity holds:
	\begin{eqnarray}
	x=\sum _{e\in B}e\re(\overline{e} x).
	\end{eqnarray}
	\eqref{eq:def of real part} is a similar result  in octonionic case.
	
 \end{rem}

Let $M$, $M'$ be two $\O$-bimodules.
A map $f:M\to M'$ is called \textbf{$\O$-linear} if $$  f(px)=pf(x), \quad f(xp)=f(x)p$$
for all $x\in M, \ p\in\O$.	 Note that the right multiplication in an $\O$-bimodule is uniquely determined by its left multiplication. We conclude that
\begin{equation}\label{eq:HomO(M,M')=l-Hom(M,M')}
\begin{split}
\Hom_\O(M,M')&=\{f\in \Hom_\R(M,M')\mid f(px)=pf(x) \text{ for all } x\in M, \ p\in\O \}\\
&=\{f\in \Hom_\R(M,M')\mid f(px)=pf(x) \text{ for all } x\in M, \ p\in\O \}.
\end{split}
\end{equation}

It follows from  \eqref {eq:def of real part} that any $\O$-linear map commutes with the  real part operator.
\begin{lemma}\label{lem:f re=re f}
	Let $M$, $M'$ be two $\O$-bimodules and $f\in \Hom_\O(M,M')$.
	Then $$f(\re x)=\re f(x)$$ for all $x\in M$.
\end{lemma}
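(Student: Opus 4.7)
The plan is to derive the identity directly from the explicit formula for the real part operator established in Theorem \ref{lem:real part on good bimod }, together with the two-sided $\O$-linearity of $f$.

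More precisely, I would proceed as follows. First, I would invoke formula \eqref{eq:def of real part}, which expresses the real part operator on any $\O$-bimodule $M$ as
$$\re x = \frac{5}{12}x - \frac{1}{12}\sum_{i=1}^7 e_i x e_i.$$
This formula is uniform in the bimodule, so it holds simultaneously on $M$ and on $M'$. Next, because $f\in\Hom_\O(M,M')$, the characterization \eqref{eq:HomO(M,M')=l-Hom(M,M')} tells us that $f$ is $\R$-linear and intertwines both left and right multiplication by octonions; in particular $f(e_i x) = e_i f(x)$ and $f(x e_i) = f(x) e_i$, so that $f(e_i x e_i) = e_i f(x) e_i$.

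Applying $f$ to the expression for $\re x$ and using $\R$-linearity to pull $f$ inside the sum, then using the two-sided octonionic linearity to pass $f$ through the sandwich $e_i(\cdot)e_i$, we get
$$f(\re x) = \frac{5}{12}f(x) - \frac{1}{12}\sum_{i=1}^7 e_i f(x) e_i,$$
and the right-hand side is exactly $\re f(x)$ by the same formula applied in $M'$. This completes the proof.

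There is essentially no obstacle: the entire argument is a one-line computation once the explicit formula \eqref{eq:def of real part} is in hand. The only point worth flagging is the implicit use of the fact that the right multiplication on an $\O$-bimodule is determined by the left multiplication, which is what makes the identity $f(x e_i) = f(x) e_i$ automatic from left $\O$-linearity in this bimodule setting; this is already recorded around \eqref{eq:HomO(M,M')=l-Hom(M,M')} and so requires no new work.
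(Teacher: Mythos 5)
Your proof is correct and follows exactly the route the paper intends: the lemma is stated there with the single remark that it ``follows from \eqref{eq:def of real part}'', i.e.\ apply $f$ to $\re x=\frac{5}{12}x-\frac{1}{12}\sum_{i=1}^7 e_ixe_i$ and use the two-sided $\O$-linearity to pass $f$ through the sandwich $e_i(\cdot)e_i$. Your explicit write-up, including the remark that right $\O$-linearity comes for free via \eqref{eq:HomO(M,M')=l-Hom(M,M')}, fills in precisely the computation the paper leaves implicit.
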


\subsection{Second associator}\label{sec:second ass}
To study the  \almost linear map, we need  to introduce the notion of the second associators related to a linear map  between $\O$-modules and establish some identities associated with this notion.

Let $M$, $M'$ be two left $\O$-modules. For any  $f\in \Hom_\R(M,M')$, $p\in \O$ and $x\in M$, we denote
$$A_p(x,f):=f(px)-pf(x) $$ and call it the \textbf{second  left associator}  of $f$.
Similarly we can introduce the \textbf{second right associator} between right  $\O$-modules via  $$B_p(f,x):=f(x)p-f(xp).$$

By definition, we can generalize identity \eqref{eq:[p,q,r]m+p[q,r,m]=[pq,r,m]-[p,qr,m]+[p,q,rm]} as follows.

\begin{lemma}\label{lem: associator prop1 algm}
	Let $M$, $M'$ be two  $\O$-modules and $f\in \Hom_\R(M,M')$. Assume  that $\alpha \in \R$,  $p,q\in \O$ and $x\in M$. 	The following statements hold.	
	\begin{enumerate}
		\item If $M$, $M'$ are two left $\O$-modules, then
		\begin{equation}\label{eq: associator 1 id}
		A_{pq}(x,f)=f([p,q,x])-[p,q,f(x)]+pA_q(x,f)+A_p(qx,f).
		\end{equation}
		\item If  $M$, $M'$ are two right $\O$-modules,  then
		\begin{equation}\label{eq:associator 2 id:Bpq(f,x)}
		B_{pq}(f,x)=f([x,p,q])-[f(x),p,q]+B_p(f,x)q+B_q(f,xp).
		\end{equation}
		\item If $M$, $M'$ are two  $\O$-modules, then $$A_\alpha(x,f)=B_\alpha(f,x)=0.$$
	\end{enumerate}	
\end{lemma}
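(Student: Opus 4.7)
The plan is to prove each assertion by a direct unfolding of the definitions of $A_p$ and $B_p$, relying only on the defining identities $(pq)m=p(qm)+[p,q,m]$ and $m(pq)=(mp)q-[m,p,q]$ and on the $\R$-linearity of $f$. This will be a generalization, to the level of an arbitrary $\R$-linear map, of the module identity \eqref{eq:[p,q,r]m+p[q,r,m]=[pq,r,m]-[p,qr,m]+[p,q,rm]} displayed just before.

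For assertion (i), I would compute $f((pq)x)$ in two different ways and compare. First, decompose $(pq)x=p(qx)+[p,q,x]$, so that additivity of $f$ gives
$$f((pq)x)=f(p(qx))+f([p,q,x]).$$
Next, peel off the second left associators one at a time: $f(p(qx))=pf(qx)+A_p(qx,f)$ and $f(qx)=qf(x)+A_q(x,f)$, which combine into
$$f((pq)x)=p(qf(x))+pA_q(x,f)+A_p(qx,f)+f([p,q,x]).$$
On the other hand, $(pq)f(x)=p(qf(x))+[p,q,f(x)]$ by the defining identity for the associator in $M'$. Subtracting yields $A_{pq}(x,f)$ on the left and precisely the right-hand side of \eqref{eq: associator 1 id} on the right.

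Assertion (ii) is proved by the mirror computation: starting from $B_{pq}(f,x)=f(x)(pq)-f(x(pq))$, decompose $x(pq)=(xp)q-[x,p,q]$, peel off $B_q(f,xp)$ and then $B_p(f,x)$, and compare with $f(x)(pq)=(f(x)p)q-[f(x),p,q]$. Alternatively, (ii) can be deduced from (i) via the conjugate functor $C:\mathcal{LO}\to\mathcal{RO}$ mentioned in the introduction, which swaps left and right module structures and converts $A_p$ into $B_{\bar p}$. Assertion (iii) is immediate: since $\alpha\in\R$ is central in $\O$, the $\R$-linearity of $f$ gives $f(\alpha x)=\alpha f(x)$ and $f(x\alpha)=f(x)\alpha$, so both $A_\alpha(x,f)$ and $B_\alpha(f,x)$ vanish.

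No substantial obstacle is expected; the only thing to watch is the non-associativity of $\O$, which forces one to keep parentheses explicit throughout. In particular, a step like $p(qf(x))=(pq)f(x)$ must never be performed tacitly; each such reassociation must be recorded as a genuine associator bracket in $M'$, and it is exactly these leftover brackets that produce the terms $-[p,q,f(x)]$ in (i) and $-[f(x),p,q]$ in (ii).
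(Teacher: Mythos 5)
Your proposal is correct and follows essentially the same route as the paper: for (i) expand $f((pq)x)$ via $(pq)x=p(qx)+[p,q,x]$, peel off $A_p(qx,f)$ and $A_q(x,f)$, and compare with $(pq)f(x)=p(qf(x))+[p,q,f(x)]$; the paper likewise proves (ii) by the mirror computation and dismisses (iii) as trivial. No gaps.
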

\begin{proof} A direct calculation shows that 		\begin{align*}
	A_{pq}(x,f)&=f((pq)x)	-(pq)f(x)\\
	&=f(p(qx)+[p,q,x])	-(pq)f(x)\\
	&=pf(qx)+A_p(qx,f)+f([p,q,x])-(pq)f(x)\\
	&=p(qf(x))+pA_q(x,f)+A_p(qx,f)+f([p,q,x])-(pq)f(x)\\
	&=f([p,q,x])-[p,q,f(x)]+pA_q(x,f)+A_p(qx,f).
	\end{align*}
	This proves a\asertion{1}. A\asertion{2} can be proved  similarly. A\asertion{3} is trivial.
\end{proof}
\begin{rem} Let $f$ be a map between two  left (right) $\O$-modules.
	We denote the
	evaluation   of $f$  on an element  $x$ by
	\begin{eqnarray}\label{eq:notation f(x)}
	\fx{x}{f}:=f(x)\quad (\text{resp. }\fx{f}{x}:=f(x)).
	\end{eqnarray}
	%$$\fx{x}{f}:=f(x)\quad (\text{resp. }\fx{f}{x}:=f(x)).$$
	This notation is reminiscent of the inner product on a Hilbert space.

	If   we write  %$f(x)=\fx{x}{f}$  and
	$$A_p(x,f)=\fx{px}{f}-p\fx{x}{f}=:[p,x,f],$$
	as we have denoted in \cite{huoqinghai2021Riesz},  then  identity \eqref{eq: associator 1 id} can be rephrased as
	$$p[q,x,f]+\fx{[p,q,x]}{f}=[pq,x,f]-[p,qx,f]+[p,q,\fx{x}{f}],$$
	which is an analogue of identity \eqref{eq:[p,q,r]m+p[q,r,m]=[pq,r,m]-[p,qr,m]+[p,q,rm]}.		
	
	Similarly, we denote %$f(x)=\fx{f}{x}$   and
	$$B_p(f,x)=\fx{f}{x}p-\fx{f}{xp}=:[f,x,p].$$ Then   identity \eqref{eq:associator 2 id:Bpq(f,x)} can be interpreted as an identity   similar  to  identity \eqref{eq:[p,q,r]m+p[q,r,m]=[pq,r,m]-[p,qr,m]+[p,q,rm]}.

	Although such notations
	provide   a neat mnemonic, 		
	we shall still keep  the  notations $A_p(x,f)$ and $B_p(f,x)$  throughout in order to avoid any confusions. The notation given in \eqref{eq:notation f(x)} will be used many times in the sequel.
\end{rem}

 The following   identities related to   second associators are crucial in applications.

\begin{prop}\label{prop:left,left associator prop}
	Let $M,\ M'$ be two  left $\spo$-modules. Then for all $p\in \spo,\ x\in M$, and $f\in \Hom_{\spr}(M,M')$, the following identities hold:
	\begin{enumerate}
		\item $A_{\overline{p}}(x,f)=-A_p(x,f)$;
		\item $A_{p^2}(x,f)=pA_p(x,f)+A_p(px,f)$;
		\item $A_p(\overline{p}x,f)=pA_p(x,f),\qquad A_p({p}x,f)=\overline{p}A_p(x,f)$;
		\item $A_p(\overline{p}^kx,f)=p^kA_p(x,f), k\in \mathbb{N}$.
		\item $A_{p^{k+1}}(x,f)=pA_{p^k}(x,f)+\overline{p}^kA_p(x,f), k\in \mathbb{N}$.
	\end{enumerate}
	Similar results hold for the   second right associators.	
\end{prop}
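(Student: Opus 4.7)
The plan is to derive the five identities in order from the associator identity \eqref{eq: associator 1 id} combined with the left alternating axiom and elementary facts about octonionic conjugation.

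Part (i) is immediate: the map $p \mapsto A_p(x,f)$ is $\R$-linear in $p$, and by Lemma~\ref{lem: associator prop1 algm}(iii) it vanishes on $\R$; since $p + \overline{p} = 2\re p \in \R$, one obtains $A_p(x,f) + A_{\overline{p}}(x,f) = 0$. Part (ii) follows by specializing \eqref{eq: associator 1 id} to $q = p$ and killing the associators $[p,p,x]$ and $[p,p,f(x)]$ via left alternativity in $M$ and $M'$.

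For (iii), I apply \eqref{eq: associator 1 id} in the two configurations $(p,q) = (p,\overline{p})$ and $(\overline{p},p)$. In both, $pq = |p|^2 \in \R$, so the left-hand side vanishes by Lemma~\ref{lem: associator prop1 algm}(iii). The associators $[p,\overline{p},\cdot]$ and $[\overline{p},p,\cdot]$ also vanish: substituting $\overline{p} = 2\re(p) - p$ and using real-multilinearity of the associator reduces them to multiples of $[p,1,\cdot] = 0$ and $[p,p,\cdot] = 0$. What remains is
\[
p A_{\overline{p}}(x,f) + A_p(\overline{p}x,f) = 0, \qquad \overline{p} A_p(x,f) + A_{\overline{p}}(px,f) = 0,
\]
and substituting (i) yields the two desired formulas.

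Part (iv) I prove by induction on $k$, the case $k=1$ being the first half of (iii). For the inductive step I rewrite $\overline{p}^{k+1} x = \overline{p}(\overline{p}^k x)$ and apply (iii) followed by the induction hypothesis to obtain $A_p(\overline{p}^{k+1}x,f) = p A_p(\overline{p}^k x,f) = p \cdot p^k A_p(x,f) = p^{k+1} A_p(x,f)$. Finally for (v), I apply \eqref{eq: associator 1 id} to the pair $(p, p^k)$; the associators $[p, p^k, \cdot]$ vanish, so only $p A_{p^k}(x,f) + A_p(p^k x, f)$ remains. To identify the last term I invoke (iv) with $p$ replaced by $\overline{p}$, obtaining $A_{\overline{p}}(p^k x,f) = \overline{p}^k A_{\overline{p}}(x,f)$; two applications of (i) then yield $A_p(p^k x,f) = \overline{p}^k A_p(x,f)$, completing the proof.

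The main obstacle is the power-associativity of the action of $\R[p] \subseteq \O$ on the left $\O$-module $M$: I repeatedly need the vanishing of associators $[p, p^k, \cdot]$ in both $M$ and $M'$, the unambiguous meaning of iterated products like $\overline{p}^{k+1} x = \overline{p}(\overline{p}^k x)$, and the corresponding identity $p \cdot (p^k y) = p^{k+1} y$ in $M'$. This is the module analog of Artin's theorem for alternative algebras; in the final write-up I would either cite a standard structural result on alternative modules, or insert a brief auxiliary lemma proved by induction on $k$ using identity \eqref{eq:[p,q,r]m+p[q,r,m]=[pq,r,m]-[p,qr,m]+[p,q,rm]}.
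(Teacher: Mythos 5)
Your proposal is correct and follows essentially the same route as the paper: (i) from $\R$-linearity in $p$ and $A_\alpha=0$ for real $\alpha$, (ii) and (iii) from identity \eqref{eq: associator 1 id} specialized to $q=p$ and $q=\overline{p}$ respectively, and (iv), (v) by induction using (iii) and (iv). The only difference is that you explicitly flag the power-associativity facts $[p,p^k,\cdot]=0$ and the well-definedness of $\overline{p}^{k}x$ needed in (iv) and (v), which the paper's terse proof passes over silently; your suggestion to justify them via identity \eqref{eq:[p,q,r]m+p[q,r,m]=[pq,r,m]-[p,qr,m]+[p,q,rm]} is the right fix.
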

\begin{proof}
	A\asertion{1} follows from    a\asertion{3} of Lemma \ref{lem: associator prop1 algm}.  A\asertion{2} follows from
	identity \eqref{eq: associator 1 id} with $q$ replaced by $p$.  A\asertion{3} follows also from  identity \eqref{eq: associator 1 id} with $q$ replaced by $\overline{p}$.
	
	A\asertion{4} can be proved by induction on $k$. A\asertion{5} follows by induction on $k$ and using a\asertion{4}.
\end{proof}

To show the $\O$-module structures of the set of \almost linear maps,
we shall define several kinds of  octonionic scalar multiplications on $\Hom_{\R}(M,M')$ according to the types of $\O$-modules $M$ and $M'$.  The notation  \eqref{eq:notation f(x)} will be used in the following proposition
for memory convenience.

\begin{prop}\label{thm:mod strc over general mod}
	Let $M,M'$ be two $\O$-modules, $f\in \Hom_{\R}(M,M')$ and  $r,p,q\in \O$.

	\begin{enumerate}
		\item If $M$ is left module and $M'$ is bimodule, we define
		\begin{eqnarray}\label{eqdef:<x,fr>}
		\left<x,f\odot r\right>:=\left<x,f\right>r-A_r(x,f).
		\end{eqnarray}
		Then  we have
		\begin{align}\label{eq:algmod A_p(x,fr)=[p,f(x),r]+A_p(x,f)r-A_r(px,f)+pA_r(x,f)}
		A_p(x,f\odot r)&=[p,f(x),r]+A_p(x,f)r-A_r(px,f)+pA_r(x,f),
		\\  [f,p,q](x)&=[f(x),p,q]-A_p(x,f)q-A_q(x,f\odot p)+A_{pq}(x,f).\label{eq:alm left bi[f,p,q](x)=[f(x),p,q]-Ap(x,f)q-Aq(x,fp)+A pq(x,f)}
		\end{align}

		\item If $M$ is bimodule and $M'$ is  left module,  we define
		\begin{eqnarray}\label{eqdef:<x,rf>}
		\left<x,r\odot f\right>:=\left<xr,f\right>+A_r(x,f).
		\end{eqnarray}
		Then  we have
		\begin{align}			A_p(x,r\odot f)&=f([p,x,r])+A_p(xr,f)+A_r(px,f)-pA_r(x,f),\label{eq:Ap(x,rf)bileft}
		\\ [p,q,f](x)&=A_{pq}(x,f)-A_q(xp,f)-A_p(x,q\odot f)-f([x,p,q]).\label{eq:alm bi left[pqf]x in }
		\end{align}
		\item If $M$ is right module and $M'$ is  bimodule, we define
		\begin{eqnarray}\label{eqdef:<rf,x>}
		\left<r\odot f,x\right>:=r\left<f,x\right>+B_r(f,x).
		\end{eqnarray}
		Then  we have
		\begin{align}			B_p(r\odot f,x)&=[r,f(x),p]+B_r(f,xp)+rB_p(f,x)-B_r(f,xp),\label{eq:Bp(rf,x)rightbi}
		\\ [p,q,f](x)&=B_{pq}(f,x)-pB_q(f,x)-B_p(q\odot f,x)+[p,q,f(x)].\label{eq:alm right bimod[pqf]x in }
		\end{align}
		\item If $M$ is  bimodule and $M'$ is  right module,  we define
		\begin{eqnarray}\label{eqdef:<fr,x>}
		\left<f\odot r,x\right>:=\left<f,rx\right>-B_r(f,x).	
		\end{eqnarray}
		Then  we have 	
		\begin{align}			B_p(f\odot r,x)&=f[r,x,p]-B_r(f,x)p+B_p(f,rx)+B_r(f,xp),\label{eq:Bp(fr,x)biright}
		\\ [f,p,q](x)&=B_{pq}(f,x)-B_q(f\odot p,x)-B_p(f,qx)+f([p,q,x]).\label{eq:alm bi right[pqf]x in  mod}
		\end{align}
		\item If $M$ and $M'$ are two bimodules, then  with the left or right  multiplication defined by \eqref{eqdef:<x,fr>} and \eqref{eqdef:<x,rf>} we have
		\begin{align}
		[q,f,p](x)
				&=A_q(x,f)p-A_p(x,q\odot f)+A_p(xq,f)-A_q(x,f\odot p)\label{eq:bi,bi [qfp]=Aq(xf)p-A_p(x,qf)+A_p(xq,f)-A_q(x,fp)}.
		\end{align}
		Similarly,  with   the left or right  multiplication defined by \eqref{eqdef:<rf,x>} and \eqref{eqdef:<fr,x>}, we have
		\begin{align}
		[q,f,p](x)&=qB_p(f,x)-B_p(q\odot f,x)+B_q(f,px)-B_q(f\odot p,x).
		\end{align}
	\end{enumerate}
	
\end{prop}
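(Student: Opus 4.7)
The plan is to derive all the identities in (i)--(v) by substituting the defining formulas for the newly introduced scalar multiplications and then repeatedly applying Lemma \ref{lem: associator prop1 algm}, in particular the associator identity \eqref{eq: associator 1 id} and its right analogue \eqref{eq:associator 2 id:Bpq(f,x)}. Each assertion consists of two kinds of identities: the first computes a second associator of the newly scaled map, and the second re-expresses a bimodule associator $[f,p,q]$, $[p,q,f]$ or $[q,f,p]$ as a combination of second associators.

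For the first identity of (i), I expand $A_p(x,f\odot r) = \fx{px}{f\odot r} - p\fx{x}{f\odot r}$ directly from the definition of $f\odot r$, obtaining $f(px)r - A_r(px,f) - p(f(x)r) + pA_r(x,f)$. Rewriting $f(px)r = (pf(x))r + A_p(x,f)r$ and recognizing $(pf(x))r - p(f(x)r) = [p,f(x),r]$ yields the right-hand side. The same template handles the first identities in (ii)--(iv); only the side on which the new scalar acts, and whether $A_p$ or $B_p$ is in play, change. For the algebra-associator identity in (i), I interpret $[f,p,q](x) = \fx{x}{(f\odot p)\odot q} - \fx{x}{f\odot(pq)}$ and apply the definition of $\odot$ twice. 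The outer expansion contributes $-A_q(x,f\odot p)$, the inner one $-A_p(x,f)q$, the constant-$f$ part contributes $-A_{pq}(x,f)$, and the purely module-theoretic piece is $(f(x)p)q - f(x)(pq) = [f(x),p,q]$. Collecting these four summands gives the claimed identity. Parts (ii)--(iv) run through the same two-step expansion with the corresponding $\odot$-definitions substituted, and with the right associator identity \eqref{eq:associator 2 id:Bpq(f,x)} invoked in place of \eqref{eq: associator 1 id} on the right-module side.

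Assertion (v) is a clean consequence of combining (i) with (ii) (respectively (iii) with (iv)). Writing $[q,f,p](x) = \fx{x}{(q\odot f)\odot p} - \fx{x}{q\odot(f\odot p)}$ and expanding the outer multiplication by (i) and the inner one by (ii), the two $f(xq)p$ terms cancel, leaving precisely the four second associators claimed in the identity; the right-sided version is parallel, using (iii) and (iv). The main obstacle is bookkeeping rather than genuine mathematical difficulty: each expansion of $\fx{x}{r\odot f}$ or $\fx{x}{f\odot r}$ introduces two or three summands, so intermediate lines carry six to ten terms and it is easy to drop a sign or swap an argument. I would mitigate this by carrying out (i) in full first and then reducing (ii)--(iv) to it via the symmetry interchanging left and right modules (and $A_p$ with $B_p$), so that only two genuinely distinct computations need to be done in detail.
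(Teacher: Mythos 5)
Your proposal is correct and takes essentially the same route as the paper: the paper likewise proves the first identity of (i) by expanding $A_p(x,f\odot r)=\fx{px}{f\odot r}-p\fx{x}{f\odot r}$ from the definition, rewriting $f(px)=pf(x)+A_p(x,f)$, and collecting $[p,f(x),r]$, then declares the remaining identities analogous and omits them. Your additional outlines for the algebra-associator identities and for (v) unfold exactly as you describe (note only that the $f\odot(pq)$ term contributes $+A_{pq}(x,f)$ to the final expression after the overall subtraction), and the appeal to Lemma \ref{lem: associator prop1 algm} is not actually needed beyond the definition of the second associators.
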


 \begin{proof} The results follows   by   straightforward calculations. We prove a\asertion{1}  for example.  By definition, we have
	\begin{align*}
	A_p(x,f\odot r)&=\left<px,f\odot r\right>-p\fx{x}{f\odot r}\\
	&=\fx{px}{f}r-A_r(px,f)-p\left(f(x)r-A_r(x,f)\right)\\
	&=\left(pf(x)+A_p(x,f)\right)r-A_r(px,f)-p\left(f(x)r-A_r(x,f)\right)\\
	&=[p,f(x),r]+A_p(x,f)r-A_r(px,f)+pA_r(x,f).
	\end{align*}
	The rest proof  runs in the same manner and is omitted.
\end{proof}

\begin{rem}		
	Although we have defined the  octonionic scalar multiplication on $\Hom_{\R}(M,M')$, the set $\Hom_{\R}(M,M')$ is   not an  $\O$-module in general.   Indeed, by definition  \eqref{eqdef:<x,fr>},
	it may  hold
	$$f\odot(r^2)\neq(f\odot r)\odot r$$
	for some $r\in \O$ and  $f\in \Hom_{\R}(M,M')$.

	The identities in {Proposition} \ref{thm:mod strc over general mod} are all akin to  identity \eqref{eq:[p,q,r]m+p[q,r,m]=[pq,r,m]-[p,qr,m]+[p,q,rm]}.
	{Those identities about associators together  with the real part structure of modules   play a crucial role in the sequel. }
\end{rem}

\subsection{$\O$-\almost linear map}\label{subsec:almost linear}
In this subsection, we shall generalize the notion of \almost linear functions developed in our previous work \cite{huoqinghai2021Riesz} to arbitrary \almost linear maps between $\O$-modules.

\textbf{Convention: \quad  The target module is always assumed to be an $\O$-bimodule from now on, except where otherwise specified.}

\begin{mydef}\label{def:\almost linear}
	Let $M$ be a  left $\spo$-module and $M'$ an $\O$-bimodule. A map $f\in \Hom_\spr(M,M')$ is called \textbf{(left)  $\spo$-\almost linear} if
	\begin{eqnarray}\label{eq:re Ap(xf)=0}
	\re A_p(x,f)=0
	\end{eqnarray}
	%$$\re A_p(x,f)=0		$$
	for all $p\in \O$ and  $x\in M$.		
\end{mydef}

Similarly,  we can also define the notion of  right  $\spo$-\almost linear map with the  left \ass\ $A_p(x,f)$ replaced by the right associator $B_p(f,x)$.

We denote by $$\Hom_{\mathcal{LO}}(M,M')$$ the set of (left)  $\spo$-\almost linear maps and denote   $$\End_{\mathcal{LO}}(M):=\Hom_{\mathcal{LO}}(M,M).$$  In order to avoid any confusion, we use systematically the subscripts $\mathcal{LO}$ and  $\mathcal{RO}$   to indicate that the module under consideration is a left or a right module. But we may omit the subscripts when the module under consideration is understood well.

For  $f\in \Hom_\mathcal{LO}(M,M')$,	we shall only use the notation $\fx{x}{f}$ to  represent  the value $f(x)$.  Similarly, only the notation  $\fx{f}{x}$  is used for $f\in\Hom_\mathcal{RO}(M,M')$. 	We shall state our results for  left  $\spo$-modules, while   the similar results for  right $\spo$-modules  shall be collected  in   subsection \ref{sec:right O mod}.

Let $M$ be a left $\O$-module and $M'$ an $\O$-bimodule.
For   simplicity, we  denote  $$f_{\R}(x)=\re f(x)$$ 	
for any $x\in M$ and  $f\in \Hom_{\R}(M,M')$.
In view of  \eqref{eq:M=sum ei ReM},
$f$ can be written as   $$f(x)=f_{\R}(x)+\sum_{i=1}^7
e_if_i(x),$$
where  $f_{\R}(x)\in \re M'$ and  $f_i(x)\in \re{M'}\text{ for any } i=1,\dots,7$.

We now  give a characterization of    $\spo$-\almost linear maps in terms of the real part operators. The proof runs in completely the same manner as in our previous work \cite%[Theorem ]
{huoqinghai2021Riesz}.
\begin{thm}\label{thm: f in Hom(M,M')  equivalent:}
	%Let $M'$ be \kelie.
	Let $M$ be a left $\O$-module and $M'$ an $\O$-bimodule.  Then the following are equivalent:
	\begin{enumerate}
		\item  $f\in \Hom_\mathcal{LO}(M,M')$;
		\item $f_i(x)=f_{\R}(\overline{e_i}x)$ for any $i=0,\dots ,7$;
		\item $A_p(x,f)=\sum_{i=1}^7
		e_i f_{\R}([e_i,p,x])$.
	\end{enumerate}
	
\end{thm}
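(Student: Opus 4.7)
The plan is to establish (i)$\Leftrightarrow$(ii) directly by expanding $f$ into its real components, and then to handle (i)$\Leftrightarrow$(iii) using the polarization identity from Theorem~\ref{lem:real part on good bimod }(iii). Throughout, the main tools are the explicit formula \eqref{eq:def of real part} for the real part operator together with the identity $\re[p,q,m]=0$ from \eqref{prop:re part}.

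For the first equivalence, since $f$ is $\R$-linear the quantity $\re A_p(x,f)$ is $\R$-linear in $p$, so (i) reduces to checking $\re A_{e_j}(x,f)=0$ for $j=1,\dots,7$. Writing $f(x)=f_\R(x)+\sum_{i=1}^7 e_i f_i(x)$ with each $f_i(x)$ in $\re M'$, formula \eqref{eq:real part on good bimod } gives $\re(e_j f_\R(x))=(\re e_j)f_\R(x)=0$ and $\re(e_j\, e_i f_i(x))=\re(e_j e_i)f_i(x)=-\delta_{ij}f_i(x)$, so that $\re(e_j f(x))=-f_j(x)$. Consequently $\re A_{e_j}(x,f)=f_\R(e_j x)+f_j(x)$, whose vanishing is precisely the condition $f_j(x)=-f_\R(e_j x)=f_\R(\overline{e_j}x)$; the case $j=0$ is trivial since $e_0=1$. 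This yields (i)$\Leftrightarrow$(ii).

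For (iii)$\Rightarrow$(i), taking real parts of the formula in (iii) and using $\re(e_i m)=0$ whenever $i\geq 1$ and $m\in\re M'$ gives $\re A_p(x,f)=0$ immediately. For the harder direction (i)$\Rightarrow$(iii), I would apply the polarization identity of Theorem~\ref{lem:real part on good bimod }(iii) to $y=A_p(x,f)$; the $i=0$ summand vanishes by (i), so the task reduces to showing $\re(\overline{e_i}A_p(x,f))=f_\R([e_i,p,x])$ for $i=1,\dots,7$. Expanding $\overline{e_i}(f(px)-pf(x))$, using $\re[\overline{e_i},p,f(x)]=0$ to shift parentheses, and substituting $\re(qf(y))=f_\R(qy)$ (a direct consequence of (i)) for $q=\overline{e_i}$ and $q=\overline{e_i}p$, one rewrites both sides as $f_\R$ applied to expressions of the form $(e_ip)x$ and $e_i(px)$, whose difference is $[e_i,p,x]$ up to the sign coming from $\overline{e_i}=-e_i$.

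The main obstacle will be the careful handling of non-associativity in the calculation of (i)$\Rightarrow$(iii): products such as $\overline{e_i}(pf(x))$ and $(\overline{e_i}p)f(x)$ differ by a nontrivial associator, and only upon taking $\re$ do these become interchangeable. The strategy is to confine every parenthesis swap to an associator of the form $[p,q,m]$, which then lands in the kernel of $\re$, so that a clean identity survives at the end. The bookkeeping is reminiscent of the functional case treated in \cite{huoqinghai2021Riesz}, and the generalization to bimodule-valued maps goes through essentially unchanged once the real part operator is in hand.
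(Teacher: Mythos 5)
Your proposal is correct, but it organizes the equivalences differently from the paper and replaces the paper's main computation with a cleaner one. The paper proves the cycle (i)$\Rightarrow$(ii)$\Rightarrow$(iii)$\Rightarrow$(i), and its heavy lifting is the step (ii)$\Rightarrow$(iii): a coordinate expansion of $A_{e_i}(x,f)$ in which the multiplication rules $e_ie_j=\epsilon_{ijk}e_k-\delta_{ij}$ are substituted and the unwanted terms cancel by the alternating property of $\epsilon_{ijk}$. You instead prove (i)$\Leftrightarrow$(ii) in one stroke from the formula $\re A_{e_j}(x,f)=f_\R(e_jx)+f_j(x)$, and for (i)$\Rightarrow$(iii) you apply the polarization identity of Theorem~\ref{lem:real part on good bimod }(iii) to $y=A_p(x,f)$, reducing everything to the single identity $\re\big(\overline{e_i}A_p(x,f)\big)=f_\R([e_i,p,x])$, which follows from $\re(qf(y))=f_\R(qy)$ together with $\re[q,p,f(x)]=0$; I checked that this computation closes (the parenthesis shift $\overline{e_i}(pf(x))\mapsto(\overline{e_i}p)f(x)$ is exactly an associator killed by $\re$, and the signs from $\overline{e_i}=-e_i$ work out). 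Your route is basis-free in $p$, avoids the $\epsilon$-symbol bookkeeping entirely, and isolates an identity that the paper only establishes later (it is essentially \eqref{eq:Omod left bi Re(f[pqx])=Re(Ap(x,f)q)} of Proposition~\ref{prop: left,bimod associator } in disguise), so you are in effect front-loading that lemma; the paper's route, by contrast, derives (iii) purely from the component condition (ii) without re-invoking para-linearity. Both are valid; yours is arguably the more conceptual argument.
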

\begin{proof}
	We first prove that   a\asertion{1} implies a\asertion{2}.
	%	We prove \infer{1}{2}.
	By definition, we have
	\begin{align*}
	f_\R(\overline{e_i}x)&=\re f(\overline{e_i}x)\\
	&=\re (\overline{e_i}f(x))\\
	&=\sum_{j=0}^7 \re \left(\overline{e_i} e_jf_j(x)\right).
	\end{align*}
	According to \eqref{eq:real part on good bimod }, we have
	\begin{align*}
	f_\R(\overline{e_i}x)&=\sum_{j=0}^7 \re \left(\overline{e_i}e_j\right)f_j(x)\\
	&=f_i(x).
	\end{align*}			
	This proves that a\asertion{1} implies a\asertion{2}.
 	We next come to prove that a\asertion{2} implies a\asertion{3}.
	\bfs\ $p=e_i$.
	From above computation and a\asertion{2}, we get
	\begin{align*}
	A_{e_i}(x,f)&=f(e_ix)-e_if(x)\\
	&=f_{\R}(e_ix)+\sum_{j=1}^7 e_jf_j(e_ix)-\left(-f_i(x)+e_if_{\R}(x)+\sum_{j,k=1}^7 \epsilon_{ijk}e_kf_j(x)\right)\\
	&=f_{\R}(e_ix)+f_i(x)-\sum_{j=1}^7 e_jf_{\R}(e_j(e_ix))-e_if_{\R}(x)-\sum_{j,k=1}^7 \epsilon_{ijk}e_kf_j(x)\\
	&=\left(-\sum_{j=1}^7 e_jf_{\R}\left( (e_je_i)x-[e_j,e_i,x] \right)\right)-e_if_{\R}(x)+\sum_{j,k=1}^7 \epsilon_{ijk}e_kf_{\R}(e_jx)\\
	&=\Big(-\sum_{j,k=1}^7 e_jf_{\R}\left( (\epsilon_{jik}e_k-\delta_{ij})x \right)\Big)+\sum_{j=1}^7 e_jf_{\R} ([e_j,e_i,x] )-e_if_{\R}(x)+\sum_{j,k=1}^7 \epsilon_{ijk}e_kf_{\R}(e_jx)\\
	&=\sum_{j=1}^7 e_jf_{\R} ([e_j,e_i,x] ).
	\end{align*} 			We   used the fact that $\epsilon_{ijk}$ is alternative in the last line.

	Finally we prove that  a\asertion{3} implies a\asertion{1}. This  is trivial in view of  Theorem \ref{lem:real part on good bimod }.  %The proof is complete.			
\end{proof}

As an  important consequence of
Theorem \ref{thm: f in Hom(M,M')  equivalent:}, we find that  the second associator  vanishes for associative elements and  an $\spo$-\almost linear map $f$ is  uniquely determined by $f_\R$.

\begin{cor}\label{cor: A_p(x,f)=0 and f(px)=pf(x)}
	Assume that  $f\in \Hom_\mathcal{LO}(M,M')$.
	\begin{enumerate}
		\item For each \asselm\ $x\in \huaa{M}$, we have $$A_p(x,f)=0$$ and hence $f(px)=pf(x)$ for all $p\in \O$.
		\item  $f_{\R}=0\iff f= 0$.
		
	\end{enumerate}
	
\end{cor}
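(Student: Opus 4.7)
The plan is to derive both assertions directly from Theorem \ref{thm: f in Hom(M,M')  equivalent:}, which gives the characterization we need with essentially no further calculation.

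For assertion (i), I would start from the equivalent formulation a\asertion{3} of that theorem, namely
\[
A_p(x,f)=\sum_{i=1}^7 e_i\,f_{\R}\big([e_i,p,x]\big).
\]
When $x\in\huaa{M}$, the definition of the nucleus forces $[e_i,p,x]=0$ for every $i$ and every $p\in\O$, so every summand vanishes. Thus $A_p(x,f)=0$, and unfolding the definition $A_p(x,f)=f(px)-pf(x)$ gives $f(px)=pf(x)$.

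For assertion (ii), the nontrivial direction is $f_\R=0\Rightarrow f=0$. Here I would invoke a\asertion{2} of the same theorem: for each $i=0,\dots,7$, we have $f_i(x)=f_\R(\overline{e_i}x)$. If $f_\R\equiv 0$, then every $f_i$ vanishes identically, and the decomposition $f(x)=f_\R(x)+\sum_{i=1}^7 e_if_i(x)$ (guaranteed by the direct sum decomposition \eqref{eq:M=sum ei ReM} applied in $M'$) shows $f(x)=0$ for all $x\in M$. The reverse implication is immediate since $f_\R=\re\circ f$.

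I don't anticipate any obstacle: both items are a one-line reading of the characterization theorem together with the nucleus condition in (i) and with the direct sum decomposition of $M'$ in (ii). The only thing to be careful about is that the decomposition $f(x)=f_\R(x)+\sum_{i=1}^7 e_if_i(x)$ is well-defined as a unique expression precisely because $M'=\bigoplus_{i=0}^7 e_i\huaa{M'}$, so the components $f_i$ are genuinely recovered by the real part of $\overline{e_i}f(x)$ via Theorem \ref{lem:real part on good bimod }(iii).
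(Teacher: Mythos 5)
Your argument is correct and is precisely the derivation the paper intends: the corollary is stated as an immediate consequence of Theorem \ref{thm: f in Hom(M,M')  equivalent:}, with (i) following from assertion (iii) of that theorem together with the vanishing of $[e_i,p,x]$ for $x$ in the nucleus, and (ii) following from assertion (ii) together with the decomposition $M'=\bigoplus_{i=0}^7 e_i\huaa{M'}$. Nothing is missing.
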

 \begin{cor}\label{lem: f(huaa M)=0 yields f=0}
	Let $M$  be an $\spo$-bimodule and $f\in  \Hom_{\mathcal{LO}}(M,M') $.
	\begin{enumerate}
		\item  If $f\lvert_{\re{M}}=0$, then $ f=0$;
		\item  $f\in  \Hom_{\spo}(M,M')$ if and only  if $ f\lvert_{\re{M}}\in \Hom_{\spr}(\re{M},\re{M'})$.
		\end{enumerate}
\end{cor}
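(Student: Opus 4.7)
The plan is to deduce both parts from the decomposition $M=\bigoplus_{i=0}^7 e_i\huaa{M}$ of equation \eqref{eq:M=sum ei ReM} together with Corollary \ref{cor: A_p(x,f)=0 and f(px)=pf(x)}, which tells us that $f(px)=pf(x)$ whenever $x$ is an \asselm. These two ingredients reduce every question about $f$ to its restriction to $\re M=\huaa{M}$.

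For a\asertion{1}, I would take an arbitrary $y\in M$ and write $y=\sum_{i=0}^7 e_i y_i$ with $y_i\in\huaa{M}$, as allowed by \eqref{eq:M=sum ei ReM}. Applying Corollary \ref{cor: A_p(x,f)=0 and f(px)=pf(x)} to each \asselm\ $y_i$, one obtains $f(e_iy_i)=e_if(y_i)$, so that
\begin{equation*}
f(y)=\sum_{i=0}^7 e_if(y_i).
\end{equation*}
The hypothesis $f|_{\re M}=0$ forces every $f(y_i)$ to vanish, hence $f(y)=0$.

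For a\asertion{2}, the forward implication is immediate: if $f\in\Hom_\O(M,M')$, then Lemma \ref{lem:f re=re f} gives $f(\re M)\subseteq \re M'$, and the $\R$-linearity of $f|_{\re M}$ is inherited from that of $f$. For the converse I would invoke the characterization \eqref{eq:HomO(M,M')=l-Hom(M,M')}, so that it is enough to check left $\O$-linearity. Decomposing $x=\sum_i e_i x_i$ with $x_i\in\huaa{M}$ and fixing $p\in\O$, the associativity of each $x_i$ yields $p(e_ix_i)=(pe_i)x_i$, and Corollary \ref{cor: A_p(x,f)=0 and f(px)=pf(x)} applied twice produces $f((pe_i)x_i)=(pe_i)f(x_i)$ as well as $f(e_ix_i)=e_if(x_i)$. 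Combining these gives
\begin{equation*}
f(px)-pf(x)=\sum_{i=0}^7\bigl((pe_i)f(x_i)-p(e_if(x_i))\bigr)=\sum_{i=0}^7 [p,e_i,f(x_i)].
\end{equation*}
Under the hypothesis that $f|_{\re M}$ takes values in $\re M'=\huaa{M'}$, each $f(x_i)$ is associative, so every bracket on the right vanishes and we conclude $f(px)=pf(x)$.

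The whole argument is conceptually light; the only bookkeeping to watch is in the backward direction of a\asertion{2}, where we must simultaneously use $x_i\in\huaa{M}$ (to commute $p$ past $e_i$ \emph{inside} $f$, via Corollary \ref{cor: A_p(x,f)=0 and f(px)=pf(x)}) and $f(x_i)\in\huaa{M'}$ (to commute $p$ past $e_i$ \emph{outside} $f$). This is the one place where the bimodule hypothesis on $M$ and the value-in-$\re M'$ hypothesis are both genuinely needed.
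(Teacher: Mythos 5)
Your proposal is correct and follows essentially the same route as the paper: both parts are reduced to the decomposition $M=\bigoplus_i e_i\huaa{M}$ together with Corollary \ref{cor: A_p(x,f)=0 and f(px)=pf(x)}, and the converse of a\asertion{2} is proved by exactly the same computation, moving $p$ past $e_i$ inside $f$ via associativity of $x_i$ and outside $f$ via associativity of $f(x_i)$. No gaps.
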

\begin{proof}
	We first prove a\asertion{1}. Since $M$ is an $\O$-bimodule,  each $x\in M$   can be  expressed as $$x=x_0+\sum_{i=1}^7 e_ix_i,$$
	where $x_j\in\re M$  for any $j=0,\dots,7.$
	According to  a\asertion{1} of Corollary \ref{cor: A_p(x,f)=0 and f(px)=pf(x)}, we obtain $$f(x)=f(x_0)+\sum_{i=1}^7 e_i f(x_i),$$ so that the condition  $f\lvert_{\re{M}}=0$ will yield $f=0$.
	
	We come to prove a\asertion{2}.
	If $f\in  \Hom_{\spo}(M,M')$,  it follows from   Lemma \ref{lem:f re=re f} that $f(\re M)\subseteq\re M'.$
	
	Conversely,  if $f(\re M)\subseteq\re M'$, then  by a\asertion{1} of Corollary \ref{cor: A_p(x,f)=0 and f(px)=pf(x)} again, for all $p\in \spo$ we have	
	\begin{align*}
	f(px)&\xlongequal{}f(px_0)+\sum_{i=1}^7 f((pe_i)x_i)\\
	&\xlongequal{}pf(x_0)+\sum_{i=1}^7 (pe_i)f(x_i)\\
	&\xlongequal{}pf(x_0)+\sum_{i=1}^7 p(e_if(x_i))\\
	&\xlongequal{}pf(x).
	\end{align*}
	
\end{proof}

We now introduce two very useful bijections.
 \begin{lemma}
		If $M$ is a left $\O$-module and $M'$
		an $\O$-bimodule.
		Then we have a bijection of (left) \textbf{\almost linear lift} map:
		\begin{eqnarray}\label{eq:lif}
		l\text{-}\lif:\qquad\Hom_{\R}(M,\re M')&\to&\Hom_\mathcal{LO}(M,M')\notag\\
		g&\mapsto& ( l\text{-}\lif g)(x):=g(x)-\sum_{i=1}^7e_ig(e_ix).
		\end{eqnarray}
Its inverse map is given by
\begin{eqnarray}\label{def:lif-301}
		\re_*:\qquad\Hom_\mathcal{LO}(M,M')&\to&\Hom_{\R}(M,\re M')\notag\\
		f&\mapsto& \re \circ f=f_\R.
		\end{eqnarray}

\end{lemma}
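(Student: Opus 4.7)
The plan is to establish bijectivity by verifying two things: first, that $l\text{-}\lif$ actually lands in $\Hom_{\mathcal{LO}}(M,M')$; second, that $\re_*$ as defined is a two-sided inverse. Real-linearity of $l\text{-}\lif g$ is automatic from real-linearity of $g$ and of left multiplication by $e_i$, so the first step reduces entirely to checking the para-linearity condition.

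To check para-linearity of $f:=l\text{-}\lif g$ I would appeal to Theorem \ref{thm: f in Hom(M,M')  equivalent:}, specifically the equivalence (i)$\Leftrightarrow$(ii): it suffices to compute the components $f_i(x)\in\re M'$ in the canonical decomposition $f(x)=f_\R(x)+\sum_{i=1}^7 e_i f_i(x)$ coming from $M'=\bigoplus_{i=0}^7 e_i\re M'$ and to verify $f_i(x)=f_\R(\overline{e_i}x)$. Rewriting the definition as $f(x)=g(x)+\sum_{i=1}^7 e_i g(\overline{e_i}x)$ (using $\overline{e_i}=-e_i$ for $i\geq 1$), I can read off the components directly: since each $g(\overline{e_i}x)$ lies in $\re M'$, the identity $\re(pr)=(\re p)r$ from Theorem \ref{lem:real part on good bimod } forces $\re(e_i g(\overline{e_i}x))=0$ for $i\geq 1$, so $f_\R(x)=g(x)$ and $f_i(x)=g(\overline{e_i}x)$. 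The required relation $f_i(x)=f_\R(\overline{e_i}x)$ is then tautological, and hence $f\in\Hom_{\mathcal{LO}}(M,M')$.

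The two composites then drop out immediately. First, $(\re_*\circ l\text{-}\lif)(g)=f_\R=g$ by the computation above. Conversely, for $f\in\Hom_{\mathcal{LO}}(M,M')$, combining assertion (ii) of Theorem \ref{thm: f in Hom(M,M')  equivalent:} with the reconstruction formula $y=\sum_{i=0}^7 e_i\re(\overline{e_i}y)$ from assertion (iii) of Theorem \ref{lem:real part on good bimod } applied to $y=f(x)$ yields
$$f(x)=f_\R(x)+\sum_{i=1}^7 e_i f_\R(\overline{e_i}x)=f_\R(x)-\sum_{i=1}^7 e_i f_\R(e_i x)=(l\text{-}\lif f_\R)(x),$$
so $l\text{-}\lif\circ\re_*=\mathrm{id}$.

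I do not foresee any genuine obstacle here: the argument is a direct verification whose only bookkeeping is the systematic use of the decomposition $M'=\bigoplus_{i=0}^7 e_i\re M'$. The single subtlety to keep in mind is that when applying $\re$ to products $e_i r$ with $r\in\re M'$, one must invoke the bimodule identity $\re(pr)=(\re p)r$ rather than any naive $\O$-linearity of $\re$, which would fail in general.
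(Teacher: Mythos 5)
Your argument is correct and follows essentially the same route as the paper: well-definedness of $l\text{-}\lif$ is obtained by invoking the equivalence in Theorem \ref{thm: f in Hom(M,M')  equivalent:} (you make explicit the component computation $f_\R(x)=g(x)$, $f_i(x)=g(\overline{e_i}x)$ via the decomposition $M'=\bigoplus_{i=0}^7 e_i\re M'$, which the paper leaves implicit), and the two composite identities are then the same direct verification the paper labels as such. No gaps.
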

\begin{proof}
		We first show that the $l\text{-}\lif$ map is well-defined.
		In view of Theorem \ref{thm: f in Hom(M,M')  equivalent:}, $l\text{-}\lif(g)$ is a left \almost linear map so that $l\text{-}\lif(g)\in \Hom_\mathcal{LO}(M,M')$.
		It is direct to check that $$\re_*\circ l\text{-}\lif=\text{id},\quad l\text{-}\lif\circ\re_* =\text{id}.$$	
\end{proof}

\begin{lemma}\label{lem:ext}	If $M$ and  $M'$ are two  $\O$-bimodules.
	Then we  have a bijection of (left) \textbf{\almost linear extension}:
	\begin{eqnarray}\label{eq:ext-302}
		l\text{-}\ext:\qquad\Hom_{\R}(\re M, M')&\to&\Hom_\mathcal{LO}(M,M')\notag \\
		g&\mapsto& l\text{-}\ext g,
	\end{eqnarray}
	which  the $l\text{-}\ext$ map   defined by
	\begin{eqnarray}\label{eq:ext-303}(l\text{-}\ext g)(px):=p(g(x))
\end{eqnarray}
	for all $p\in\O$ and all  $x\in \re M.$	
Its inverse map is given by 		\begin{eqnarray}
	\re^*:\qquad\Hom_\mathcal{LO}(M,M')&\to&\Hom_{\R}(\re M, M')\notag\\
	f&\mapsto& f\circ\re =f|_{\re M}.
	\end{eqnarray}
\end{lemma}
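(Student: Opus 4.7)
The plan is to exploit the direct-sum decomposition \eqref{eq:M=sum ei ReM} available for the $\O$-bimodule $M$ together with the identity $\re M=\huaa M$ from \eqref{eq:reM=huaaM=ZM}. By \eqref{eq:M=sum ei ReM} every $y\in M$ admits a unique expression $y=\sum_{i=0}^{7}e_ix_i$ with $x_i\in\re M$, so I would define $(l\text{-}\ext g)\bigl(\sum_i e_ix_i\bigr):=\sum_i e_ig(x_i)$. This yields a well-defined $\R$-linear map, the $\R$-linearity being inherited from $g$. It agrees with the prescription in the statement: for $p=\sum_i a_ie_i\in\O$ with $a_i\in\R$ and $x\in\re M$ one has $px=\sum_i a_i(e_ix)$, hence $(l\text{-}\ext g)(px)=\sum_i a_ie_ig(x)=pg(x)$.

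Next I would verify $l\text{-}\ext g\in\Hom_\mathcal{LO}(M,M')$. Since $\re M=\huaa M$, for any $x\in\re M$, any $p\in\O$, and any basis vector $e_i$ the associator $[p,e_i,x]$ vanishes, so $p(e_ix)=(pe_i)x$. Consequently
\begin{equation*}
A_p(e_ix,l\text{-}\ext g)=(l\text{-}\ext g)\bigl((pe_i)x\bigr)-p\bigl(e_ig(x)\bigr)=(pe_i)g(x)-p\bigl(e_ig(x)\bigr)=[p,e_i,g(x)],
\end{equation*}
whose real part vanishes by \eqref{prop:re part}. Expanding an arbitrary $y=\sum_i e_ix_i$ and using the $\R$-linearity of $A_p(\cdot,l\text{-}\ext g)$ in its first argument, I conclude $\re A_p(y,l\text{-}\ext g)=0$ for all $p\in\O$ and $y\in M$, so $l\text{-}\ext g$ is \almost linear in the sense of Definition \ref{def:\almost linear}.

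Finally I would check the mutual inverse relations. Setting $p=1$ gives $(l\text{-}\ext g)|_{\re M}=g$, hence $\re^*\circ l\text{-}\ext=\mathrm{id}$. Conversely, for $f\in\Hom_\mathcal{LO}(M,M')$ and $y=\sum_i e_ix_i\in M$, Corollary \ref{cor: A_p(x,f)=0 and f(px)=pf(x)}(i) applied to each associative element $x_i$ yields $f(e_ix_i)=e_if(x_i)$, so
\begin{equation*}
\bigl(l\text{-}\ext(f|_{\re M})\bigr)(y)=\sum_{i=0}^{7}e_if(x_i)=\sum_{i=0}^{7}f(e_ix_i)=f(y),
\end{equation*}
yielding $l\text{-}\ext\circ\re^*=\mathrm{id}$. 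The only nontrivial step is the observation that $A_p(e_ix,l\text{-}\ext g)$ collapses to a pure associator $[p,e_i,g(x)]$ in $M'$, which is automatically purely imaginary; everything else is bookkeeping with the direct-sum decomposition.
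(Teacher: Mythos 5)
Your proposal is correct and follows essentially the same route as the paper: the paper likewise grounds well-definedness in the decomposition $M=\bigoplus_i e_i\re M$ and reduces para-linearity to the computation $A_p(x,l\text{-}\ext g)=\sum_i[p,e_i,g(x_i)]$, whose real part vanishes by \eqref{prop:re part}. You merely spell out two steps the paper leaves to the reader, namely the agreement of the componentwise definition with \eqref{eq:ext-303} and the verification of the mutual inverse relations via Corollary \ref{cor: A_p(x,f)=0 and f(px)=pf(x)}.
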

\begin{proof} Based on  Theorem \ref{lem:real part on good bimod },
the \almost linear extension is uniquely defined via
(\ref{eq:ext-303}).

We need to  prove that $l\text{-}\ext g\in \Hom_\mathcal{LO}(M,M')$ for any $g\in \Hom_{\spr}(\re M,M')$. Let $p\in \O$  and
	$$x=x_0+\sum_{i=1}^7e_ix_i\in M.$$
	Then by definition, we have
	\begin{align*}
	A_p(x,l\text{-}\ext g)&=(l\text{-}\ext g)(px)-p(l\text{-}\ext g)(x)\\
	&=(l\text{-}\ext g)(px_0+\sum_{i=1}^7(pe_i)x_i)-p(l\text{-}\ext g)(x_0+\sum_{i=1}^7e_ix_i)\\
	&=pg(x_0)+\sum_{i=1}^7(pe_i)g(x_i)-pg(x_0)-\sum_{i=1}^7p(e_ig(x_i))\\
	&=\sum_{i=1}^7[p,e_i,g(x_i)].
	\end{align*}
	It follows that $\re A_p(x,l\text{-}\ext g)=0$ and thus $l\text{-}\ext g\in \Hom_\mathcal{LO}(M,M')$.
	
By definition, one can check that $$\re^*\circ l\text{-}\ext=\text{id},\quad l\text{-}\ext\circ\re^* =\text{id}.$$	
This completes the proof. \end{proof}

	We shall denote $$\lif = l\text{-}\lif, \quad \ext=l\text{-}\ext$$ for simple if no confusion arises.

\subsection{Second associator of \almost linear maps}
In subsection \ref{sec:second ass}, we have studied the second associator of
real linear maps over $\O$-modules.
In this subsection, we focus on 	the properties of the second associator related to  $\spo$-\almost linear maps.
Some properties have appeared in \cite{huoqinghai2021Riesz} for $\spo$-\almost linear functions.

\begin{prop}\label{prop: left,bimod associator }
	Let $M$ be a  left $\spo$-module and $M'$ an $\spo$-bimodule. If  $f\in \Hom_\mathcal{LO}(M,M')$, then for any   $p,q\in \spo$ and $x\in M$ we have
	\begin{align}
	pA_p(x,f)&=A_p(x,f)\overline{p},\label{eq:Omod left bi pAp(xf)=Ap(xf)p}\\
	\re\left(A_p(x,f)q\right)&=-\re\left(A_q(x,f)p\right).\label{eq:Omod left bi Re(Ap(xf)q=-Re(Aq(xf)p))}
	\end{align}	
	
\end{prop}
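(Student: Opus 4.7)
The strategy is to invoke the formula
$$ A_p(x,f) = \sum_{i=1}^7 e_i\, f_\R([e_i,p,x]) $$
from a\asertion{3} of Theorem \ref{thm: f in Hom(M,M')  equivalent:}, together with the identification $\re M' = \huaa{M'} = \hua{Z}{}{M'}$ recorded in \eqref{eq:reM=huaaM=ZM}. Because each value $f_\R([e_i,p,x])$ lies in the nucleus and in the commutative center of $M'$, any octonion factor multiplying it (from either side) may be freely regrouped and even moved across. This observation turns both identities into assertions about the octonions themselves, applied coefficient by coefficient.

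For the first identity, I would rewrite both $pA_p(x,f)$ and $A_p(x,f)\overline{p}$ as sums of the form $\sum_i c_i\, f_\R([e_i,p,x])$ with $c_i \in \O$. A short component computation in $\O$, using $e_ie_j + e_je_i = -2\delta_{ij}$, yields $pe_i - e_i\overline{p} = -2 p_i$, where $p_i$ is the $e_i$-coefficient of $p$. Summing and using the $\R$-linearity of $f_\R$, the difference $pA_p(x,f) - A_p(x,f)\overline{p}$ collapses to $-2 f_\R\bigl([\im p,\, p,\, x]\bigr)$. The bracket vanishes: $[\im p,\re p, x] = 0$ trivially, since $\re p \in \R$, and $[\im p,\im p,x] = 0$ by the left alternative law \eqref{eq:left ass}.

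For the second identity, the same rearrangement gives
$$ A_p(x,f)\, q = \sum_{i=1}^7 (e_i q)\, f_\R([e_i,p,x]), $$
and the octonionic identity $\re(e_i q) = -q_i$ immediately yields $\re\bigl(A_p(x,f) q\bigr) = -f_\R([\im q,p,x])$. Symmetrically, $\re\bigl(A_q(x,f) p\bigr) = -f_\R([\im p,q,x])$. To compare these, I would absorb the real parts into associators that vanish, reducing to $[\im q,\im p,x]$ and $[\im p,\im q,x]$, and then apply the alternating property of the left associator: $[a,b,x] = -[b,a,x]$. This gives $f_\R([\im q,p,x]) = -f_\R([\im p,q,x])$, which is precisely \eqref{eq:Omod left bi Re(Ap(xf)q=-Re(Aq(xf)p))}.

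The real content of the argument is not the octonion bookkeeping but the nucleus identification \eqref{eq:reM=huaaM=ZM}, which makes the bracketing gymnastics above legitimate. This is also where the bimodule hypothesis on $M'$ is genuinely used: without $\re M'$ coinciding with the full nucleus of $M'$, one could not move $p$, $q$, or $\overline{p}$ across the factor $f_\R([e_i,p,x])$. Once this is granted, both identities reduce to alternativity on $\O$ and the alternating nature of the associator in a left $\O$-module.
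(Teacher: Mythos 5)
Your proposal is correct and follows essentially the same route as the paper: both arguments expand $A_p(x,f)=\sum_i e_i f_\R([e_i,p,x])$ via Theorem \ref{thm: f in Hom(M,M')  equivalent:}, use the identification $\re M'=\huaa{M'}=\hua{Z}{}{M'}$ to regroup octonion factors, and reduce each identity to the alternating property of the left associator. The only differences are organizational (you keep $p,q$ general where the paper reduces to pure imaginary $p$ and basis elements $q=e_i$, and your computation $\re(A_p(x,f)q)=-f_\R([\im q,p,x])$ is just the paper's intermediate identity \eqref{eq:Omod left bi Re(f[pqx])=Re(Ap(x,f)q)} in a different guise).
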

\begin{proof}
	As usual,	we write  $$f(x)=f_{\R}(x)+\sum_{i=1}^7 e_if_i(x).$$

	We first prove
	\eqref{eq:Omod left bi pAp(xf)=Ap(xf)p}. \bfs\ $p\in \pureim{\spo}$. Let $p=\sum_{i=1}^7 p_ie_i$ with $ p_i\in \spr$. Then by Theorem \ref{thm: f in Hom(M,M')  equivalent:}, we have
	$$pA_p(x,f)=\sum_{i,j=1}^7 p_ie_iA_{p_je_j}(x,f)=\sum_{i,j,k=1}^7 p_ip_j e_i e_kf_{\R}([e_k,e_j,x]),$$
	and
	$$A_p(x,f)\overline{p}=\sum_{i,j=1}^7 A_{p_je_j}(x,f)\overline{(p_ie_i)}=-\sum_{i,j,k=1}^7 p_ip_je_ke_if_{\R}([e_k,e_j,x]).$$
	Since $$e_i e_k+e_ke_i=-2\delta_{ik},$$
	we conclude
	$$pA_p(x,f)-A_p(x,f)\overline{p}=-2\sum _{i,j=1}^7 p_ip_jf_{\R}([e_i,e_j,x])=0.$$
	The last step used the skew symmetricity of the associator $[e_i,e_j,x]$ in  $i,j$.

	To prove \eqref{eq:Omod left bi Re(Ap(xf)q=-Re(Aq(xf)p))}, we first  prove the  following identity
	\begin{eqnarray}
	\re\left(f([p,q,x])\right)=\re \left(A_p(x,f){q}\right)\label{eq:Omod left bi Re(f[pqx])=Re(Ap(x,f)q)}.
	\end{eqnarray}
	It suffices to show the case where  $q=e_i$ for each  $i=1,\dots,7$.
	By direct calculations, we have
	\begin{align*}
	\re \left(A_p(x,f){q}\right)&=\re\left(\sum_{j=1}^7 (e_jf_{\R}([e_j,p,x]))e_i\right)&\text{by Theorem \ref{thm: f in Hom(M,M')  equivalent:}}\\
	&=\re\left(\sum_{j=1}^7 f_{\R}([e_j,p,x]) e_je_i\right)&\text{by  \eqref{prop:re part}}\\
	&=-f_{\R}([e_i,p,x])&\text{by  \eqref{eq:real part on good bimod }}\\
	&=\re\left(f([p,q,x])\right).
	\end{align*}
	Due to \eqref{eq:Omod left bi Re(f[pqx])=Re(Ap(x,f)q)}, we conclude that  \eqref{eq:Omod left bi Re(Ap(xf)q=-Re(Aq(xf)p))} holds.
	
\end{proof}

\begin{prop}\label{prop:bimod,left,associator}
	Let  $M$, $M'$ be  two $\spo$-bimodules and $f\in \Hom_\mathcal{LO}(M,M')$.
	Then for all $p$, $q\in \spo$, and $ x\in M$ we have
	\begin{align}
	\re\left(f([p,x,q])\right)&=\re \left(pA_q(x,f)\right)\label{eq:Omod bi,left, Re f([p,x,q])=Re pA_q(x,f)}.
	\end{align}
\end{prop}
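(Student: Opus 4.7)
The plan is to reduce the target identity to the previously established \eqref{eq:Omod left bi Re(f[pqx])=Re(Ap(x,f)q)}, namely $\re(f([p,q,x])) = \re(A_p(x,f)q)$, by exploiting the extra structure that $M$ is now an $\O$-bimodule rather than just a left module. This provides two new ingredients: the full alternating property of the associator on $M$, and the central/associative nature of $\re M'$.

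First, I would observe that in the bimodule $M$ the axioms $[p,q,m]=[m,p,q]=[q,m,p]$ combined with left alternativity $[p,p,m]=0$ force the associator to be totally antisymmetric in its three arguments. Setting $q=p$ in the cyclic identity yields $[p,m,p]=0$ and $[m,p,p]=0$, and polarization then gives $[p,x,q]=-[p,q,x]$. Applying $f$, taking real parts, and invoking \eqref{eq:Omod left bi Re(f[pqx])=Re(Ap(x,f)q)} from Proposition \ref{prop:left,left associator prop} together with the skew-symmetry \eqref{eq:Omod left bi Re(Ap(xf)q=-Re(Aq(xf)p))}, I get
\begin{equation*}
\re\bigl(f([p,x,q])\bigr) \;=\; -\re\bigl(f([p,q,x])\bigr) \;=\; -\re\bigl(A_p(x,f)q\bigr) \;=\; \re\bigl(A_q(x,f)p\bigr).
\end{equation*}

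The remaining task is to show $\re(pA_q(x,f)) = \re(A_q(x,f)p)$; this is where the bimodule structure of $M'$ enters. Using Theorem \ref{thm: f in Hom(M,M')  equivalent:}, I expand $A_q(x,f) = \sum_{i=1}^{7} e_i m_i$ with $m_i = f_\R([e_i,q,x]) \in \re M'$. Since $\re M' = \huaa{M'} = \hua{Z}{}{M'}$, each $m_i$ is both associative and central, so I can slide $p$ past $e_i m_i$: on the left this gives $(pe_i)m_i$ via $[p,e_i,m_i]=0$, and on the right it gives $(e_i p)m_i$ via $[e_i,m_i,p]=0$ followed by centrality of $m_i$ followed by $[e_i,p,m_i]=0$. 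Their difference is $\sum_i[p,e_i]\,m_i$, and applying \eqref{eq:real part on good bimod } together with the fact that $\re[p,e_i]=0$ in $\O$ gives $\re(pA_q(x,f)-A_q(x,f)p)=0$.

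The main obstacle I expect is precisely this last step: since $A_q(x,f)$ is not itself real, one cannot just appeal to $\re(ab)=\re(ba)$ in $\O$. The insight that unlocks it is that $A_q(x,f)$ lies in $\O\cdot\re M'$ with the real coefficients being the nucleus $\huaa{M'}$, so the commutator $pA_q-A_qp$ is forced into $[\O,\O]\cdot\re M'$ whose real part vanishes by Theorem \ref{lem:real part on good bimod }(2). Once this commutativity-up-to-real-part is in hand, the rest is just stringing together two identities from Proposition \ref{prop:left,left associator prop}.
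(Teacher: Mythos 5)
Your proof is correct, and it takes a recognizably different route from the paper's. The paper proves \eqref{eq:Omod bi,left, Re f([p,x,q])=Re pA_q(x,f)} by a single direct computation: it reduces to $p=e_i$, expands $A_q(x,f)=\sum_j e_j f_{\R}([e_j,q,x])$ via Theorem \ref{thm: f in Hom(M,M')  equivalent:}, and reads off $\re(e_iA_q(x,f))=-f_{\R}([e_i,q,x])=\re f([e_i,x,q])$ using the antisymmetry of the bimodule associator. You instead factor the argument through the two identities already established in Proposition \ref{prop: left,bimod associator }, namely \eqref{eq:Omod left bi Re(f[pqx])=Re(Ap(x,f)q)} and \eqref{eq:Omod left bi Re(Ap(xf)q=-Re(Aq(xf)p))} (note your citation should point to that proposition, not to Proposition \ref{prop:left,left associator prop}), which leaves you with the separate claim $\re(pA_q(x,f))=\re(A_q(x,f)p)$; your proof of that claim, sliding $p$ past $e_im_i$ with $m_i\in\re M'=\huaa{M'}=\hua{Z}{}{M'}$ and killing $\re[p,e_i]$ via \eqref{eq:real part on good bimod }, is sound. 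What your organization buys is that this commutation-up-to-real-part lemma is isolated and proved explicitly, whereas the paper invokes exactly this fact without proof later (in the proof of Theorem \ref{Thm:left, bimod Hom(M,M')} it cites ``the fact $\re(pA_r(x,f))=\re(A_r(x,f)p)$''); what the paper's direct computation buys is brevity, since it never needs that intermediate fact for this proposition. Both arguments ultimately rest on the same two pillars: the expansion of the second associator from Theorem \ref{thm: f in Hom(M,M')  equivalent:} and the total antisymmetry of the associator in an alternative bimodule.
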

\begin{proof} As usual, we write
	$$f(x)=f_{\R}(x)+\sum_{i=1}^7 e_if_i(x),\qquad  x=x_0+\sum_{i=1}^7 e_ix_i.$$ 		
	\bfs\ that $p=e_i$ with $i\in \{1,\dots,7\}$.
		It follows from Theorem \ref{thm: f in Hom(M,M')  equivalent:} that
	\begin{align*}
	\re \left(e_iA_q(x,f)\right)&=\re \left(\sum_{j=1}^7 e_ie_jf_{\R}([e_j,q,x])\right)\\
	&=-f_{\R}([e_i,q,x])\\
	&=\re\left(f([e_i,x,q])\right).
	\end{align*}
	This proves \eqref{eq:Omod bi,left, Re f([p,x,q])=Re pA_q(x,f)}.
	
\end{proof}

\subsection{$\spo$-module structure of $\Hom_\mathcal{LO}(M,M')$ }

Now we can   provide $\Hom_\mathcal{LO}(M,M')$ with  an $\spo$-module structure.

\begin{thm}\label{Thm:left, bimod Hom(M,M')}
	Let $M$ be a left $\spo$-module and $M'$  an $\spo$-bimodule. Then $\Hom_\mathcal{LO}(M,M')$ admits a   right $\spo$-module structure with respect to  the right scalar multiplication  defined by \eqref{eqdef:<x,fr>}, i.e.,
	$$\left<x,f\odot r\right>:=\left<x,f\right>r-A_r(x,f)	$$
 	{for all $f\in \Hom_\mathcal{LO}(M,M')$, $r\in \spo$, and $ x\in M$.}
\end{thm}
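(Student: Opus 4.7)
The plan is to verify the three requirements for a right $\spo$-module structure in turn: (a) that $f\odot r$ remains $\spo$-\almost linear, (b) that the action is $\R$-bilinear with $f\odot 1=f$, and (c) that it is right alternative, namely $(f\odot r)\odot r=f\odot r^2$ for every $r\in\spo$. Conditions (a) and (c) are the substantive ones.

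For (a) I will compute $\re A_p(x,f\odot r)$ via identity \eqref{eq:algmod A_p(x,fr)=[p,f(x),r]+A_p(x,f)r-A_r(px,f)+pA_r(x,f)}. Four terms appear: the associator $[p,f(x),r]$ has vanishing real part by \eqref{prop:re part}; $\re A_r(px,f)=0$ by the para-linearity of $f$; and the remaining pair $\re(A_p(x,f)r)+\re(pA_r(x,f))$ cancels once one combines the antisymmetry \eqref{eq:Omod left bi Re(Ap(xf)q=-Re(Aq(xf)p))} with the trace-type identity $\re(\alpha y)=\re(y\alpha)$ for $\alpha\in\spo$ and $y\in M'$. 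The latter is not isolated as a lemma but follows quickly from the decomposition $M'=\bigoplus_{i=0}^{7}e_i\re M'$ together with the classical octonionic fact $\re(e_i\alpha)=\re(\alpha e_i)$.

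Condition (b) is immediate from the bilinearity of $A_r(x,f)$ in $(r,f)$ and the vanishing of $A_1(x,f)$. For (c) I will reduce the alternativity identity to a single algebraic equality. Expanding $(f\odot r)\odot r$ via definition \eqref{eqdef:<x,fr>} twice and applying \eqref{eq:algmod A_p(x,fr)=[p,f(x),r]+A_p(x,f)r-A_r(px,f)+pA_r(x,f)} at $p=r$ (the bracket $[r,f(x),r]$ dies by bimodule alternativity in $M'$), one obtains
$$\fx{x}{(f\odot r)\odot r}=\fx{x}{f}r^2-2A_r(x,f)r+A_r(rx,f)-rA_r(x,f).$$
Proposition \ref{prop:left,left associator prop}(ii) yields $A_{r^2}(x,f)=rA_r(x,f)+A_r(rx,f)$, so that
$$\fx{x}{f\odot r^2}=\fx{x}{f}r^2-rA_r(x,f)-A_r(rx,f).$$
Subtracting, the whole right-alternative law collapses to the single identity $A_r(x,f)\,r=A_r(rx,f)$.

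This last identity is where the real work lies. Its right-hand side equals $\overline{r}A_r(x,f)$ by Proposition \ref{prop:left,left associator prop}(iii), so it remains to show $A_r(x,f)\,r=\overline{r}A_r(x,f)$. To obtain this I will apply \eqref{eq:Omod left bi pAp(xf)=Ap(xf)p} with $r$ replaced by $\overline{r}$, yielding $\overline{r}A_{\overline{r}}(x,f)=A_{\overline{r}}(x,f)r$, and then invoke $A_{\overline{r}}(x,f)=-A_r(x,f)$ from Proposition \ref{prop:left,left associator prop}(i) to finish. The main obstacle throughout is bookkeeping the alternation between left and right multiplications by $r$ versus $\overline{r}$; it is resolved precisely by playing the bimodule identity \eqref{eq:Omod left bi pAp(xf)=Ap(xf)p} against the conjugation formula $A_{\overline{r}}=-A_r$ for the second associator.
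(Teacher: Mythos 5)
Your proposal is correct and follows essentially the same route as the paper: part (a) is the paper's argument verbatim, via identity \eqref{eq:algmod A_p(x,fr)=[p,f(x),r]+A_p(x,f)r-A_r(px,f)+pA_r(x,f)}, the antisymmetry \eqref{eq:Omod left bi Re(Ap(xf)q=-Re(Aq(xf)p))}, and $\re(py)=\re(yp)$, while part (c) rests on the same chain $A_r(x,f)r=-A_{\overline{r}}(x,f)r=-\overline{r}A_{\overline{r}}(x,f)=A_r(rx,f)$ drawn from Proposition \ref{prop:left,left associator prop} and identity \eqref{eq:Omod left bi pAp(xf)=Ap(xf)p}. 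The only (harmless) difference is that you establish the exact pointwise identity $(f\odot r)\odot r=f\odot r^2$ by fully expanding $A_r(x,f\odot r)$ and $A_{r^2}(x,f)$, whereas the paper compares only real parts and then invokes Corollary \ref{cor: A_p(x,f)=0 and f(px)=pf(x)} to conclude that two para-linear maps with equal real parts coincide.
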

\begin{proof}
	First we claim that $f\odot r\in \Hom_\mathcal{LO}(M,M') $ for all $r\in \O$. Recall  identity \eqref{eq:algmod A_p(x,fr)=[p,f(x),r]+A_p(x,f)r-A_r(px,f)+pA_r(x,f)}
	\begin{align*}
	A_p(x,f\odot r)&=[p,f(x),r]+A_p(x,f)r-A_r(px,f)+pA_r(x,f).
 	\end{align*}
	In order	to deduce $\re A_p(x,f\odot r)=0$,
	we only need to show that
	$$\re(A_p(x,f)r)=-\re(pA_r(x,f)).$$
	This follows from  identity \eqref{eq:Omod left bi Re(Ap(xf)q=-Re(Aq(xf)p))} and the fact $$\re (pA_r(x,f))=\re(A_r(x,f)p).$$
	
	Next we show    $\Hom_\mathcal{LO}(M,M')$ is  a right $\spo$-module. It suffices to prove $$(f\odot r)\odot r=f\odot r^2$$
	for any $r\in \O$. By definition \eqref{eqdef:<x,fr>}, we have
	\begin{align}
	\fx{x}{(f\odot r)\odot r}&=\fx{x}{f\odot r}r-A_r(x,f\odot r)\notag\\
	&=\fx{x}{f}r^2-A_r(x,f)r-A_r(x,f\odot r)\label{eq:<x,fr,r>}
	\intertext{and }
	\fx{x}{f\odot r^2}		&=	\fx{x}{f}r^2-A_{r^2}(x,f).\label{eq:<xfr2>}
	\end{align}
	Since $f,f\odot r\in \Hom_\mathcal{LO}(M,M')$, by Definition \ref{def:\almost linear} we have
	$$\re A_r(x,f\odot r)=0,\quad \re A_{r^2}(x,f)=0.$$
	Combining \eqref{eq:<x,fr,r>} and \eqref{eq:<xfr2>}, we get
	\begin{align*}
	\re \fx{x}{(f\odot r)\odot r-f\odot r^2}&=-\re(A_r(x,f)r)\\
	&=\re(A_{\overline{r}}(x,f)r)&\text{a\asertion{1} of Proposition  \ref{prop:left,left associator prop}}\\
	&=\re (\overline{r} A_{\overline{r}}(x,f))&\text{	by identity  \eqref{eq:Omod left bi pAp(xf)=Ap(xf)p}}\\
	&=\re A_{\overline{r}}(rx,f)&\text{a\asertion{3} of Proposition  \ref{prop:left,left associator prop}}\\
	&=0.
	\end{align*}
	Note that $$(f\odot r)\odot r-f\odot r^2\in \Hom_\mathcal{LO}(M,M').$$ It follows from  a\asertion{2} of  Corollary \ref{cor: A_p(x,f)=0 and f(px)=pf(x)} that $(f\odot r)\odot r=f\odot r^2$.	
	This completes the proof.
\end{proof}

 \begin{thm}\label{thm:O mod stct bi-lrft }
	Let  $M$, $M'$ be two $\spo$-bimodules. Then $\Hom_\mathcal{LO}(M,M')$ admits a left $\spo$-module structure with the left scalar multiplication  defined by  \eqref{eqdef:<x,rf>}, i.e.,
	$$	\left<x,r\odot f\right>:=\left<xr,f\right>+A_r(x,f)$$
	{for all $f\in \Hom_\mathcal{LO}(M,M')$, $r\in \spo$, and $ x\in M$.}
 \end{thm}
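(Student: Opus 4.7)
The plan is to mirror the proof of Theorem \ref{Thm:left, bimod Hom(M,M')}. Two things need verifying: (i) $r \odot f \in \Hom_\mathcal{LO}(M,M')$ for every $r \in \O$, and (ii) the left-alternativity $r \odot (r \odot f) = r^2 \odot f$.

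For (i), I start from identity \eqref{eq:Ap(x,rf)bileft},
\[
A_p(x, r\odot f) = f([p,x,r]) + A_p(xr, f) + A_r(px, f) - pA_r(x, f),
\]
and take real parts. Since $xr, px \in M$ and $f \in \Hom_\mathcal{LO}(M,M')$, the two middle terms vanish. The remaining piece is $\re f([p,x,r]) - \re(pA_r(x,f))$, which vanishes by \eqref{eq:Omod bi,left, Re f([p,x,q])=Re pA_q(x,f)} of Proposition \ref{prop:bimod,left,associator}. Hence $\re A_p(x, r\odot f) = 0$, so $r \odot f \in \Hom_\mathcal{LO}(M,M')$.

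For (ii), unwinding definition \eqref{eqdef:<x,rf>} yields
\[
\fx{x}{r\odot(r\odot f)} - \fx{x}{r^2\odot f} = \fx{(xr)r - xr^2}{f} + A_r(xr, f) + A_r(x, r\odot f) - A_{r^2}(x, f).
\]
The bimodule axiom gives $[x,r,r] = [r,r,x] = 0$, so $(xr)r = xr^2$ and the first piece drops. Applying \eqref{eq:Ap(x,rf)bileft} with $p = r$ (using $[r,x,r] = 0$ again) yields
\[
A_r(x, r\odot f) = A_r(xr, f) + A_r(rx, f) - r A_r(x, f),
\]
and a\asertion{2} of Proposition \ref{prop:left,left associator prop} gives $A_{r^2}(x, f) = r A_r(x, f) + A_r(rx, f)$. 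Substituting, the residual is $2 A_r(xr, f) - 2 r A_r(x, f)$. Its real part vanishes termwise: $\re A_r(xr, f) = 0$ since $f \in \Hom_\mathcal{LO}$, while $r A_r(x, f) = A_r(\overline{r}x, f)$ by a\asertion{3} of Proposition \ref{prop:left,left associator prop}, whose real part is again zero because $\overline{r}x \in M$. Since $r \odot (r \odot f) - r^2 \odot f$ lies in $\Hom_\mathcal{LO}(M,M')$ by step (i) and its real part vanishes pointwise, a\asertion{2} of Corollary \ref{cor: A_p(x,f)=0 and f(px)=pf(x)} forces $r \odot (r \odot f) = r^2 \odot f$.

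The main obstacle is the bookkeeping in step (ii): one must orchestrate identity \eqref{eq:Ap(x,rf)bileft}, assertions (ii) and (iii) of Proposition \ref{prop:left,left associator prop}, and the bimodule alternativity $[x,r,r] = 0$ so that every leftover expression becomes either a second associator of a para-linear map or an obvious cancellation. The closing move—promoting "real part identically zero" to genuine equality via a\asertion{2} of Corollary \ref{cor: A_p(x,f)=0 and f(px)=pf(x)}—is the same trick used in the proof of Theorem \ref{Thm:left, bimod Hom(M,M')}, confirming that the expected dual left-module structure emerges from the same template.
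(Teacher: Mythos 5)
Your proof is correct and follows essentially the same route as the paper: step (i) is identical, and step (ii) uses the same template of showing the real part of $r\odot(r\odot f)-r^2\odot f$ vanishes and then invoking a\asertion{2} of Corollary \ref{cor: A_p(x,f)=0 and f(px)=pf(x)}. The only (harmless) difference is that your further expansion of $A_r(x,r\odot f)$ via \eqref{eq:Ap(x,rf)bileft} and Proposition \ref{prop:left,left associator prop} is unnecessary: once step (i) establishes $r\odot f\in\Hom_\mathcal{LO}(M,M')$, the term $\re A_r(x,r\odot f)$ vanishes immediately by the definition of para-linearity, which is how the paper concludes.
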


\begin{proof}
	First we claim that $$r\odot f\in \Hom_\mathcal{LO}(M,M') $$ for all $r\in \O$ and $f\in \Hom_\mathcal{LO}(M,M')$. Indeed, using identities  \eqref{eq:Omod bi,left, Re f([p,x,q])=Re pA_q(x,f)} and   \eqref{eq:Ap(x,rf)bileft}, we  obtain $$	\re \left(A_p(x,r\odot f)\right)=\re(f([p,x,r]))-\re(pA_r(x,f))=0.$$
 	
	Next we show that  $ \Hom_\mathcal{LO}(M,M')$ is  a left $\spo$-module. It suffices to prove $r\odot (r\odot f)=r^2\odot f$ for all $r\in \O$.
	For any $x\in M$, by definition \eqref{eqdef:<x,rf>} we have	
	\begin{align}\label{eq:r(rf)(x)}
	\fx{x}{r\odot(r\odot f)}=\fx{xr}{r\odot f}+A_r(x,r\odot f)
	=\fx{xr^2}{f}+A_r(xr,f)+A_r(x,r\odot f).
	\end{align}
	Since $f,r\odot f\in \Hom_\mathcal{LO}(M,M')$, we have
	\begin{align*}
	\re \fx{x}{r\odot(r\odot f)-r^2\odot f}&=\re\left(\fx{xr^2}{f}+A_r(xr,f)+A_r(x,r\odot f)\right)-\re\left(\fx{xr^2}{f}+A_{r^2}(x,f)\right)\\
	&=\re(A_r(xr,f)+A_r(x,r\odot f)-A_{r^2}(x,f))\\
	&=0.
	\end{align*}
	It  follows from   Corollary \ref{cor: A_p(x,f)=0 and f(px)=pf(x)} that $r\odot (r\odot f)=r^2\odot f$.	
	This finishes the proof.		
\end{proof}

As a consequence, we have  the following property about the second associators.

\begin{prop}\label{thm:bimod,left,associator}
	Let  $M$, $M'$ be two $\spo$-bimodules and $f\in  \Hom_\mathcal{LO}(M,M')$. Then for all , $ x\in M$ and $r\in \spo$ we have
	\begin{align}
	%\re\left(pA_q(x,f)\right)&=-\re\left(qA_p(x,f)\right)\label{eq:Omod bi,left, Re pA_q(x,f)=-Re qA_p(x,f)}\\
	A_r(xr,f)&=rA_r(x,f);\label{eq:Omod bi,left, Ap(xp,f)=pAp(x,f)}  \\
	A_r(x,f)\overline{r}&=A_r(x{r},f).
	\end{align}
 \end{prop}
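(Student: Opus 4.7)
The plan is to derive the first identity $A_r(xr,f)=rA_r(x,f)$ from the left alternative law of the module structure on $\Hom_\mathcal{LO}(M,M')$ established in Theorem \ref{thm:O mod stct bi-lrft }, and then obtain the second identity for free by composing with \eqref{eq:Omod left bi pAp(xf)=Ap(xf)p} of Proposition \ref{prop: left,bimod associator }.

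Concretely, I would specialise identity \eqref{eq:alm bi left[pqf]x in } at $p=q=r$. On the left-hand side, the associator $[r,r,f]$ vanishes because $\Hom_\mathcal{LO}(M,M')$ is a left $\O$-module; on the right-hand side, $f([x,r,r])=0$ because $M$ is an alternative bimodule. The identity therefore collapses to
\begin{equation*}
0=A_{r^2}(x,f)-A_r(xr,f)-A_r(x,r\odot f),
\end{equation*}
so the task reduces to computing $A_{r^2}(x,f)$ and $A_r(x,r\odot f)$ in closed form and solving for $A_r(xr,f)$.

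For the first remaining term, combining assertions (2) and (3) of Proposition \ref{prop:left,left associator prop} yields
\begin{equation*}
A_{r^2}(x,f)=rA_r(x,f)+A_r(rx,f)=(r+\overline{r})A_r(x,f)=2\re(r)\,A_r(x,f).
\end{equation*}
For the second, I specialise identity \eqref{eq:Ap(x,rf)bileft} at $p=r$; using $[r,x,r]=0$ and $A_r(rx,f)=\overline{r}A_r(x,f)$ once more, this simplifies to
\begin{equation*}
A_r(x,r\odot f)=A_r(xr,f)+(\overline{r}-r)A_r(x,f).
\end{equation*}
Substituting both simplifications into the displayed equation and collecting terms gives $2A_r(xr,f)=(r+\overline{r})A_r(x,f)+(r-\overline{r})A_r(x,f)=2rA_r(x,f)$, which is the first identity. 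The second identity $A_r(x,f)\overline{r}=A_r(xr,f)$ is then immediate from \eqref{eq:Omod left bi pAp(xf)=Ap(xf)p}.

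The main conceptual step is the first one: recognising that the seemingly ad hoc identity $A_r(xr,f)=rA_r(x,f)$ is just a repackaging of the left alternative law $r\odot(r\odot f)=r^2\odot f$ through the associator formula \eqref{eq:alm bi left[pqf]x in }. Everything afterwards is routine bookkeeping that collapses cleanly thanks to $(r+\overline{r})+(r-\overline{r})=2r$.
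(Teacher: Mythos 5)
Your proposal is correct and takes essentially the same route as the paper: both arguments extract the first identity from the left alternative law $r\odot(r\odot f)=r^2\odot f$ of Theorem \ref{thm:O mod stct bi-lrft }, expanded via identity \eqref{eq:Ap(x,rf)bileft} and a\asertion{2} of Proposition \ref{prop:left,left associator prop}, and then obtain the second identity from \eqref{eq:Omod left bi pAp(xf)=Ap(xf)p}. The only cosmetic difference is that you rewrite $A_r(rx,f)$ as $\overline{r}A_r(x,f)$ before cancelling, whereas the paper lets those terms cancel directly.
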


\begin{proof}
 	Let $f\in \Hom_\mathcal{LO}(M,M')$, $x\in M$ and $r\in \O$.
	Combining \eqref{eq:r(rf)(x)} with    \eqref{eq:Ap(x,rf)bileft}, we obtain 		$$\fx{x}{r\odot
		(r\odot f)}=\fx{xr^2}{f}+2A_r(xr,f)+A_r(rx,f)-rA_r(x,f).$$
	In view of a\asertion{2} of Proposition \ref{prop:left,left associator prop}, we get
	\begin{align*}
	\fx{x}{r^2\odot f}	
	&=\fx{xr^2}{f}+A_{r^2}(x,f)\\
	&=\fx{xr^2}{f}+rA_r(x,f)+A_r(rx,f).
	\end{align*}
	From Theorem \ref{thm:O mod stct bi-lrft }, we know that $r^2\odot f=r\odot (r\odot f)$.
	Therefore we have
	$$2A_r(xr,f)-rA_r(x,f)=rA_r(x,f).$$
	This implies  identity \eqref{eq:Omod bi,left, Ap(xp,f)=pAp(x,f)} holds.
	Combining  \eqref{eq:Omod left bi pAp(xf)=Ap(xf)p} with  \eqref{eq:Omod bi,left, Ap(xp,f)=pAp(x,f)}, we conclude
	$$A_p(x,f)\overline{p}={p}A_p(x,f)=A_p(x{p},f)$$ as desired.	
\end{proof}
 \begin{rem}
	Propositions	\ref{prop:left,left associator prop},  \ref{prop: left,bimod associator },      \ref{prop:bimod,left,associator} and  \ref{thm:bimod,left,associator} generalize Lemma 3.9 in  \cite{Grigorian2017octonionbundles} from the specific $\O$-module $\O$ itself  to the general $\O$-modules.
	
\end{rem}

\bigskip

By Theorems \ref{Thm:left, bimod Hom(M,M')} and   \ref{thm:O mod stct bi-lrft }, $ \Hom_\mathcal{LO}(M,M')$ admits a right $\spo$-module structure as well as a left $\spo$-module structure. Then a natural question arises whether the left and right scalar multiplications in $ \Hom_\mathcal{LO}(M,M')$ are compatible so that $ \Hom_\mathcal{LO}(M,M')$ admits an $\spo$-bimodule structure.
We shall  give an affirmative answer.

\begin{lemma}\label{lem:[p,q,f](x)=[p,q,f(x)],[q,f,p](x)=[q,f(x),p],[f,p,q](x)=[f(x),p,q]}
	Let  $M,$  $M'$ be two  $\spo$-bimodules and $f\in \Hom_\mathcal{LO}(M,M')$. Then with respect to the scalar multiplications  \eqref{eqdef:<x,fr>} and \eqref{eqdef:<x,rf>}, we have
	$$[p,q,f]=[q,f,p]=[f,p,q]$$
	for all $p,q\in \O$.
\end{lemma}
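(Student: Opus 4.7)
The plan is to invoke assertion~(ii) of Corollary~\ref{cor: A_p(x,f)=0 and f(px)=pf(x)}: an element of $\Hom_\mathcal{LO}(M,M')$ with vanishing real part is zero. Hence it suffices to establish (a) that $[p,q,f]$, $[q,f,p]$ and $[f,p,q]$ all lie in $\Hom_\mathcal{LO}(M,M')$, and (b) that their real parts agree on every $x\in M$. Part (a) is essentially automatic: each of the three associators is a difference of twofold applications of the left and right scalar actions \eqref{eqdef:<x,fr>} and \eqref{eqdef:<x,rf>} to $f$, and both actions preserve para-linearity by Theorems~\ref{Thm:left, bimod Hom(M,M')} and~\ref{thm:O mod stct bi-lrft }.

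For part (b) I would feed $f$, $q\odot f$, and $f\odot p$—each of which is again para-linear by the same two theorems—into the three expansion identities for $[p,q,f](x)$, $[q,f,p](x)$ and $[f,p,q](x)$ provided by Proposition~\ref{thm:mod strc over general mod}. By Definition~\ref{def:\almost linear}, every summand of the shape $\re A_r(\cdot,g)$ with $g\in\Hom_\mathcal{LO}(M,M')$ vanishes, and $\re[f(x),p,q]=0$ by~\eqref{prop:re part}. After discarding these terms the three real parts collapse to
\begin{align*}
\re[p,q,f](x)&=-\re f([x,p,q]),\\
\re[q,f,p](x)&=\re\!\bigl(A_q(x,f)\,p\bigr),\\
\re[f,p,q](x)&=-\re\!\bigl(A_p(x,f)\,q\bigr).
\end{align*}

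The equality of the last two is exactly~\eqref{eq:Omod left bi Re(Ap(xf)q=-Re(Aq(xf)p))}. To match these with the first, identity~\eqref{eq:Omod bi,left, Re f([p,x,q])=Re pA_q(x,f)} yields $\re f([p,x,q])=\re(pA_q(x,f))=\re(A_q(x,f)p)$, and since $\O$ is alternative the associator is skew in its first two slots, giving $[x,p,q]=-[p,x,q]$; hence $-\re f([x,p,q])=\re(A_q(x,f)p)$, which agrees with the second line. Combined with (a), this forces $[p,q,f]=[q,f,p]=[f,p,q]$ as maps in $\Hom_\mathcal{LO}(M,M')$.

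The main obstacle is purely bookkeeping: each of the three formulas pulled from Proposition~\ref{thm:mod strc over general mod} carries four terms with mixed left/right actions, and one must keep careful track of which $A_r$-input is $f$, which is $q\odot f$, and which is $f\odot p$, in order to invoke Definition~\ref{def:\almost linear} on the correct summands. Once the argument is organized around reducing everything to the two real-part identities of Propositions~\ref{prop: left,bimod associator } and~\ref{prop:bimod,left,associator}, the verification is essentially linear in the number of terms and contains no further surprises.
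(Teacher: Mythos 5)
Your argument is correct, but it reduces the problem differently from the paper. The paper restricts attention to associative elements $x\in\re M$: there every second associator $A_r(x,g)$ of a para-linear $g$ vanishes outright (not merely its real part), so the expansions of Proposition \ref{thm:mod strc over general mod} collapse to the exact equalities $[f,p,q](x)=[p,q,f](x)=[q,f,p](x)=[p,q,f(x)]$, and Corollary \ref{lem: f(huaa M)=0 yields f=0} (a para-linear map vanishing on $\re M$ is zero) finishes the proof. You instead keep $x\in M$ arbitrary, take real parts of the same three expansions \eqref{eq:alm left bi[f,p,q](x)=[f(x),p,q]-Ap(x,f)q-Aq(x,fp)+A pq(x,f)}, \eqref{eq:alm bi left[pqf]x in } and \eqref{eq:bi,bi [qfp]=Aq(xf)p-A_p(x,qf)+A_p(xq,f)-A_q(x,fp)}, and conclude via the other uniqueness principle, Corollary \ref{cor: A_p(x,f)=0 and f(px)=pf(x)} ($f_{\R}=0$ implies $f=0$). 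Your three displayed real-part reductions are correct, and the price of your route is that you must invoke the two nontrivial real-part identities \eqref{eq:Omod left bi Re(Ap(xf)q=-Re(Aq(xf)p))} and \eqref{eq:Omod bi,left, Re f([p,x,q])=Re pA_q(x,f)} together with the skew-symmetry $[x,p,q]=-[p,x,q]$, none of which the paper needs for this lemma; the benefit is that you avoid the explicit computation $A_p(xq,f)=[p,q,f(x)]$ on $\re M$. One small attribution slip: $[x,p,q]=-[p,x,q]$ is not a consequence of the alternativity of $\O$ alone but of the bimodule axioms $[p,q,m]=[m,p,q]=[q,m,p]$ combined with left alternativity $[p,q,m]=-[q,p,m]$; stated that way, the step is fully justified and the proof is complete.
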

\begin{proof}
 	In view of   Corollary \ref{lem: f(huaa M)=0 yields f=0},  we only need to show that  $$[p,q,f](x)=[q,f,p](x)=[f,p,q](x) $$
	for all $x\in \re{M}$.
	
	Let $f\in \Hom_\mathcal{LO}(M,M')$ and fix an arbitrary $x\in \re M$.
	In view of   a\asertion{1} of Corollary \ref{cor: A_p(x,f)=0 and f(px)=pf(x)}, we immediately conclude from identity \eqref{eq:alm left bi[f,p,q](x)=[f(x),p,q]-Ap(x,f)q-Aq(x,fp)+A pq(x,f)} that
	$$[f,p,q](x)= [f(x),p,q].$$ %when $x\in \huaa{M}$.
	Similarly,  from \eqref{eq:alm bi left[pqf]x in } we have    $$[p,q,f](x)=-A_q(xp,f).$$
	Since   $x\in\re M$, it follows from Theorem \ref{lem:real part on good bimod } that
	\begin{align*}
	A_p(xq,f)&=\fx{p(xq)}{f}-p\fx{xq}{f}\\
	&=\fx{(pq)x}{f}-p\fx{qx}{f}\\
	&=(pq)f(x)-p(qf(x))\\
	&=[p,q,f(x)].
	\end{align*}
	This implies that  $${[p,q,f]}(x)=[p,q,f(x)]= [f(x),p,q]=[f,p,q](x).$$
	%		 By above theorems, we have seen
	%		$$[f,p,q](x)= [f(x),p,q] \text{ and }   {[p,q,f]}(x)=[p,q,f(x)].$$
	It remains  to show  $$[q,f,p](x)=[p,q,f](x).$$
	We conclude from    identity \eqref{eq:bi,bi [qfp]=Aq(xf)p-A_p(x,qf)+A_p(xq,f)-A_q(x,fp)}
 	that $${[q,f,p]}(x)=A_p(xq,f).$$
	Thus we obtain $[q,f,p](x)=[p,q,f](x)$ as desired.
\end{proof}
 As a consequence, we have

\begin{thm}\label{thm:Hom bimod}
	Let  $M,$  $M'$ be two $\spo$-bimodules. Then $\Hom_\mathcal{LO}(M,M')$ admits a  canonical  $\spo$-bimodule structure.
\end{thm}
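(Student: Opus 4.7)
The plan is to assemble the $\O$-bimodule structure on $\Hom_\mathcal{LO}(M,M')$ from the three pieces that have already been put in place: the left action from Theorem \ref{thm:O mod stct bi-lrft }, the right action from Theorem \ref{Thm:left, bimod Hom(M,M')}, and the compatibility between these two actions from Lemma \ref{lem:[p,q,f](x)=[p,q,f(x)],[q,f,p](x)=[q,f(x),p],[f,p,q](x)=[f(x),p,q]}. Since the definition of an $\O$-bimodule here consists of (i) a left $\O$-module structure, (ii) a right $\R$-bilinear multiplication, and (iii) the alternator identity $[p,q,f]=[f,p,q]=[q,f,p]$, the theorem reduces to verifying that the two actions defined by \eqref{eqdef:<x,fr>} and \eqref{eqdef:<x,rf>} fit together into this template.

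First I would note that Theorem \ref{thm:O mod stct bi-lrft } supplies the left $\O$-module structure on $\Hom_\mathcal{LO}(M,M')$ (both $M$ and $M'$ being bimodules is exactly what that theorem assumes), while Theorem \ref{Thm:left, bimod Hom(M,M')} supplies the right $\O$-module structure. Right $\R$-bilinearity of $\odot$ on the right is immediate from the defining formula $\left<x,f\odot r\right>=\left<x,f\right>r-A_r(x,f)$ together with the $\R$-linearity of $A_{(\cdot)}(x,f)$ in its scalar argument, so the only nontrivial clause left is the alternator identity.

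Next I would invoke Lemma \ref{lem:[p,q,f](x)=[p,q,f(x)],[q,f,p](x)=[q,f(x),p],[f,p,q](x)=[f(x),p,q]}, which establishes precisely the equality $[p,q,f]=[q,f,p]=[f,p,q]$ in $\Hom_\mathcal{LO}(M,M')$ for all $p,q\in\O$, with respect to the very multiplications \eqref{eqdef:<x,fr>} and \eqref{eqdef:<x,rf>} we have just used. Combining these three inputs exhibits $\Hom_\mathcal{LO}(M,M')$ as an $\O$-bimodule in the sense of the paper's definition, and the structure is canonical because the actions $\odot$ are defined purely in terms of the given $\O$-bimodule structures on $M$ and $M'$.

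The main obstacle is not in this particular deduction, which is essentially a bookkeeping step, but in the earlier result (Lemma \ref{lem:[p,q,f](x)=[p,q,f(x)],[q,f,p](x)=[q,f(x),p],[f,p,q](x)=[f(x),p,q]}) that translates the three associators of $f$ into the single associator $[p,q,f(x)]$ in $M'$ by evaluating everything on associative elements $x\in\re M$ and applying Corollary \ref{cor: A_p(x,f)=0 and f(px)=pf(x)} via \eqref{eq:alm left bi[f,p,q](x)=[f(x),p,q]-Ap(x,f)q-Aq(x,fp)+A pq(x,f)}, \eqref{eq:alm bi left[pqf]x in } and \eqref{eq:bi,bi [qfp]=Aq(xf)p-A_p(x,qf)+A_p(xq,f)-A_q(x,fp)}. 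Given that lemma, the present theorem is a one-line consequence, so I would simply state it as a direct corollary of the three results cited above.
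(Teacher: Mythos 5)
Your proposal is correct and follows exactly the paper's route: the paper states Theorem \ref{thm:Hom bimod} as an immediate consequence of Theorems \ref{Thm:left, bimod Hom(M,M')} and \ref{thm:O mod stct bi-lrft } (supplying the right and left $\O$-module structures) together with Lemma \ref{lem:[p,q,f](x)=[p,q,f(x)],[q,f,p](x)=[q,f(x),p],[f,p,q](x)=[f(x),p,q]} (supplying the alternativity identity $[p,q,f]=[q,f,p]=[f,p,q]$ required by the definition of an $\O$-bimodule). No gap; your identification of the lemma as the substantive step is also accurate.
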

 \subsection{Real part   of $\spo$-\almost linear maps}

We assume that   $M$, $M'$ are two $\spo$-bimodules in this subsection.  We come to study the real part structure of the $\O$-bimodule $ \Hom_\mathcal{LO} (M,M')$.

\begin{thm}\label{thm:Hom kelie left,bi}
	Let $M$, $M'$ be two $\spo$-bimodules. Then $$\re(\Hom_\mathcal{LO}(M,M'))=\Hom_{\spo}(M,M').$$
	More precisely, for all $f\in \Hom_\mathcal{LO}(M,M')$, we have
	 	\begin{eqnarray}\label{eq:re f=ext( fR| reM)}
	\re f=\ext (f_{\R}|_{\re M}).
	\end{eqnarray}
\end{thm}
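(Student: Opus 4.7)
The plan is to reduce both assertions to calculations on $\re M$, using the principle (Corollary \ref{lem: f(huaa M)=0 yields f=0}) that a \almost linear map is entirely determined by its restriction to the nucleus. I will first establish the set-theoretic equality $\re(\Hom_\mathcal{LO}(M,M')) = \Hom_\spo(M,M')$ and then bootstrap this into the pointwise formula.

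For the set equality, recall that $\re(\Hom_\mathcal{LO}(M,M')) = \huaa{\Hom_\mathcal{LO}(M,M')}$ consists of those $f$ with $[p,q,f]=0$ for every $p,q\in\O$. Given $f\in\Hom_\mathcal{LO}(M,M')$, Lemma \ref{lem:[p,q,f](x)=[p,q,f(x)],[q,f,p](x)=[q,f(x),p],[f,p,q](x)=[f(x),p,q]} gives $[p,q,f](x)=[p,q,f(x)]$ for $x\in\re M$, which vanishes for all $p,q$ exactly when $f(x)\in\re M'$. Since $[p,q,f]$ itself belongs to $\Hom_\mathcal{LO}(M,M')$ by Theorem \ref{thm:Hom bimod}, Corollary \ref{lem: f(huaa M)=0 yields f=0}(i) promotes the vanishing on $\re M$ to vanishing on all of $M$. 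Combined with the equivalence in Corollary \ref{lem: f(huaa M)=0 yields f=0}(ii), this yields the chain $f\in\re(\Hom_\mathcal{LO}(M,M'))\iff f(\re M)\subseteq\re M'\iff f\in\Hom_\spo(M,M')$.

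For the formula $\re f=\ext(f_\R|_{\re M})$, note that both sides lie in $\Hom_\spo(M,M')$: the left side by the equality just established, and the right side because $f_\R$ takes values in $\re M'$, so that Corollary \ref{lem: f(huaa M)=0 yields f=0}(ii) applies to its extension. Their difference therefore lies in $\Hom_\mathcal{LO}(M,M')$, and Corollary \ref{lem: f(huaa M)=0 yields f=0}(i) reduces the problem to checking that the two maps coincide on $\re M$. For $x\in\re M$ the right side tautologically equals $f_\R(x)=\re f(x)$. To compute the left side at $x$, apply the basis expansion Theorem \ref{lem:real part on good bimod }(iii) \emph{inside} the $\O$-bimodule $\Hom_\mathcal{LO}(M,M')$ to write $f=\sum_{i=0}^{7} e_i\odot g_i$ with $g_i := \re(\overline{e_i}\odot f)\in\Hom_\spo(M,M')$ and $g_0=\re f$. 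Since $x\in\huaa{M}$, the second associator $A_{e_i}(x,g_i)$ vanishes by Corollary \ref{cor: A_p(x,f)=0 and f(px)=pf(x)}(i) and $xe_i=e_ix$, so the definition \eqref{eqdef:<x,rf>} yields $(e_i\odot g_i)(x)=g_i(e_ix)=e_ig_i(x)$, where the last step uses the $\O$-linearity of $g_i$. Because each $g_i(x)\in\re M'$ by Lemma \ref{lem:f re=re f}, the identity $f(x)=\sum_i e_ig_i(x)$ is precisely the unique decomposition coming from \eqref{eq:M=sum ei ReM}, and reading off the $i=0$ component gives $\re f(x)=g_0(x)=(\re f)(x)$, as required.

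The principal conceptual obstacle is the compatibility between two a priori unrelated projections: the \emph{bimodule} real part of $f$ extracted from the algebraic structure of $\Hom_\mathcal{LO}(M,M')$, versus the \emph{pointwise} real part of $f(x)$ inside $M'$. The key is that the polarization identity of Theorem \ref{lem:real part on good bimod }(iii), applied once to $f$ in the bimodule of para-linear maps and once to $f(x)$ in $M'$, uses the same basis $\{e_i\}$, so the two projections are forced to align once the scalar multiplications $(e_i\odot g_i)(x)$ are computed via vanishing of the relevant second associators.
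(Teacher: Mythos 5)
Your argument is correct. The first half (the set equality $\re(\Hom_\mathcal{LO}(M,M'))=\Hom_{\spo}(M,M')$) is essentially the paper's own argument: both reduce the condition $f\in\huaa{\Hom_\mathcal{LO}(M,M')}$ to the pointwise condition $f(\re M)\subseteq \re M'$ via the associator identities and Corollary \ref{lem: f(huaa M)=0 yields f=0}; you merely route through the intermediate computation $[p,q,f](x)=[p,q,f(x)]$ recorded in the proof of Lemma \ref{lem:[p,q,f](x)=[p,q,f(x)],[q,f,p](x)=[q,f(x),p],[f,p,q](x)=[f(x),p,q]} rather than re-deriving it from \eqref{eq:alm left bi[f,p,q](x)=[f(x),p,q]-Ap(x,f)q-Aq(x,fp)+A pq(x,f)}. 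The second half, however, is genuinely different from the paper. The paper proves \eqref{eq:re f=ext( fR| reM)} by applying the explicit formula \eqref{eq:def of real part} \emph{inside} the bimodule $\Hom_\mathcal{LO}(M,M')$, i.e.\ expanding $\re f=\frac{5}{12}f-\frac{1}{12}\sum_i e_i\odot f\odot e_i$ and converting $\re\sum_i f(xe_i)e_i$ into $\sum_i f_{\R}(e_ixe_i)=f_\R(\re x)$ using \eqref{prop:re part} and para-linearity. You instead invoke the polarization identity of Theorem \ref{lem:real part on good bimod }(iii) to write $f=\sum_i e_i\odot g_i$ with $g_i=\re(\overline{e_i}\odot f)$, evaluate at $x\in\re M$ to get $f(x)=\sum_i e_ig_i(x)$ with each $g_i(x)\in\re M'$, and read off the $e_0$-component against the unique decomposition \eqref{eq:M=sum ei ReM}. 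Your route avoids the $\frac{5}{12}$--$\frac{1}{12}$ computation entirely, at the price of needing the set equality first so that each $g_i$ is known to be $\spo$-linear (hence the second associators $A_{e_i}(x,g_i)$ vanish and $g_i$ commutes with $\re$ by Lemma \ref{lem:f re=re f}); since you prove the set equality before the formula, there is no circularity. The ordering of the two halves is therefore essential in your version, whereas in the paper's version the computation of $\langle x,\re f\rangle$ is independent of the set equality. Both arguments are sound; yours makes the alignment of the two real-part projections (the one on $\Hom_\mathcal{LO}(M,M')$ and the pointwise one on $M'$) more transparent, which is indeed the conceptual heart of the statement.
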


\begin{proof}
	We first prove that
	\begin{eqnarray}\label{eq:left subset}
	\Hom_{\spo}(M,M')\subseteq\re{\Hom_\mathcal{LO}(M,M')}.
	\end{eqnarray}
	%$$\Hom_{\spo}(M,M')\subseteq\re{l\text{-}\Hom(M,M')}.$$
	For any $f\in \Hom_{\spo}(M,M') $,  by definition we have $A_p(x,f)=0$ for all $p\in \spo$ and $x\in M$. Hence we conclude from identity \eqref{eq:alm left bi[f,p,q](x)=[f(x),p,q]-Ap(x,f)q-Aq(x,fp)+A pq(x,f)} that
	\begin{align*}
	[f,p,q](x)	&=[f(x),p,q]-A_q(x,f\odot p).
	\end{align*}
	This yields $$\re([f,p,q](x))=0$$ for all $x\in M$. Notice that
	$[f,p,q]\in \Hom_\mathcal{LO}(M,M')$ is a \almost linear map
	due to  Theorem \ref{Thm:left, bimod Hom(M,M')}. It follows from Corollary \ref{cor: A_p(x,f)=0 and f(px)=pf(x)} that 	$[f,p,q]=0$. This means
	$$f\in \huaa{\Hom_\mathcal{LO}(M,M')}.$$
	Since $\Hom_\mathcal{LO}(M,M')$ is an $\O$-bimodule, it follows from \eqref{eq:reM=huaaM=ZM} that
	$$f\in\re{\Hom_\mathcal{LO}(M,M')}.$$

	Conversely, if $f\in \re{\Hom_\mathcal{LO}(M,M')}$,
	then for all $x\in M$, $$[f,p,q](x)=0.$$
	 	Fix $x\in \re{M}$.  It follows from identity \eqref{eq:alm left bi[f,p,q](x)=[f(x),p,q]-Ap(x,f)q-Aq(x,fp)+A pq(x,f)} and Corollary \ref{cor: A_p(x,f)=0 and f(px)=pf(x)} that
	$$[f(x),p,q]=A_p(x,f)q+A_q(x,f\odot p)-A_{pq}(x,f)=0$$ for all $p,q\in\spo$.
	This means $f(x)\in \re{M'}$ and hence  we obtain
	$$f(\re{M})\subseteq \re{M'}.$$ By a\asertion{2} of Corollary \ref{lem: f(huaa M)=0 yields f=0}, we conclude that  $f\in \Hom_{\spo}(M,M')$. This proves
	\begin{eqnarray}\label{eq:right subset}
	\re{\Hom_\mathcal{LO}(M,M')}\subseteq\Hom_{\spo}(M,M').
	\end{eqnarray}
	%	$$\re{l\text{-}\Hom(M,M')}\subseteq\Hom_{\spo}(M,M').$$
	Combining \eqref{eq:left subset} and \eqref{eq:right subset}, we conclude
	$$\re{\Hom_\mathcal{LO}(M,M')}=\Hom_{\spo}(M,M').$$
	
	We now prove	\eqref{eq:re f=ext( fR| reM)}.
	For any $f\in \Hom_\mathcal{LO}(M,M')$, we get
	an $\R$-linear map $$f_{\R}|_{\re M} :\re M\to M'.$$ By the definition of the $\ext$ map,  we have $$\ext (f_{\R}|_{\re M})\in \Hom_\mathcal{LO}(M,M').$$
	Because of  Corollary \ref{lem: f(huaa M)=0 yields f=0}, to show \eqref{eq:re f=ext( fR| reM)} it suffices to show that $$\fx{x}{\re f}=\fx{x}{f_{\R}|_{\re M}}$$ for all $x\in \re M$.  Fix an arbitrary $x\in \re M$.	Since $\re f $ is $\O$-linear, we conclude  that $$(\re f)(x)\in \re M'.$$
	Therefore,
	\begin{align*}
	\fx{x}{\re f}&=\re\fx{x}{\re f}&\text{$\re$ is a projection operator}\\
	&=\re\Big(\langle{x}, {\frac{5}{12}f}\rangle-\langle{x}, {\frac{1}{12}\sum_{i=1}^7 e_i\odot f\odot e_i}\rangle\Big)&\text{by  \eqref{eq:def of real part}}\\
	&=\frac{5}{12}f_{\R}(x)-\frac{1}{12}\re \sum_{i=1}^7 f(xe_i)e_i&\text{using  definitions \eqref{eqdef:<x,fr>} and \eqref{eqdef:<x,rf>}}\\
	&=\frac{5}{12}f_{\R}(x)-\frac{1}{12}\re \sum_{i=1}^7 e_if(xe_i)&\text{using   \eqref{prop:re part}}\\
	&=\frac{5}{12}f_{\R}(x)-\frac{1}{12}\sum_{i=1}^7 f_{\R}(e_ixe_i)&\text{using $f\in \Hom_\mathcal{LO}(M,M')$}\\
	&=f_{\R}(\re x)\\
	&=f_{\R}(x)\\
	&=f_{\R}|_{\re M}(x).
	\end{align*}
	This completes the proof.
\end{proof}

\begin{rem}
	It	follows from a\asertion{3} of Theorem \ref{lem:real part on good bimod } that
 	for any $f\in \Hom_\mathcal{LO}(M,M')$, we have
	$$f=\sum_{i=0}^7 e_i\odot\re (\overline{e_i}\odot f)=:\sum_{i=0}^7 e_i\odot f_{(i)}.$$

	It is worth pointing out that the following seemingly natural equality
	$$(e_i\odot f_{(i)})(x)=e_if_{(i)}(x)$$   is not correct since the map under considered is left para-linear.  It  should be modified as
	\begin{eqnarray}\label{eq:eif(x)=f(x)ei}
	(e_i\odot
	f_{(i)})(x)=f_{(i)}(x)e_i.
	\end{eqnarray}
 	In fact, since  $f_{(i)}\in \Hom_{\spo}(M,M')$ is $\O$-linear, we have
	\begin{align*}
	(e_i\odot f_{(i)})(x)
	&=\fx{xe_i}{f_{(i)}}+A_{e_i}(x,f_{(i)})&\text{by definition \eqref{eqdef:<rf,x>}}\\
	&=f_{(i)}(xe_i)&\text{using $A_{e_i}(x,f_{(i)})=0$}\\
	&=f_{(i)}(x)e_i&\text{using  \eqref{eq:HomO(M,M')=l-Hom(M,M')}.}
	\end{align*}	
	
 \end{rem}

Theorem \ref{thm:Hom kelie left,bi} leads to an important and useful uniqueness result.
\begin{cor}\label{cor:f in im (U) = f(x) in im (M)}
	Under the  assumptions above, we have
	$$\re f=0
	\mbox{\ if \ and \ only \ if \ }
	\re f(x)=0\text{ for all }x\in \re {M}.$$
 \end{cor}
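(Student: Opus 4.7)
The plan is to reduce the statement to the explicit identity \eqref{eq:re f=ext( fR| reM)} already established in Theorem \ref{thm:Hom kelie left,bi}, namely $\re f = \ext (f_{\R}|_{\re M})$. Because $\ext$ is set up in Lemma \ref{lem:ext} as a bijection with inverse $\re^{*}$ given by restriction to $\re M$, the vanishing of $\re f$ will translate directly into the vanishing of $f_{\R}|_{\re M}$, and vice versa. So the whole proof is essentially a two-line exercise in chasing this bijection, once the correct interpretation of $\re f(x)$ as $\re\bigl(f(x)\bigr) = f_{\R}(x)$ is noted.

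For the forward direction, I would assume $\re f = 0$. By Theorem \ref{thm:Hom kelie left,bi}, $\ext(f_{\R}|_{\re M}) = \re f = 0$. Applying the inverse bijection $\re^{*}$ from Lemma \ref{lem:ext} and using $\re^{*} \circ \ext = \mathrm{id}$ yields $f_{\R}|_{\re M} = 0$, which is precisely the statement that $\re f(x) = 0$ for every $x \in \re M$.

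For the reverse direction, suppose $f_{\R}(x) = 0$ for all $x \in \re M$, so that $f_{\R}|_{\re M}$ is the zero map in $\Hom_{\R}(\re M, M')$. Then $\ext(f_{\R}|_{\re M}) = \ext(0) = 0$, and invoking \eqref{eq:re f=ext( fR| reM)} once more gives $\re f = 0$.

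There is no real obstacle here; the corollary is essentially a restatement of Theorem \ref{thm:Hom kelie left,bi} packaged through the bijectivity of $\ext$. The only point that needs a moment of care is the parsing of the symbol $\re f(x)$: it means $\re(f(x)) = f_{\R}(x)$, not the evaluation of the $\O$-linear map $\re f$ at $x$. (For the latter reading one would instead appeal to Corollary \ref{lem: f(huaa M)=0 yields f=0} applied to $\re f \in \Hom_{\O}(M,M') \subseteq \Hom_{\mathcal{LO}}(M,M')$, which also closes the argument.)
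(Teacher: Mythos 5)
Your proof is correct and takes essentially the same route as the paper's: both reduce the claim to the identity $\re f=\ext (f_{\R}|_{\re M})$ from Theorem \ref{thm:Hom kelie left,bi} together with the injectivity of the extension map (the paper invokes Corollary \ref{lem: f(huaa M)=0 yields f=0} for this, you invoke $\re^{*}\circ\ext=\mathrm{id}$ from Lemma \ref{lem:ext}, which is the same fact). Your parenthetical on parsing $\re f(x)$ as $\re\bigl(f(x)\bigr)$ is also the intended reading.
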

\begin{proof}
	The condition in the right hand side above is equivalent to
	\begin{eqnarray}\label{eq:proof ref =0}
	f_{\R}|_{\re M}=0.
	\end{eqnarray}
	%$$(f_{\R}|_{\re M})=0.$$
	It follows  from  Corollary \ref{lem: f(huaa M)=0 yields f=0} and \eqref{eq:re f=ext( fR| reM)}  that the condition $\re f=0$ is also equivalent to \eqref{eq:proof ref =0}. This completes the proof.
 \end{proof}

\subsection{Regular composition of $\spo$-\almost linear maps}
Since  the \almost linearity does not preserve under the ordinary   composition of two \almost linear maps,  we need to introduce a new   composition.

In this subsection, we always assume that 	 $M,\ M'$ are two left $\spo$-modules and $\ M''$ is an $\spo$-bimodule.

\begin{mydef}\label{def:regular composition}
	Let  $f\in\Hom_{\spr}(M',M'')$ and  $g\in  \Hom_{\spr}(M,M')$.
	The \textbf{(left) regular composition} of $f$ and $g$ is defined as
	$$f\circledcirc_l g(x):=f\circ g(x)+[f,g,x],$$ where
	\begin{eqnarray}\label{def:[fgx]}
	[f,g,x]:=-\sum_{j=1}^7 e_j\re \big(f(A_{e_j}(x,g))\big).
	\end{eqnarray}
	% $$  [f,g,x]:=-\sum_{j=1}^7 e_j\re \big(f(A_{e_j}(x,g))\big).$$
	
\end{mydef}
\begin{rem}
	We shall just denote $f\circledcirc_l g=f\circledcirc g$ if there is no confusion.
	By direct calculation, we have an identity about associators concerning five terms:
	\begin{equation}\label{eq:Ap(x,f odot g)=Ap(g(x),f)+f(Ap(x,g))+[f,g,px]-p[f,g,x]}
	A_p(x,f\circledcirc g)=A_p(g(x),f)+f(A_p(x,g))+[f,g,px]-p[f,g,x].
	\end{equation}
	We shall show in		Proposition \ref{prop:f.g in Hom(M,M')} that the \almost linearity preserves under the regular compositions. This fact justifies our   Definition \ref{def:regular composition}. The point is that the regular composition is non-associative in contrast to the classical composition. This new composition plays a central role in the theory of  non-associative categories.
\end{rem}
We give a sufficient  condition for  the regular composition becoming  the ordinary  composition.
\begin{lemma}\label{lem:vanishing[f,g,x]=0}
	Let  $f\in\Hom_{\spr}(M',M'')$ and  $g\in  \Hom_{\spr}(M,M')$. Suppose   one of the following conditions holds
	\begin{enumerate}
		\item $g\in \Hom_{\spo}(M,M')$;
		\item $M'$ is an $\O$-bimodule, $x\in \re{M}$ and  $g\in \Hom_\mathcal{LO}(M,M')$;
		\item $M'$ is an $\O$-bimodule, $f\in  \Hom_{\spo}(M',M'')$ and  $g\in \Hom_\mathcal{LO}(M,M')$.

	\end{enumerate}
	We then  have $$[f,g,x]=0$$ for all $x\in M$ and	
	$$ f\circledcirc g=f\circ g.$$	
\end{lemma}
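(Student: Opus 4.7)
In all three cases the conclusion $f\circledcirc g=f\circ g$ is immediate from $[f,g,x]=0$, so by the defining formula
\[
[f,g,x]=-\sum_{j=1}^7 e_j\,\re\bigl(f(A_{e_j}(x,g))\bigr),
\]
everything reduces to verifying that $\re\bigl(f(A_{e_j}(x,g))\bigr)=0$ for each $j$. I plan to treat the three hypotheses separately: in the first two cases I will actually produce the stronger pointwise vanishing $A_{e_j}(x,g)=0$, whereas the third case needs a genuinely weaker argument based on a real-part cancellation.

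For case (i), $g\in\Hom_{\spo}(M,M')$ is $\O$-linear, so by the very definition of the second associator $A_p(x,g)=g(px)-pg(x)=0$ for every $p\in\O$ and every $x\in M$; then $\R$-linearity of $f$ forces $f(A_{e_j}(x,g))=f(0)=0$ and the sum is zero term by term. For case (ii), the hypothesis $x\in\re M=\huaa{M}$ puts $x$ among the associative elements, and $g$ is $\O$-para-linear, so a\asertion{1} of Corollary \ref{cor: A_p(x,f)=0 and f(px)=pf(x)} applied to $g$ yields $A_p(x,g)=0$ for all $p\in\O$. The same $\R$-linearity argument then finishes this case.

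For case (iii), the associator $A_{e_j}(x,g)$ need not vanish outright; instead I plan to use the weaker identity $\re A_{e_j}(x,g)=0$ guaranteed by the para-linearity of $g$ in Definition \ref{def:\almost linear}. Since $f\in\Hom_{\spo}(M',M'')$ is $\O$-linear between two $\O$-bimodules (the target is a bimodule by the standing convention of the subsection, and $M'$ is a bimodule by hypothesis), Lemma \ref{lem:f re=re f} lets me commute $f$ past the real part operator, giving
\[
\re\bigl(f(A_{e_j}(x,g))\bigr)=f\bigl(\re A_{e_j}(x,g)\bigr)=f(0)=0,
\]
so the sum again vanishes identically.

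The main (and essentially only) subtle point is case (iii): one must recognise that although the second associator itself does not vanish in general, its real part does, and this real part can be pushed through the $\O$-linear map $f$ using Lemma \ref{lem:f re=re f}. The other two cases collapse to the definition of $\O$-linearity and to a\asertion{1} of Corollary \ref{cor: A_p(x,f)=0 and f(px)=pf(x)} respectively, with no further work needed.
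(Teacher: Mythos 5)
Your proposal is correct and follows essentially the same route as the paper: case (i) by definition of $\O$-linearity, case (ii) via a\asertion{1} of Corollary \ref{cor: A_p(x,f)=0 and f(px)=pf(x)}, and case (iii) by combining $\re A_{e_j}(x,g)=0$ with Lemma \ref{lem:f re=re f} to push $f$ past the real part operator. No gaps.
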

\begin{proof}
	It suffices to show that $[f,g,x]=0$ for all $x\in M$ in each case.
	
	If  $g\in \Hom_{\spo}(M,M')$ then $[f,g,x]=0$ due to the fact that $A_{e_j}(x,g)=0$. % Hence $f\circledcirc g(x)=f\circ g(x)$.
	
	Let $M'$ be an $\O$-bimodule. If $x\in \re{M}$ and  $g\in \Hom_\mathcal{LO}(M,M')$, by Corollary \ref{cor: A_p(x,f)=0 and f(px)=pf(x)} we conclude $$A_{e_j}(x,g)=0.$$ This yields   $[f,g,x]=0$.
	
	If $f\in  \Hom_{\spo}(M',M'')$ and  $g\in \Hom_\mathcal{LO}(M,M')$, we then conclude from Lemma \ref{lem:f re=re f} that
	$$\re f(A_{e_j}(x,g))=f(\re A_{e_j}(x,g))$$ and from   Definition \ref{def:\almost linear} that
	$$\re A_{e_j}(x,g)=0.$$
	Therefore,   $$\re f(A_{e_j}(x,g))=f(\re A_{e_j}(x,g))=0.$$ Thus by \eqref{def:[fgx]}, we have  $[f,g,x]=0$. This proves the lemma.
\end{proof}

\begin{prop}\label{prop:f.g in Hom(M,M')}
	If  $f\in  \Hom_\mathcal{LO}(M',M'')$ and  $g\in \Hom_{\spr}(M,M')$, then	$$f\circledcirc g\in \Hom_\mathcal{LO}(M,M'').$$
\end{prop}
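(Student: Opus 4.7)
The plan is to verify the defining condition $\re A_p(x, f\circledcirc g)=0$ directly for all $p\in\O$ and $x\in M$, using the five-term identity \eqref{eq:Ap(x,f odot g)=Ap(g(x),f)+f(Ap(x,g))+[f,g,px]-p[f,g,x]} recorded in the preceding remark. Taking real parts, the first summand $\re A_p(g(x),f)$ vanishes immediately because $f\in\Hom_\mathcal{LO}(M',M'')$, so the task reduces to showing that
\[\re f(A_p(x,g)) + \re[f,g,px] - \re\bigl(p[f,g,x]\bigr)=0.\]

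I would then set $r_j:=\re f(A_{e_j}(x,g))$ and $s_j:=\re f(A_{e_j}(px,g))$ for $j=1,\dots,7$; by \eqref{eq:reM=huaaM=ZM} both lie in $\huaa{M''}=\re M''$, and by definition \eqref{def:[fgx]}
\[[f,g,x]=-\sum_{j=1}^{7}e_j r_j,\qquad [f,g,px]=-\sum_{j=1}^{7}e_j s_j.\]
For the middle term, \eqref{eq:real part on good bimod } of Theorem \ref{lem:real part on good bimod } lets me pull the nuclear factor $s_j$ out of the real part, giving $\re(e_j s_j)=\re(e_j)\,s_j=0$ for $j\ge 1$, hence $\re[f,g,px]=0$. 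For the third term, the key observation is that $r_j$ is an \asselm{} of $M''$, so $[p,e_j,r_j]=0$ forces $p(e_j r_j)=(pe_j)r_j$; applying $\re$ and using \eqref{eq:real part on good bimod } once more yields
\[\re\bigl(p[f,g,x]\bigr)=-\sum_{j=1}^{7}\re(pe_j)\,r_j.\]

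For the remaining term I would exploit the $\R$-linearity of the map $p\mapsto A_p(x,g)$: decomposing $p=p_0+\sum_{i=1}^{7}p_i e_i$ and noting $A_{e_0}(x,g)=0$, the $\R$-linearity of $f$ and of $\re$ gives $\re f(A_p(x,g))=\sum_{i=1}^{7}p_i r_i$. A short computation using $e_ie_j=\epsilon_{ijk}e_k-\delta_{ij}$ produces $\re(pe_j)=-p_j$ for $j=1,\dots,7$, so the two surviving contributions become $\sum_{i=1}^{7} p_i r_i$ and $-\sum_{j=1}^{7} p_j r_j$ respectively, and cancel. This will deliver $\re A_p(x,f\circledcirc g)=0$, proving the proposition.

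The main obstacle is careful bookkeeping rather than any deep difficulty: everything hinges on the fact that the projections $\re f(\cdot)$ land in the nucleus of $M''$, which both allows the reassociation $p(e_j r_j)=(pe_j)r_j$ and permits \eqref{eq:real part on good bimod } to move real scalars across $r_j$. Without the bimodule hypothesis on $M''$ the real part operator would not even be defined, so neither of these two manipulations would be available.
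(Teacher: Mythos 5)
Your proposal is correct and follows essentially the same route as the paper: both start from the five-term identity \eqref{eq:Ap(x,f odot g)=Ap(g(x),f)+f(Ap(x,g))+[f,g,px]-p[f,g,x]}, discard $\re A_p(g(x),f)$ and $\re[f,g,px]$, and then cancel $\re f(A_p(x,g))$ against $\re(p[f,g,x])$ using the real-part identity \eqref{eq:real part on good bimod }. The only cosmetic difference is that the paper reduces to $p=e_k$ by linearity at the outset, whereas you carry a general $p=p_0+\sum p_ie_i$ through the computation; the underlying cancellation is identical.
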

\begin{proof}
	We need to show   that $$\re A_p(x,f\circledcirc g)=0$$ for all $p\in \O$ and $x\in M$.
	In view of identity \eqref{eq:Ap(x,f odot g)=Ap(g(x),f)+f(Ap(x,g))+[f,g,px]-p[f,g,x]}, it is sufficient  to prove that $$\re(f(A_p(x,g))-p[f,g,x])=0.$$
		\bfs \ $p=e_k$.
	In view of  \eqref{lem:real part on good bimod },
 	\begin{align*}
	\re\big(f(A_{e_k}(x,g))-e_k[f,g,x]\big)
	=\re f(A_{e_k}(x,g))+\re\Bigl( e_k\sum_{j=1}^7 e_j\re \big(f(A_{e_j}(x,g))\big)\Bigl)
	=0.	
	\end{align*}	
	This completes the proof.
\end{proof}

{ For  later use, we give some relations among various associators.
}

\begin{lemma}\label{lem:[fgh]=}
	{	Let $M,M',M'',M'''$ be  $\O$-bimodules and 		
		let  $$h\in \Hom_\mathcal{LO}(M,M'), \quad g\in \Hom_\mathcal{LO}(M',M''), \quad f\in \Hom_\mathcal{LO}(M'',M''').$$
		Then for all $x\in M$, we have
		\begin{align}\label{eq:[f,g,h]=}
		[f,g,h](x)+f[g,h,x]=[f,g,h(x)]-[f,g\circledcirc h,x]+[f\circledcirc g,h,x ],
		\end{align}
		%for all $f,g,h\in l\text{-}\End(M)$,
		where $$[f,g,h]:=(f\circledcirc g)\circledcirc h-f\circledcirc (g\circledcirc h).$$}
	
	{	Moreover, we have the following properties.
		\begin{itemize}
			\item Let $f,g,h$ be as above. Then 	$$\re[f,g,h]=0;$$
			\item 	If   one of $f, g, h$  is $\O$-linear, then  $$[f,g,h]=0.$$
		\end{itemize}
	}
	
\end{lemma}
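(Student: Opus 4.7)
The plan is to establish the main identity \eqref{eq:[f,g,h]=} by a direct unfolding of both iterated regular compositions at a point $x$, and then to deduce the two structural consequences from it.

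Applying the definition $(u \circledcirc v)(y) = u(v(y)) + [u, v, y]$ twice and using $\R$-linearity of the outer map, I would write
\begin{align*}
((f \circledcirc g) \circledcirc h)(x) &= (f \circledcirc g)(h(x)) + [f \circledcirc g, h, x]\\
&= f(g(h(x))) + [f, g, h(x)] + [f \circledcirc g, h, x],\\
(f \circledcirc (g \circledcirc h))(x) &= f((g \circledcirc h)(x)) + [f, g \circledcirc h, x]\\
&= f(g(h(x))) + f([g, h, x]) + [f, g \circledcirc h, x].
\end{align*}
Subtracting the second from the first yields \eqref{eq:[f,g,h]=} at once.

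For the assertion $\re [f, g, h] = 0$, Proposition \ref{prop:f.g in Hom(M,M')} shows that $(f \circledcirc g) \circledcirc h$ and $f \circledcirc (g \circledcirc h)$ both lie in $\Hom_\mathcal{LO}(M, M''')$, hence so does $[f, g, h]$. By Corollary \ref{cor:f in im (U) = f(x) in im (M)} it suffices to verify $\re [f, g, h](x) = 0$ for $x \in \re M$. For such $x$, Corollary \ref{cor: A_p(x,f)=0 and f(px)=pf(x)} yields $A_{e_j}(x, h) = 0$, $A_{e_j}(x, g \circledcirc h) = 0$, and $A_{e_j}(x, f \circledcirc g) = 0$, so the three brackets $[g, h, x]$, $[f, g \circledcirc h, x]$, and $[f \circledcirc g, h, x]$ all vanish; the identity collapses to $[f, g, h](x) = [f, g, h(x)]$. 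The right-hand side belongs to $\bigoplus_{j=1}^{7} e_j \re M'''$ by the definition \eqref{def:[fgx]}, which has zero real part in the decomposition \eqref{eq:M=sum ei ReM}.

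For the final assertion I split into three cases, using Lemma \ref{lem:vanishing[f,g,x]=0} to reduce the appropriate regular compositions to ordinary ones. When $h$ is $\O$-linear, $h(e_j x) = e_j h(x)$ gives $A_{e_j}(x, g \circ h) = A_{e_j}(h(x), g)$, so $[f, g \circ h, x] = [f, g, h(x)]$ and the two triple compositions agree. When $g$ is $\O$-linear, the expansion $A_{e_j}(x, g \circ h) = g(A_{e_j}(x, h)) + A_{e_j}(h(x), g)$ combined with $A_{e_j}(h(x), g) = 0$ produces the same cancellation. When $f$ is $\O$-linear, Lemma \ref{lem:f re=re f} lets $f$ pass through $\re$, and $\O$-linearity lets it pass through the scalars $e_j$, giving $f([g, h, x]) = [f \circ g, h, x]$ directly. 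The main obstacle is notational bookkeeping; the key structural input, beyond the definitions themselves, is the observation that every bracket $[\cdot, \cdot, \cdot]$ lies in the imaginary subspace $\bigoplus_{j=1}^{7} e_j \re M'''$, which makes the real-part vanishing immediate.
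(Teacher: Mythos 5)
Your proposal is correct and follows essentially the same route as the paper: direct expansion of the two iterated compositions for \eqref{eq:[f,g,h]=}, reduction to $x\in\re M$ via Corollary \ref{cor:f in im (U) = f(x) in im (M)} together with the observation that every bracket $[\cdot,\cdot,\cdot]$ is purely imaginary for the real-part claim, and Lemma \ref{lem:vanishing[f,g,x]=0} plus Lemma \ref{lem:f re=re f} for the $\O$-linear cases (indeed you are more complete than the paper, which only writes out the case where $f$ is $\O$-linear). One cosmetic slip: the vanishing of $[f\circledcirc g,h,x]$ for $x\in\re M$ follows from $A_{e_j}(x,h)=0$, not from $A_{e_j}(x,f\circledcirc g)=0$ as you state, but the needed fact is already among those you list, so nothing breaks.
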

 \begin{proof}
	Identity  \eqref{eq:[f,g,h]=} follows by direct calculation.
	Combining \eqref{eq:[f,g,h]=} with a\asertion{2} of  Lemma \ref{lem:vanishing[f,g,x]=0}, we  deduce that
	$$\re ([f,g,h](x))=\re [f,g,h(x)]=0 \text{ for all }x\in \re{M}.$$
	It  then follows from Corollary \ref{cor:f in im (U) = f(x) in im (M)} that   $\re[f,g,h]=0$.

	Suppose  $f$ is an $\O$-linear map.
		For any $x\in M$, by identity  \eqref{eq:[f,g,h]=} and a\asertion{3} of Lemma \ref{lem:vanishing[f,g,x]=0}, we have
	\begin{eqnarray}\label{pfeq:[fgh](x)}
	[f,g,h](x)=[f\circ g,h,x]-f([g,h,x]).
	\end{eqnarray}
		Then by definition \eqref{def:[fgx]}, the right hand side above  becomes
	$$\sum_{i=1}^7 e_i\re f\circ g(A_{e_i}(x,h))-f\left(\sum_{i=1}^7 e_i\re  g(A_{e_i}(x,h))\right).$$
	In view of a\asertion{1} of Corollary \ref{cor: A_p(x,f)=0 and f(px)=pf(x)}, the second term above becomes
	\begin{eqnarray}\label{eqpf:fsumgx}
	f\left(\sum_{i=1}^7 e_i\re  g(A_{e_i}(x,h))\right)=\sum_{i=1}^7 e_if(\re  g(A_{e_i}(x,h))).
	\end{eqnarray}
	%$$f\left(\sum_{i=1}^7 e_i\re  g(A_{e_i}(x,h))\right)=\sum_{i=1}^7 e_if(\re  g(A_{e_i}(x,h))).$$
	By Lemma \ref{lem:f re=re f}, we conclude
	\begin{eqnarray}\label{eqpf:sumfg}
	\sum_{i=1}^7 e_i\re f\circ g(A_{e_i}(x,h))=\sum_{i=1}^7 e_i f(\re g(A_{e_i}(x,h))).
	\end{eqnarray}
	Combining \eqref{pfeq:[fgh](x)}, \eqref{eqpf:fsumgx}
	and \eqref{eqpf:sumfg}, we obtain	 $$[f,g,h](x)=0.$$
	
	We have proved the  identity above when $f$ is $\O$-linear.
	 \end{proof}

Finally, we establish some properties of the right multiplication operators in an $\O$-bimodule $M$
\begin{eqnarray*}
R_p:M&\to& M\\
x&\mapsto &xp
\end{eqnarray*}
 for the use in the sequel.
\begin{lemma}\label{lem:Rp prop1}
	Let  $M$ be a left $\spo$-module and  $M'$  an $\spo$-bimodule. Then for all $p\in \spo$, $ x\in M$ and $ f\in \Hom_\mathcal{LO}(M,M')$, we have
	\begin{enumerate}
		\item $[R_p,f,x]=-A_p(x,f)$;
		\item  $R_p\circledcirc f=f\odot p$.

	\end{enumerate}
	
\end{lemma}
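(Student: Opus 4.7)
My plan is to prove (i) first and obtain (ii) as an immediate corollary by unwinding definitions. Unpacking \eqref{def:[fgx]} with $R_p$ in place of the outer map gives
\[
[R_p,f,x] = -\sum_{j=1}^7 e_j\,\re\bigl(R_p(A_{e_j}(x,f))\bigr) = -\sum_{j=1}^7 e_j\,\re\bigl(A_{e_j}(x,f)\,p\bigr).
\]
Since $M'$ is an $\O$-bimodule and $f\in\Hom_\mathcal{LO}(M,M')$, I would invoke the symmetry identity \eqref{eq:Omod left bi Re(Ap(xf)q=-Re(Aq(xf)p))} from Proposition \ref{prop: left,bimod associator } to swap $e_j$ and $p$ inside the real part, at the cost of a sign, so the sum rewrites as
\[
[R_p,f,x] = \sum_{j=1}^7 e_j\,\re\bigl(A_p(x,f)\,e_j\bigr).
\]

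The remaining step is to recognize the right-hand side as a polarization-type reconstruction of $-A_p(x,f)$. Setting $y:=A_p(x,f)$, para-linearity forces $\re y=0$, so the reconstruction identity of a\asertion{3} of Theorem \ref{lem:real part on good bimod } collapses from $y = \sum_{i=0}^7 e_i\re(\overline{e_i}y)$ to $y = \sum_{i=1}^7 e_i\re(\overline{e_i}y)$. Using $\overline{e_i}=-e_i$ together with $\re(e_i y)=\re(y e_i)$ (a consequence of \eqref{prop:re part}), this becomes $y = -\sum_{i=1}^7 e_i\,\re(y\,e_i)$. Comparing with the previous display yields $[R_p,f,x]=-A_p(x,f)$, establishing (i). Assertion (ii) then follows at once from Definition \ref{def:regular composition}: $(R_p\circledcirc f)(x) = R_p(f(x)) + [R_p,f,x] = f(x)p - A_p(x,f)$, which is exactly $(f\odot p)(x)$ by \eqref{eqdef:<x,fr>}.

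The only genuinely delicate point I expect is keeping track of the two sign flips in the polarization step---one from swapping $e_j\leftrightarrow p$ inside the real part, and one from $\overline{e_i}=-e_i$ in the reconstruction formula---and verifying that the $\re y=0$ hypothesis is precisely what allows the $i=0$ term to drop. Everything else is formal substitution into identities already established in the excerpt.
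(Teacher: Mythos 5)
Your proposal is correct and follows essentially the same route as the paper: unpack $[R_p,f,x]$ from \eqref{def:[fgx]}, apply the antisymmetry identity \eqref{eq:Omod left bi Re(Ap(xf)q=-Re(Aq(xf)p))} to swap $e_j$ and $p$, and recognize the resulting sum as $-A_p(x,f)$, with (ii) then immediate from Definition \ref{def:regular composition} and \eqref{eqdef:<x,fr>}. The only difference is that you spell out the final polarization step (via a\asertion{3} of Theorem \ref{lem:real part on good bimod } and the vanishing of $\re A_p(x,f)$), which the paper leaves implicit; that justification is valid.
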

\begin{proof}
	We first prove a\asertion{1}.  From   \eqref{eq:Omod left bi Re(Ap(xf)q=-Re(Aq(xf)p))}, we have
	\begin{align*}
	[R_p,f,x]&=-\sum_{j=1}^7 e_j\re \big(A_{e_j}(x,f)p\big)\\
	&=\sum_{j=1}^7 e_j\re \big(A_{p}(x,f)e_j\big)\\
	&=-A_{p}(x,f).
	\end{align*}	
	Now we prove a\asertion{2}. By definition \eqref{eqdef:<x,fr>}, we get
	\begin{align*}
	R_p\circledcirc f(x)=f(x)p+[R_p,f,x]
	=f(x)p-A_{p}(x,f)
	=\fx{x}{f\odot p}.
	\end{align*}
	This completes the proof.
	% A\asertion{3} follows from the fact $pI=R_p$.
\end{proof}
\begin{lemma}\label{lem:Rp prop 2}
	Let  $M$, $M'$ be two  $\spo$-bimodules. Then for all $p\in \spo,\, x\in M$ and $ f\in \Hom_\mathcal{LO}(M,M')$, we have
	\begin{enumerate}
		\item $[f,R_p,x]=A_p(x,f)$;
		\item  $f\circledcirc R_p=p\odot
		f$.

	\end{enumerate}
\end{lemma}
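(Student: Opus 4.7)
The plan is to mimic the argument used for Lemma~\ref{lem:Rp prop1}, with two adjustments dictated by the hypothesis that $M$ is now an $\O$-bimodule (rather than merely a left module). Namely, I would replace identity~\eqref{eq:Omod left bi Re(Ap(xf)q=-Re(Aq(xf)p))} by the ``$\mathrm{bimod},\mathrm{left}$'' identity~\eqref{eq:Omod bi,left, Re f([p,x,q])=Re pA_q(x,f)} of Proposition~\ref{prop:bimod,left,associator}, which is precisely the identity that requires a bimodule structure on the source, and I would use the bimodule-valued right multiplication~\eqref{eqdef:<x,rf>} instead of~\eqref{eqdef:<x,fr>} to recognize the outcome.

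For assertion~(i), I would unwind Definition~\ref{def:regular composition}. Since $R_p(y)=yp$, a direct computation of the second associator gives
$$A_{e_j}(x,R_p)=(e_jx)p-e_j(xp)=[e_j,x,p],$$
so that by~\eqref{def:[fgx]},
$$[f,R_p,x]=-\sum_{j=1}^7 e_j\,\re\bigl(f([e_j,x,p])\bigr).$$
Now I apply Proposition~\ref{prop:bimod,left,associator} to rewrite each real part on the right as $\re(e_j A_p(x,f))$, reducing (i) to the identity
$$\sum_{j=1}^7 e_j\,\re(e_j A_p(x,f))=-A_p(x,f).$$

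This last identity is where the para-linearity of $f$ enters crucially: it forces $\re A_p(x,f)=0$, so expanding $A_p(x,f)=\sum_{i=1}^7 e_i a_i$ with $a_i\in\re M'$ (via the decomposition~\eqref{eq:M=sum ei ReM}), and using $\re(e_j e_i)=-\delta_{ij}$ together with~\eqref{eq:real part on good bimod } to pull the scalar $a_i$ out of the real part, one gets $\re(e_j A_p(x,f))=-a_j$ in a single line. Summing yields the desired equality and hence $[f,R_p,x]=A_p(x,f)$.

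Assertion~(ii) is then immediate: by Definition~\ref{def:regular composition} and~(i),
$$(f\circledcirc R_p)(x)=f(xp)+[f,R_p,x]=f(xp)+A_p(x,f),$$
which is exactly the definition~\eqref{eqdef:<x,rf>} of $\langle x,p\odot f\rangle$. I do not anticipate a genuine obstacle; the only subtle point is the sign bookkeeping, and the whole content of the lemma is that the sign produced by Proposition~\ref{prop:bimod,left,associator} exactly cancels the sign in~\eqref{def:[fgx]} once the polarization identity for $\re(e_j\,\cdot\,)$ on the para-linear element $A_p(x,f)$ has been invoked.
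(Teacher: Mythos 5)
Your proposal is correct and follows essentially the same route as the paper: compute $A_{e_j}(x,R_p)=[e_j,x,p]$, invoke identity \eqref{eq:Omod bi,left, Re f([p,x,q])=Re pA_q(x,f)} to reduce to $-\sum_j e_j\re(e_jA_p(x,f))=A_p(x,f)$, and then read off (ii) from definition \eqref{eqdef:<x,rf>}. The only difference is that you spell out the final polarization step explicitly, which the paper leaves implicit.
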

\begin{proof}
	We first prove a\asertion{1}. Notice that  $$A_{e_j}(x,R_p)=[e_j,x,p]$$for each $j=1,\dots,7$.
	Combining this with \eqref{eq:Omod bi,left, Re f([p,x,q])=Re pA_q(x,f)}, we get
	\begin{align*}
	[f,R_p,x]&=-\sum_{j=1}^7 e_j\re \big( f(A_{e_j}(x,R_p))\big)\\
	&=-\sum_{j=1}^7 e_j\re \big(f([e_j,x,p])\big)\\
	&=-\sum_{j=1}^7 e_j\re \big(e_jA_{p}(x,f)\big)\\
	&=A_{p}(x,f).
	\end{align*}
	
	We come to prove a\asertion{2}. By definition \eqref{eqdef:<x,rf>}, we have
	\begin{align*}
	f\circledcirc R_p=f(xp)+[f,R_p,x]
	=f(xp)+A_{p}(x,f)
	=\fx{x}{p\odot
		f}.
	\end{align*}
	This finishes the proof.	% A\asertion{3} follows from  $ pI=R_p$.
\end{proof}
 \subsection{Right \almost linear maps}\label{sec:right O mod}
The results of right \almost linear maps are all parallel with the left cases. Given a right $\O$-module, we can endow it with a natural left  $\O$-module structure so that we can interpret a    right \almost linear map as a left \almost linear map. Thus we can establish these results    via duality.

Let $M$ be a right $\O$-module.
It induces a left scalar multiplication on $M$ as follows:
$$p\cdot x:=x\overline{p},\quad\text{for all $x\in M$, $p\in \O$}.$$
One can check that this is indeed a left $\O$-module structure on $M$.
Similarly, a left $\O$-module also can induce a right $\O$-module structure. And  an $\O$-bimodule  can induce a new $\O$-bimodule structure.
 \textbf{The induced module is denoted by  $M^{C}$ for all cases.}

Let  $M$ and $N$ be two   right $\O$-modules. For any	
$$f\in \Hom_\R(M,N),$$   we can regard $f$ as a map between the left $\O$-modules $M^{C}$ and $N^{C}$, denoted by
$$f^{C}\in \Hom_\R(M^{C}, N^{C}).$$
Moreover, their associators     have the following relation
$$B_p(f,x)=A_p(x,f^{C})$$ for any $x\in M$ and $p\in \O$.	
This can be checked  by direct calculation
\begin{align*}
f^{C}(p\cdot x)
= f(x\overline{p})
= f(x)\overline{p}-B_{\overline{p}}(f,x)
=p\cdot f^{C}(x)+B_p(f,x).
\end{align*}

\bigskip

Now we can state the dual version of  Theorem \ref{thm: f in Hom(M,M')  equivalent:}.

\begin{thm}\label{thm: f in right-Hom(M,M')  equivalent:}
	Let $M$ be a right $\spo$-module and $M'$ be  an $\O$-bimodule.
	Suppose  that  $f\in \Hom_{\spr}(M,M')$ and write $$f(x)=f_{\R}(x)+\sum_{i=1}^7f_i(x)e_i.$$ Then the following are equivalent:
	\begin{enumerate}
		\item  $f\in  \Hom_\mathcal{RO}(M,M')$;
		\item $f_i(x)=-f_{\R}(xe_i),\quad i=1,\dots ,7$;
		\item $B_p(f,x)=\sum_{i=1}^7 f_{\R}([x,p,e_i])e_i$.
	\end{enumerate}
\end{thm}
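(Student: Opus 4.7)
The plan is to follow exactly the strategy used in the proof of Theorem \ref{thm: f in Hom(M,M')  equivalent:}, with every scalar multiplication reflected from left to right, establishing the cycle (i) $\Rightarrow$ (ii) $\Rightarrow$ (iii) $\Rightarrow$ (i). A secondary option would be to proceed by duality: the conjugate map $f^{C}\in\Hom_{\R}(M^{C},(M')^{C})$ between the induced left module $M^{C}$ and the left-module view $(M')^{C}$ of the bimodule $M'$ satisfies $B_{p}(f,x)=A_{p}(x,f^{C})$, and each of the three conditions here translates under the $C$-operation into the corresponding condition of Theorem \ref{thm: f in Hom(M,M')  equivalent:} applied to $f^{C}$, with the only bookkeeping being the signs from $\overline{e_{i}}=-e_{i}$ and the induced left action $p\cdot y=y\overline{p}$.

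For (i) $\Rightarrow$ (ii), I would specialize $\re B_{p}(f,x)=0$ to $p=e_{i}$, rewrite $f(xe_{i})=f(x)e_{i}-B_{e_{i}}(f,x)$, take real parts, and expand $f(x)=f_{\R}(x)+\sum_{j=1}^{7} f_{j}(x)e_{j}$. The identity $\re(e_{j}e_{i})=-\delta_{ji}$ for $i,j\geq 1$, applied via \eqref{eq:real part on good bimod } of Theorem \ref{lem:real part on good bimod } (valid because each $f_{j}(x)\in\re M'$), isolates $f_{\R}(xe_{i})=-f_{i}(x)$ as required.

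For (ii) $\Rightarrow$ (iii), it suffices to treat $p=e_{k}$. I would expand both $f(x)e_{k}$ and $f(xe_{k})$ in terms of the basis decomposition of $f$, substitute $f_{j}(y)=-f_{\R}(ye_{j})$ from (ii), and rewrite every product $e_{j}e_{k}=\epsilon_{jkl}e_{l}-\delta_{jk}$. After regrouping, the two $f_{\R}(xe_{k})$ terms arising from the $\delta_{jk}$ pieces cancel, and the total skew-symmetry of $\epsilon_{jkl}$ lets the remaining sum collapse into $\sum_{i=1}^{7} f_{\R}([x,e_{k},e_{i}])e_{i}$, matching (iii). Finally, for (iii) $\Rightarrow$ (i), take real parts of (iii): every $[x,p,e_{i}]\in M$ has $\re[x,p,e_{i}]=0$ by \eqref{prop:re part} so each $f_{\R}([x,p,e_{i}])\in\re M'$, and because it is multiplied on the right by $e_{i}$ with $i\geq 1$, each summand has vanishing real part; hence $\re B_{p}(f,x)=0$, i.e., $f\in\Hom_{\mathcal{RO}}(M,M')$.

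The only step that requires real calculation is (ii) $\Rightarrow$ (iii), and the main obstacle there is the bookkeeping: the $e_{i}$'s must now sit on the right of the real scalars, and one has to carefully align the $\epsilon_{jkl}$ coming from $f(x)e_{k}$ with the $\epsilon_{kil}$ coming from $x(e_{k}e_{i})$ in $[x,e_{k},e_{i}]$, using the cyclic identity $\epsilon_{klj}=\epsilon_{jkl}$. This is the mirror image of the computation already carried out in Theorem \ref{thm: f in Hom(M,M')  equivalent:}, so once the sign and side conventions are set up, the argument goes through verbatim.
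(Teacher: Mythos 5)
Your proposal is correct, and I verified the key computation: expanding $B_{e_k}(f,x)$ with $f_j(x)=-f_{\R}(xe_j)$ and comparing with $\sum_i f_{\R}([x,e_k,e_i])e_i$ does work out, the matching of the two $\epsilon$-sums coming from the cyclic identity $\epsilon_{lki}=\epsilon_{kil}$ exactly as you describe. The paper itself gives no written proof of this theorem: it simply states it as ``the dual version'' of Theorem \ref{thm: f in Hom(M,M')  equivalent:}, relying on the conjugate-module machinery and the identity $B_p(f,x)=A_p(x,f^{C})$ set up immediately beforehand. So the paper's intended route is your secondary (duality) option, while your primary route redoes the left-handed computation in mirror image. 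The duality route is shorter once one checks the bookkeeping that the $i$-th component of $f$ (with $e_i$ on the right) corresponds to \emph{minus} the $i$-th component of $f^{C}$ (with $e_i$ on the left, since $e_i\cdot y=y\overline{e_i}=-ye_i$ in $(M')^{C}$); your direct route costs a page of $\epsilon$-manipulation but is self-contained and makes the sign conventions explicit. One small inaccuracy in your (iii)$\Rightarrow$(i) step: you justify $f_{\R}([x,p,e_i])\in\re M'$ by appealing to $\re[x,p,e_i]=0$ via \eqref{prop:re part}, but $M$ is only assumed to be a right $\O$-module, so $\re$ need not be defined on $M$ at all; the correct and simpler justification is that $f_{\R}=\re\circ f$ lands in $\re M'$ by definition, after which $\re\bigl(f_{\R}(\cdot)\,e_i\bigr)=\re(e_i)\,f_{\R}(\cdot)=0$ for $i\geq 1$ by \eqref{eq:real part on good bimod }. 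This is a harmless slip, not a gap.
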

{Similarly, we can also define     the related bijections of right  \almost linear lift and  right \almost linear extension}.
 For a real linear map $f\in \Hom_\R(M,N)$, which can be viewed as $f^{C}\in \Hom_\R(M^{C}, N^{C})$ as well, it follows from definitions \eqref{eqdef:<x,fr>} and \eqref{eqdef:<rf,x>} that
$$\fx{x}{f^{C}\odot
	r}=f^{C}(x)r-A_r(x,f^{C})=\overline{r}\cdot f(x)+B_{\overline{r}}(f,x)=\fx{\overline{r} \odot
	f}{x}.$$
Hence  it holds $$f^{C}\odot r=\overline{r}\odot  f.$$
 Similarly, we also have $$r\odot f^{C}=f \odot \overline{r}.$$ Therefore, we have the following dual versions of   Theorems \ref{Thm:left, bimod Hom(M,M')},  \ref{thm:Hom bimod} and   \ref{thm:Hom kelie left,bi}.

%of formal $\O$-scalar multiplication of a real linear map $f$
\begin{thm}\label{Thm:right, bimod Hom(M,M')}
	Let $M$ be a right $\spo$-module and  $M'$  an $\spo$-bimodule. Then $\Hom_\mathcal{RO}(M,M')$ admits a left $\spo$-module structure with the left scalar multiplication  defined by \eqref{eqdef:<rf,x>}.	
 	
\end{thm}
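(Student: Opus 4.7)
The plan is to deduce this from the already-established Theorem \ref{Thm:left, bimod Hom(M,M')} via the conjugate correspondence introduced just above. The first step is to observe that the assignment $f\mapsto f^{C}$ yields a real-linear bijection
$$\Phi:\Hom_\mathcal{RO}(M,M')\ \longrightarrow\ \Hom_\mathcal{LO}(M^{C},(M')^{C}),$$
because the identification $B_p(f,x)=A_p(x,f^{C})$ matches the right para-linearity condition $\re B_p(f,x)=0$ with the left para-linearity condition $\re A_p(x,f^{C})=0$. Since $M^{C}$ is a left $\O$-module and $(M')^{C}$ is again an $\O$-bimodule, Theorem \ref{Thm:left, bimod Hom(M,M')} applies and equips $\Hom_\mathcal{LO}(M^{C},(M')^{C})$ with a right $\O$-module structure $(g,s)\mapsto g\odot s$.

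Next I would transport this structure through $\Phi$ using the conjugation anti-involution $r\mapsto\overline{r}$ on $\O$, setting
$$r\odot f:=\Phi^{-1}\bigl(\Phi(f)\odot\overline{r}\bigr).$$
The key verification is that this transported operation coincides with the formula $\fx{r\odot f}{x}=r\fx{f}{x}+B_r(f,x)$ from \eqref{eqdef:<rf,x>}. This is a direct computation: expanding
$$\fx{x}{f^{C}\odot\overline{r}}=f^{C}(x)\overline{r}-A_{\overline{r}}(x,f^{C})$$
and substituting the conjugate-module rules $p\cdot x=x\overline{p}$ on $M^{C}$ and $(M')^{C}$ together with $A_{\overline{r}}(x,f^{C})=B_{r}(f,x)$ produces precisely the right-hand side of \eqref{eqdef:<rf,x>}. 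In particular, this shows $r\odot f\in\Hom_\mathcal{RO}(M,M')$, since its preimage already lies in $\Hom_\mathcal{LO}(M^{C},(M')^{C})$.

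Once the coincidence is in place, all the left $\O$-module axioms for $(\Hom_\mathcal{RO}(M,M'),\odot)$ follow mechanically from the corresponding right $\O$-module axioms established on $\Hom_\mathcal{LO}(M^{C},(M')^{C})$; in particular, the alternative law
$$r\odot(r\odot f)=r^{2}\odot f$$
is the image under $\Phi^{-1}$ of $(g\odot\overline{r})\odot\overline{r}=g\odot\overline{r}^{\,2}=g\odot\overline{r^{2}}$ with $g=\Phi(f)$, using $\overline{r^{2}}=\overline{r}^{\,2}$. The main obstacle I expect is the bookkeeping in the second step: the conjugate actions on $M^{C}$ and $(M')^{C}$ flip every left multiplication into a right multiplication together with conjugation, so the explicit matching $g\odot\overline{r}\leftrightarrow r\odot f$ must be carried out carefully. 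Once this translation is pinned down, the rest of the argument is purely formal, and one could equally well mirror the direct proof of Theorem \ref{Thm:left, bimod Hom(M,M')} by invoking right-module analogues of Propositions \ref{prop:left,left associator prop} and \ref{prop: left,bimod associator } if a self-contained verification is preferred.
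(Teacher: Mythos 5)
Your proposal is correct and follows essentially the same route as the paper: the paper derives this theorem as a ``dual version'' of Theorem \ref{Thm:left, bimod Hom(M,M')} by establishing exactly the identity $f^{C}\odot r=\overline{r}\odot f$ (equivalently $r\odot f=f^{C}\odot\overline{r}$) in the paragraph preceding the statement, which is the same conjugate-transport argument you give. Your write-up just makes the transported structure and the verification of the module axiom more explicit than the paper does.
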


 \begin{thm}
	Let  $M,$  $M'$ be two  $\spo$-bimodules. Then $\Hom_\mathcal{RO}(M,M')$ admits an   $\spo$-bimodule structure  with the left scalar multiplication  defined by \eqref{eqdef:<rf,x>} and the right scalar multiplication  defined by  \eqref{eqdef:<fr,x>}.
\end{thm}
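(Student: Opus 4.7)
The plan is to reduce the statement to the already established Theorem \ref{thm:Hom bimod} by means of the conjugation correspondence introduced in Section \ref{sec:right O mod}. Given two $\O$-bimodules $M$ and $M'$, the conjugate bimodules $M^C$ and $M'^C$ are again $\O$-bimodules, and the assignment $f\mapsto f^{C}$ is a bijection
$$
\Hom_\mathcal{RO}(M,M')\longrightarrow \Hom_\mathcal{LO}(M^{C},M'^{C}).
$$
By Theorem \ref{thm:Hom bimod} applied to $M^{C}$ and $M'^{C}$, the right-hand side already carries a canonical $\O$-bimodule structure with respect to the multiplications \eqref{eqdef:<x,fr>} and \eqref{eqdef:<x,rf>}. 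The aim is to transport this structure through the bijection and check that the transported actions are precisely those defined by \eqref{eqdef:<rf,x>} and \eqref{eqdef:<fr,x>}.

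The translation is furnished by the two identities established in Section \ref{sec:right O mod}, namely
$$
f^{C}\odot r=\overline{r}\odot f,\qquad r\odot f^{C}=f\odot\overline{r}.
$$
Reading them off the conjugation bijection, the right action of $r$ on $f^C$ in $\Hom_\mathcal{LO}(M^{C},M'^{C})$ corresponds to the left action of $\overline{r}$ on $f$ in $\Hom_\mathcal{RO}(M,M')$, and vice versa. Hence the axioms $r^{2}\odot g=r\odot(r\odot g)$ and $g\odot r^{2}=(g\odot r)\odot r$ for $g\in\Hom_\mathcal{LO}(M^{C},M'^{C})$ become, after applying the bijection and substituting $r\mapsto \overline{r}$, the corresponding axioms for the left and right actions on $\Hom_\mathcal{RO}(M,M')$. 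In particular, closure of $\Hom_\mathcal{RO}(M,M')$ under the right scalar multiplication \eqref{eqdef:<fr,x>} follows from closure of $\Hom_\mathcal{LO}(M^{C},M'^{C})$ under the corresponding left action; the left closure was already recorded in Theorem \ref{Thm:right, bimod Hom(M,M')}.

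The compatibility condition $[p,f,q]=0$, which upgrades the two one-sided actions to a genuine bimodule structure, is the dual of the analogous identity for $\Hom_\mathcal{LO}(M^{C},M'^{C})$ proved via Lemma \ref{lem:[p,q,f](x)=[p,q,f(x)],[q,f,p](x)=[q,f,p](x)}. Through the same substitution $r\mapsto\overline{r}$, together with the fact that conjugation reverses the order of products in $\O$ (so that a left-action associator $[q, p, f^{C}]$ on the $\mathcal{LO}$ side becomes a right-action associator $[p, q, f]$ on the $\mathcal{RO}$ side), one concludes the required vanishing.

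The main obstacle is purely bookkeeping: one must track carefully the swapping of left and right actions together with the appearance of conjugate scalars, since $\overline{pq}=\overline{q}\,\overline{p}$, and verify that the order-reversal produced by conjugation is exactly compensated by the swap of left and right sides in the bijection $f\mapsto f^{C}$. Once this is handled, every axiom transfers from Theorem \ref{thm:Hom bimod} with no further computation. (An alternative, more self-contained route would be to repeat the arguments of Theorems \ref{Thm:left, bimod Hom(M,M')}, \ref{thm:O mod stct bi-lrft} and \ref{thm:Hom bimod} verbatim using the right-associator identities \eqref{eq:associator 2 id:Bpq(f,x)}, \eqref{eq:Bp(rf,x)rightbi}, \eqref{eq:Bp(fr,x)biright} and the dual versions of Propositions \ref{prop:left,left associator prop}--\ref{thm:bimod,left,associator}, but the duality approach is strictly cheaper.)
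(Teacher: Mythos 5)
Your proposal is correct and follows essentially the same route as the paper: the paper obtains this theorem as a ``dual version'' of the left-handed results, using precisely the conjugation bijection $f\mapsto f^{C}$ together with the identities $f^{C}\odot r=\overline{r}\odot f$ and $r\odot f^{C}=f\odot\overline{r}$ to transport the bimodule structure from $\Hom_\mathcal{LO}(M^{C},M'^{C})$. Your account is in fact more explicit than the paper's (which leaves the bookkeeping of conjugate scalars and the left/right swap implicit), but there is no substantive difference in method.
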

 \begin{thm}\label{thm:r-Hom(M,M') free}
	Let  $M$, $M'$ be two  $\spo$-bimodules. Then
	\begin{align}	\re{\Hom_\mathcal{RO}(M,M')}&=\Hom_{\spo}(M,M')\label{seteq:huaal(rHom(M,M)=HomO(MM))}
	\end{align}
	 \end{thm}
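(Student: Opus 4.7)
The plan is to mirror the argument for Theorem \ref{thm:Hom kelie left,bi}, replacing left associators $A_p(x,f)$ by right associators $B_p(f,x)$ and invoking the right-module analogues of the identities collected in Proposition \ref{thm:mod strc over general mod}. In particular, identity \eqref{eq:alm right bimod[pqf]x in } will play the role that \eqref{eq:alm left bi[f,p,q](x)=[f(x),p,q]-Ap(x,f)q-Aq(x,fp)+A pq(x,f)} played in the left case. Alternatively, one could appeal to the conjugate construction $M \mapsto M^C$, under which $f \in \Hom_\mathcal{RO}(M,M')$ corresponds to $f^C \in \Hom_\mathcal{LO}(M^C,M'^C)$ with $B_p(f,x)=A_p(x,f^C)$, and deduce the result directly from Theorem \ref{thm:Hom kelie left,bi}; but the direct argument is as short.

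For the inclusion $\Hom_{\spo}(M,M')\subseteq\re \Hom_\mathcal{RO}(M,M')$, I would take $f\in\Hom_{\spo}(M,M')$, so that $B_p(f,x)=0$ for all $p\in\O$ and $x\in M$. Substituting into identity \eqref{eq:alm right bimod[pqf]x in } gives
\begin{equation*}
[p,q,f](x)=[p,q,f(x)]-B_p(q\odot f,x),
\end{equation*}
whose real part vanishes for every $x$. Since $[p,q,f]\in\Hom_\mathcal{RO}(M,M')$ by the right-module analogue of Theorem \ref{Thm:right, bimod Hom(M,M')}, the right-module version of Corollary \ref{cor: A_p(x,f)=0 and f(px)=pf(x)} (ii) forces $[p,q,f]=0$, so $f$ lies in the nucleus $\huaa{\Hom_\mathcal{RO}(M,M')}$, which, by \eqref{eq:reM=huaaM=ZM} applied to the bimodule $\Hom_\mathcal{RO}(M,M')$, equals $\re\Hom_\mathcal{RO}(M,M')$.

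For the reverse inclusion, suppose $f\in\re\Hom_\mathcal{RO}(M,M')$, so $[p,q,f](x)=0$ for all $x\in M$. Restricting to an associative element $x\in\re M$, where $B_p(f,x)=B_q(f,x)=B_{pq}(f,x)=0$ by the right analogue of Corollary \ref{cor: A_p(x,f)=0 and f(px)=pf(x)} (i), identity \eqref{eq:alm right bimod[pqf]x in } collapses to $[p,q,f(x)]=0$ for every $p,q\in\O$. Hence $f(\re M)\subseteq\re M'$, and the right analogue of Corollary \ref{lem: f(huaa M)=0 yields f=0}(ii) upgrades this to $f\in\Hom_{\spo}(M,M')$, completing the proof.

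The main subtlety is purely bookkeeping: one must be sure that every tool used in the left-sided proof of Theorem \ref{thm:Hom kelie left,bi} (the associator identities, the vanishing criterion of Corollary \ref{cor: A_p(x,f)=0 and f(px)=pf(x)}, the bimodule structure on $\Hom$, and the characterization \eqref{eq:reM=huaaM=ZM} of $\re$ as the nucleus) has an already-established right-sided counterpart. Each of these has been provided earlier in subsection \ref{sec:right O mod}, so no new identity has to be derived; the only real work is matching the correct indices of the associator identity \eqref{eq:alm right bimod[pqf]x in }.
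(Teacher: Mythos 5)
Your direct argument is correct, but it is not the route the paper itself takes: the paper gives no standalone proof of this theorem, recording it instead as a ``dual version'' of Theorem \ref{thm:Hom kelie left,bi}. Concretely, in subsection \ref{sec:right O mod} the paper first establishes $f^{C}\odot r=\overline{r}\odot f$ and $r\odot f^{C}=f\odot\overline{r}$, so that the conjugation $M\mapsto M^{C}$ identifies $\Hom_\mathcal{RO}(M,M')$ with $\Hom_\mathcal{LO}(M^{C},M'^{C})$ (with conjugated scalar action), carries nuclei to nuclei, and fixes $\Hom_{\O}(M,M')$; the identity $\re\Hom_\mathcal{RO}(M,M')=\Hom_{\O}(M,M')$ is then transported directly from the left-handed case. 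That is precisely the second route you mention and then set aside. What your chosen route buys is self-containedness --- you never need to check that conjugation respects real parts and $\O$-linearity --- at the cost of re-running both inclusions with $B_p$ in place of $A_p$ via \eqref{eq:alm right bimod[pqf]x in }; what the paper's route buys is that the theorem becomes a one-line corollary once the transport of structure is set up. One bookkeeping slip in your second inclusion: the terms of \eqref{eq:alm right bimod[pqf]x in } that must vanish for associative $x$ are $B_{pq}(f,x)$, $pB_q(f,x)$ and $B_p(q\odot f,x)$, not $B_p(f,x)$; this is harmless, since $q\odot f$ is again right para-linear and the same vanishing criterion applies to it, but it should be stated.
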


{We now define  right regular compositions also in a dual manner.}	
\begin{mydef}\label{def:right mod regular composition}
	Let $M,\ M'$ be two right $\spo$-modules and $\ M''$  an $\spo$-bimodule. If  $f\in  \Hom_{\spr}(M',M'')$ and $g\in  \Hom_{\spr}(M,M')$,
	we define their \textbf{(right) regular composition} as
	$$(f\circledcirc_r g)\,(x):=(f\circ g)(x)+[x,f,g],$$ where $$  [x,f,g]:=\sum_{j=1}^7 \Big( \re f\big(B_{e_j}(g,x)\big)\Big)e_j.$$
	
\end{mydef}
\begin{rem}\label{rem:[x,f,g]=[f,g,x]}
	
	By direct calculation,  for right regular compositions we have
	\begin{equation}\label{eq:Bp(f odot g,x)=B_p(f,g(x))+f(B_p(g,x))-[f,g,xp]+[f,g,x]p}
	B_p(f\circledcirc_r g,x)=B_p(f,g(x))+f(B_p(g,x))-[xp,f,g]+[x,f,g]p.
	\end{equation}
	We point out that the associator defined in the regular composition of right \almost linear maps are the same with the left one essentially.  Indeed,    it holds
	\begin{align*}
	[x,f,g]=[f^{C},g^{C},x].
	\end{align*}
	This can be checked   by direct calculations.
	
	We shall omit the subscripts $``l"$ and $``r"$ in regular compositions for simplicity, if the module is understood well.
\end{rem}

\

We now discuss the relations between left and right \almost linear maps over $\O$-bimodules.
 We first introduce a notion of \textbf{transpose} of \almost linear maps, which makes us  transform a left \almost linear map into a right one. Let  $M$, $M'$ be two  $\spo$-bimodules from now on.

\begin{mydef}\label{def:transpose}
	Let $f\in \Hom_\mathcal{LO}(M,M')$ be a left \almost linear map. A right \almost linear  map $g\in  \Hom_\mathcal{RO}(M,M')$ is called a \textbf{(right)  transpose} of $f$ if $$\re\big(f(x)-g(x)\big)=0  \text{ \quad for\ all } x\in M.$$
	One can define   \textbf{(left)  transpose} similarly.
\end{mydef}
\begin{rem}
	By Theorems \ref{thm: f in Hom(M,M')  equivalent:} and  \ref{thm: f in right-Hom(M,M')  equivalent:}, there exists a unique (right)  transpose for every left \almost linear map $f\in \Hom_\mathcal{LO}(M,M')$, denoted by $\hat{f}$. More precisely,
	\begin{eqnarray}\label{eq:hat f}
	\hat{f}=r\text{-}\lif(f_\R).
	\end{eqnarray}

	Similar results hold for left  transpose. The left  transpose of a right \almost linear map $f$ will still be denoted by $\hat{f}$.
\end{rem}

The transpose map turns out to be  an    isomorphism.
\begin{thm}
	Let $M,M'$ be two $\O$-bimodules. Then
	\begin{eqnarray*}\varphi:\Hom_\mathcal{LO}(M,M')&\to& \Hom_\mathcal{RO}(M,M')\\
		f &\mapsto&  \hat{f}
	\end{eqnarray*} is an  isomorphism of $\O$-bimodules. %involutory
\end{thm}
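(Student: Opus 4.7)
The plan is to factor $\varphi$ through $\Hom_\R(M,\re M')$ using the real part operator and then exploit the uniqueness of the transpose to reduce bimodule compatibility to identities among real parts. Throughout I will use that $(\hat f)_\R = f_\R$ by the defining property of the transpose, together with the uniqueness statement from the Remark following Definition \ref{def:transpose}.

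For bijectivity, recall from \eqref{eq:hat f} that $\hat f = r\text{-}\lif(f_\R)$. Combined with $f_\R = \re_*(f)$ from \eqref{def:lif-301}, this gives $\varphi = r\text{-}\lif \circ \re_*$. Since $\re_*$ is a bijection (inverse to $l\text{-}\lif$ from \eqref{eq:lif}) and the right analog $r\text{-}\lif$ is likewise bijective, $\varphi$ is an $\R$-linear bijection, with inverse $g \mapsto l\text{-}\lif(g_\R)$ (the left transpose of $g$).

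To establish $\O$-bimodule compatibility, I must verify
\begin{align*}
\widehat{r \odot f} \;=\; r \odot \hat f \qquad \text{and} \qquad \widehat{f \odot r} \;=\; \hat f \odot r
\end{align*}
for all $r \in \O$ and $f \in \Hom_\mathcal{LO}(M,M')$. By uniqueness of the transpose, each identity reduces to equality of real parts. For the first, \almost linearity of $f$ and of $\hat f$ kills the relevant associators, yielding
\begin{align*}
(r \odot f)_\R(x) \;=\; f_\R(xr), \qquad (r \odot \hat f)_\R(x) \;=\; \re\bigl(r\,\hat f(x)\bigr),
\end{align*}
so the problem becomes the identity $\re(r\,\hat f(x)) = f_\R(xr)$. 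Substituting the expansion $\hat f(x) = f_\R(x) - \sum_{j=1}^7 f_\R(x e_j)\,e_j$ from Theorem \ref{thm: f in right-Hom(M,M')  equivalent:}, and using that each $f_\R(xe_j)$ lies in $\re M' = \huaa{M'} = \hua{Z}{}{M'}$, the products $r(e_j f_\R(xe_j))$ rearrange to $(re_j)\, f_\R(xe_j)$ without associator loss, and their real parts evaluate by \eqref{eq:real part on good bimod } to $\re(re_j)\,f_\R(xe_j)$. A scalar bookkeeping using $\re(re_j) = -r_j$ for $j\ge 1$ and $\R$-linearity of $f_\R$ completes the check. The identity $\widehat{f \odot r} = \hat f \odot r$ proceeds in parallel: here one expands $f$ via Theorem \ref{thm: f in Hom(M,M')  equivalent:} and verifies the twin identity $\re(f(x)r) = f_\R(rx)$.

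The main technical hurdle is navigating the non-associative products that appear when real parts are extracted from $r\,\hat f(x)$ or $f(x)\,r$. The crucial observation that tames them is that every coefficient $f_\R(\,\cdot\,)$ lands in $\re M'$, which simultaneously coincides with the nucleus and the commutative center; this kills every associator obstruction that would otherwise block the necessary rearrangements. Once these moves are legal, everything collapses onto the scalar identities above, and the theorem follows.
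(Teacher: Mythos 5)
Your proof is correct, and it reaches the conclusion by a genuinely different organization of the computation than the paper's. For bijectivity the two arguments coincide in substance: your factorization $\varphi=r\text{-}\lif\circ\re_*$ is just the statement $\hat{\hat f}=f$ that the paper records. The divergence is in the homomorphism check. The paper verifies only the left action and only on the basis units: it expands $\fx{\varphi(e_i\odot f)}{x}$ and $\fx{e_i\odot\varphi(f)}{x}$ in full for $x\in\re M$ via Theorems \ref{thm: f in Hom(M,M')  equivalent:} and \ref{thm: f in right-Hom(M,M')  equivalent:}, matches all components through the antisymmetry of the $\epsilon$-symbol, and then gets the right action for free from \eqref{eq:HomO(M,M')=l-Hom(M,M')}. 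You instead treat a general scalar $r$ and both actions, but compare only real parts at every $x\in M$; this is legitimate because a right para-linear map is determined by its real part (assertion $(\mathrm{ii})$ of Theorem \ref{thm: f in right-Hom(M,M')  equivalent:}), so each compatibility collapses to a scalar identity, namely $\re\big(r\hat f(x)\big)=f_{\R}(xr)$ and $\re\big(f(x)r\big)=f_{\R}(rx)$. Your verification of these is sound: definitions \eqref{eqdef:<x,rf>}, \eqref{eqdef:<rf,x>}, \eqref{eqdef:<x,fr>}, \eqref{eqdef:<fr,x>} together with $\re A_r=\re B_r=0$ give the displayed reductions, and the rearrangements are licensed precisely because every coefficient lies in $\re M'=\huaa{M'}=\hua{Z}{}{M'}$, after which \eqref{eq:real part on good bimod } finishes the scalar bookkeeping. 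The trade-off: your route avoids the $\epsilon$-symbol index gymnastics and works for arbitrary $r$ rather than basis units, at the cost of checking the right action separately, a step the paper's appeal to \eqref{eq:HomO(M,M')=l-Hom(M,M')} makes unnecessary. Both proofs are complete.
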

\begin{proof} It is easy to see that
	$\hat{\hat{f}}=f$.  It remains to show  that  $\varphi$ is an $\O$-homomorphism.
	Let $f\in \Hom_\mathcal{LO}(M,M')$ be a left \almost linear map. For any $x\in \re M$, it follows from  Theorem \ref{thm: f in Hom(M,M')  equivalent:} that
	\begin{align*}
	\fx{\varphi(e_i\odot f)}{x}&=\re\fx{x}{e_i\odot f}-\sum_{j=1}^7 e_j\re\fx{xe_j}{e_i\odot f}\\
	&=f_{\R}(xe_i)-\sum_{j,k=1}^7 e_jf_{\R}(xe_k\epsilon_{jik}-\delta_{ij}x)\\
	&=f_{\R}(xe_i)+e_if_{\R}(x)-\sum_{j,k=1}^7\epsilon_{jik} e_jf_{\R}(xe_k),\intertext{and similarly, }
	\fx{e_i\odot\varphi (f)}{x}&=e_i\big(f_{\R}(x)-\sum_{j=1}^7 e_jf_{\R}(xe_j)\big)\\
	&=e_if_{\R}(x)-\sum_{j,k=1}^7 (\epsilon_{ijk}e_k-\delta_{ij})f_{\R}(xe_j)\\
	&=f_{\R}(xe_i)+e_if_{\R}(x)-\sum_{j,k=1}^7\epsilon_{ijk} e_kf_{\R}(xe_j).
	\end{align*}
	Symmetry in  $i,\, j$  ensures that $	\fx{\varphi(e_i\odot f)}{x}=	\fx{e_i\odot\varphi (f)}{x}$ for $x\in \re M$  and thus
	$$\varphi(e_i\odot f)=e_i\odot \varphi (f)$$
	for $i=1,\dots,7$. This shows that $\varphi$ is an $\O$-homomorphism  by \eqref{eq:HomO(M,M')=l-Hom(M,M')}.
\end{proof}

 The common subset of    $	\Hom_\mathcal{LO}(M,M')$ and $ \Hom_\mathcal{RO}(M,M')$   is exactly the set of  $\O$-linear maps.
\begin{thm}\label{Thm:relation-left-R223}
	Let  $M$, $M'$ be two  $\spo$-bimodules. Then  	
	\begin{align}
	\Hom_\mathcal{LO}(M,M')\cap \Hom_\mathcal{RO}(M,M')=\Hom_{\spo}(M,M')\label{seteq:rHom cap lHom=HomO}
	\end{align}
\end{thm}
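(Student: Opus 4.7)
The plan is to prove the two inclusions separately, using the characterizations of the real part operator and of associative elements that were established earlier.

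The inclusion $\Hom_{\spo}(M,M') \subseteq \Hom_\mathcal{LO}(M,M') \cap \Hom_\mathcal{RO}(M,M')$ is immediate from the definitions: if $f$ is $\O$-linear then $A_p(x,f) = f(px) - pf(x) = 0$ and $B_p(f,x) = f(x)p - f(xp) = 0$ identically, so in particular $\re A_p(x,f) = 0$ and $\re B_p(f,x) = 0$.

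For the nontrivial inclusion, suppose $f \in \Hom_\mathcal{LO}(M,M') \cap \Hom_\mathcal{RO}(M,M')$. My strategy is to first restrict attention to $\re M$, then use the commutative-center characterization $\re M = \hua{Z}{}{M}$ from \eqref{eq:reM=huaaM=ZM}. Fix any $x \in \re M$. Because $x$ is an associative element and $f$ is left para-linear, part $(\mathrm{i})$ of Corollary \ref{cor: A_p(x,f)=0 and f(px)=pf(x)} gives $A_p(x,f) = 0$ for all $p \in \O$, i.e. $f(px) = pf(x)$. Applying the right-module analogue of the same corollary (valid since $f$ is also right para-linear), I obtain $B_p(f,x) = 0$, i.e. $f(xp) = f(x)p$. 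But $x \in \hua{Z}{}{M}$ means $px = xp$, so combining these two identities yields
\[
pf(x) = f(px) = f(xp) = f(x)p
\]
for every $p \in \O$. This says $f(x) \in \hua{Z}{}{M'} = \re M'$, so that $f(\re M) \subseteq \re M'$.

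To conclude I would then invoke part $(\mathrm{ii})$ of Corollary \ref{lem: f(huaa M)=0 yields f=0}: since $f \in \Hom_\mathcal{LO}(M,M')$ and $f|_{\re M}$ sends $\re M$ into $\re M'$ (and is obviously $\R$-linear), that corollary forces $f \in \Hom_{\spo}(M,M')$, completing the proof. The only potential subtlety is making sure the cited corollary is applied with the correct module restriction; since it is already formulated for left para-linear maps and we just showed $f(\re M) \subseteq \re M'$, no extra work is needed.
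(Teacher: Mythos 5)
Your proposal is correct and follows essentially the same route as the paper: both arguments fix $x\in\re M$, use Corollary \ref{cor: A_p(x,f)=0 and f(px)=pf(x)} (and its right-module analogue) to kill $A_p(x,f)$ and $B_p(f,x)$, combine this with $\re M=\hua{Z}{}{M}$ to get $pf(x)=f(x)p$ and hence $f(\re M)\subseteq\re M'$, and conclude via Corollary \ref{lem: f(huaa M)=0 yields f=0}. The only cosmetic difference is that the paper organizes the computation as an expansion of the commutator $[f(x),p]$, whereas you write the chain of equalities directly; the content is identical.
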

\begin{proof}	
	%We prove \eqref{seteq:rHom cap lHom=HomO}.
	According to identity \eqref{eq:HomO(M,M')=l-Hom(M,M')}, we  have
	\begin{eqnarray}\label{eq:cap of Hom subset HomO}
	\Hom_\mathcal{LO}(M,M')\cap \Hom_\mathcal{RO}(M,M')\supseteq \Hom_{\spo}(M,M').
	\end{eqnarray}
	%	$$r\text{-}\Hom(M,M')\bigcap l\text{-}\Hom(M,M')\supseteq \Hom_{\spo}(M,M').$$
	
	Conversely, let $f\in \Hom_\mathcal{LO}(M,M')\cap \Hom_\mathcal{RO}(M,M')$ and choose $x\in \re{M}$.  Then for any $p\in \spo$, we have
	\begin{align*}
	[f(x),p]&=f(x)p-pf(x)\\
	&=f(xp)+B_p(f,x)-f(px)+A_p(x,f)\\
	&=B_p(f,x)+A_p(x,f)+f([x,p])\\
	&=B_p(f,x)+A_p(x,f).
	\end{align*}
	The last step follows from \eqref{eq:reM=huaaM=ZM}.
	By Corollary \ref{cor: A_p(x,f)=0 and f(px)=pf(x)}, we conclude from $x\in \re M$ that  $A_p(x,f)=0$, and similarly    $B_p(f,x)=0$. Hence we obtain $[f(x),p]=0$ for all $p\in \O$. This yields $f(x)\in \re M'$  by \eqref{eq:reM=huaaM=ZM}.
	Then Corollary \ref{lem: f(huaa M)=0 yields f=0} ensures that $f\in \Hom_{\spo}(M,M') $.
		This completes the proof.
	\end{proof}
\begin{rem}
		Recall definitions \eqref{eqdef:<x,fr>}, \eqref{eqdef:<x,rf>} \eqref{eqdef:<rf,x>} and  \eqref{eqdef:<fr,x>}.
	Starting from the relation \eqref{seteq:rHom cap lHom=HomO}, we observe that  an $\O$-linear function $f$ between $\O$-bimodules can be considered as either types of left or right \almost linear map, so that it  admits four different scalar multiplications. More precisely, if we regard $f\in \Hom_\mathcal{LO}(M,M')$, then for any $r\in \O$,
	\begin{eqnarray*}
		\left<x,f\odot r\right>=\left<x,f\right>r,\quad \left<x,r\odot f\right>=\left<xr,f\right>;
	\end{eqnarray*}
	and if we regard $f\in  \Hom_\mathcal{RO}(M,M')$, then for any $r\in \O$,
	\begin{eqnarray*}
		\left<r\odot f,x\right>=r\left<f,x\right>,\quad	\left<f\odot r,x\right>=\left<f,rx\right>.
	\end{eqnarray*}
	Note that all the related associators in definitions vanish since $f$ is $\O$-linear.

\end{rem}

\subsection{Semi-reflexivity}\label{sec:semi-reflex}

In this subsection, we shall show that octonionic modules are semi-reflexive. We denote by $M^{*_l}$ ($M^{*_r}$) the dual module $\Hom_{\mathcal{LO}}(M,\O)$ ($\Hom_{\mathcal{RO}}(M,\O)$) of an $\O$-module $M$ respectively. And we shall omit the subscript `l', `r' when the module is understood well.

Let $M$ be a left $\O$-module and
 $x_0$ be an arbitrary element of $M$. We define
\begin{eqnarray*}
	x_0'':\quad M^*&\to & \O\\
	f&\mapsto& f(x_0).
\end{eqnarray*}
It follows from Theorem \ref{Thm:left, bimod Hom(M,M')} that
 $M^*$ is a right $\spo$-module and similarly $M^{**}$ is a left $\spo$-module. We claim that $x_0''\in M^{**}$ is a   {right \almost linear} map on $M^*$. To show this, we first compute the right associator
\begin{align}\label{eq:Bp(f,x)=A(x,f)}
B_p(x_0'',f)=x_0''(f)p-x_0''(f\odot p)
=f(x_0)p-\fx{x_0}{f\odot p}
=A_p(x_0,f)
\end{align}
for any $f\in M^*$ and  any $p\in \spo$.
We  used definition \eqref{eqdef:<x,fr>} in the last equality.
This shows that $$\re (B_p(x_0'',f))=0$$ and hence $x_0''$ is a right $\O$-\almost linear map.

We define
\begin{eqnarray}\label{defeq:embedding tau}
\tau:\quad M&\rightarrow& M^{**} \\
x&\mapsto& x''.\notag
\end{eqnarray}
Then $\tau$ is an \textbf{embedding $\O$-homomorphism}. Indeed,
by definition, we have
\begin{align*}
\fx{\tau(rx)}{f}=\fx{rx}{f}=rf(x)+A_r(x,f)
\end{align*}
for any $r\in \O$, any $x\in M$ and any $f\in M^*$.
Next, by definition \eqref{eqdef:<rf,x>} and  identity \eqref{eq:Bp(f,x)=A(x,f)}, we obtain
\begin{align*}
\fx{r\odot\tau(x)}{f}=r\fx{\tau(x)}{f}+B_r(\tau(x),f)=rf(x)+A_r(x,f).
\end{align*}
Thus $\tau(rx)=r\odot \tau(x)$.
This shows that $\tau$ is an $\O$-homomorphism.	Thus each left $\O$-module $M$ is a semi-reflexive module.
\begin{rem}
	Let $\tau _\spr$ denote the canonical embedding $$\tau _\spr: M\rightarrow M^{*_\spr*_\spr}.$$ Then for all  $f\in M^*$ and $x\in M$ we have
	\begin{eqnarray}\label{eq:tau(x)0(f)=tauR(x)(f0)}
	\fx{\tau(x)_{\R}}{f}=\re (f(x))=f_{\R}(x)=\fx{\tau_\spr(x)}{f_{\R}}.
	\end{eqnarray}
	%$$\fx{\tau(x)_0}{f}=\re (f(x))=f_{\R}(x)=\fx{\tau_\spr(x)}{f_{\R}}$$
\end{rem}

\section{Non-associative categories of \almost linear maps}\label{sec:cat}

In this section, we aim to adopt the view of  non-associative categories to study    \almost linear maps on $\O$-modules.
The category $\mathcal{LO}$, which will be introduced later, is a typical example of non-associative categories, whose objects in this category are $\O$-bimodules, morphisms are left \almost linear maps, and compositions are left regular compositions.
This motivates us to introduce    the notion of non-associative categories and functors between non-associative categories and establish some results on $\mathcal{LO}$ similar  to the classical case.

\subsection{Non-associative categories and weak functors}

For the sake of completeness,
we give the definition of non-associative categories and functors in this subsection.

\begin{mydef}
	A \textbf{non-associative category} $\mathcal{C}$ consists of three ingredients: a class $\obj{C}$ of
	objects, a set of morphisms $\Hom_{\mathcal{C}}(X,Y)$ for every ordered pair $(X,Y)$ of objects, and composition
	\begin{eqnarray*}
		\Hom_{\mathcal{C}}(X,Y)\times\Hom_{\mathcal{C}}(Y,Z)&\to& \Hom_{\mathcal{C}}(X,Z)\\
		(f,g) &\mapsto& gf,
	\end{eqnarray*}
	for every ordered triple $X,Y,Z$ of objects. These ingredients are subject to the
	following axioms:
	\begin{enumerate}
		\item %the Hom sets are pairwise disjoint6; that is,
		Each $f\in \Hom_{\mathcal{C}}(X,Y)$ has a
		unique domain $X$ and a unique target $Y$;
		\item For each object $X$, there is an identity morphism $1_X\in \Hom_{\mathcal{C}}(X,X)$ such
		that $$f\,1_X = f, \qquad 1_Y\, f = f$$ for all $f\in \Hom_{\mathcal{C}}(X,Y)$.
	\end{enumerate}
	\end{mydef}

	\begin{mydef}
		Let $\mathcal{C}$ be a non-associative category. A morphism $f\in \Hom_{\mathcal{C}}(X,Y)$ is called an \textbf{associative morphism} if it satisfies:
		\begin{numcases}{}
		(fg)h=f(gh),\notag &$\forall X''\stackrel{h}{\to}{X'}\stackrel{g}{\to}{X}\stackrel{f}{\to}{Y}$;\\
		(gf)h=g(fh), \notag &$\forall X'\stackrel{g}{\to}{X}\stackrel{f}{\to}{Y}\stackrel{h}{\to}{X''}$;\\
		 (gh)f=g(hf),\notag &$\forall X\stackrel{f}{\to}{Y}\stackrel{g}{\to}{X'}\stackrel{h}{\to}{X''}$.
		\end{numcases}
	Associated to the non-associative category $\mathcal{C}$ is
 an associative  category $\huaa{\mathcal{C}}$.
 In $\huaa{\mathcal{C}}$, the objects coincide with $\mathcal{C}$, the morphisms are associative morphisms from category $\mathcal{C}$,  and composition is inherited  from $\mathcal{C}$. We call $\huaa{\mathcal{C}}$ the \textbf{nucleus subcategory} of ${\mathcal{C}}$.
	
\end{mydef}

\begin{eg}\label{eg:lo,ro}
	We define a  category $\mathcal{LO}$. Its objects   are $\O$-bimodules, morphisms are left \almost linear maps, and compositions are left regular compositions.
	Clearly,  $\mathcal{LO}$ is non-associative.
we claim that  the nucleus subcategory of $\mathcal{LO}$ is the associative  category of $\O$-bimodules with $\O$-linear morphisms and usual compositions.
	
 Indeed, it follows from Lemma \ref{lem:[fgh]=} that every $\O$-linear map between $\O$-bimodules is  an associative morphism. Conversely, if $f\in \Hom_{\mathcal{LO}}(X,Y)$ is an associative morphism, then for any $p,q\in \O$, by definition we have
		\begin{eqnarray}\label{eqpf:fpq=fp q}
		(f\circledcirc R_q)\circledcirc R_p=f\circledcirc (R_q\circledcirc R_p).
		\end{eqnarray}
	It can be checked that $$R_q\circledcirc R_p=R_{pq}.$$
	Combining this with Lemma \ref{lem:Rp prop 2} and \eqref{eqpf:fpq=fp q}, we conclude that
	$$p\odot(q\odot f)=(pq)\odot f$$ for all $p,q\in \O$. This means that $$f\in \huaa{\Hom_{\mathcal{LO}}(X,Y)}=\Hom_{\O}(X,Y)$$ by Theorem \ref{thm:Hom kelie left,bi}. This proves the claim.

Similarly we can also define a category $\mathcal{RO}$. The objects  are also $\O$-bimodules, morphisms are right \almost linear maps, and compositions are  right regular compositions.
Similar result holds for category $\mathcal{RO}$.

\end{eg}

We introduce the notion of functors and weak functors between non-associative categories.
\begin{mydef}
	Let $\mathcal{C}$ and $\mathcal{D}$ be two non-associative categories.
	\begin{itemize}
		\item
	 A \textbf{covariant functor} $T: \mathcal{C}\to  \mathcal{D}$
	consists	of data:
	\begin{enumerate}
		\item for each $X\in \obj{C}$, a function
		\begin{eqnarray*}\obj{C}&\to&\obj{D}
			\\
			X&\mapsto& T(X);
		\end{eqnarray*}
		\item for each $X,\,X'\in \obj{C}$, a function
		\begin{eqnarray*}\Hom_{\mathcal{C}}(X,X') &\to& \Hom_{\mathcal{D}}(T(X),T(X'))
			\\
			f &\mapsto& T(f)
		\end{eqnarray*}
	\end{enumerate}
	satisfying the following axioms:
	\begin{enumerate}
			\item for any $X\stackrel{f}{\to}{X'}\stackrel{g}{\to}{X''}$ in $\mathcal{C}$,  		$T(gf)=T(g)T(f)$ in $\mathcal{D}$,
		\item $T(1_X)=1_{T(X)}$ for every $X\in \obj{C}$.
	\end{enumerate}

	\item
	A \textbf{covariant weak  functor} $T: \mathcal{C}\to  \mathcal{D}$
	consists	of data:
	\begin{enumerate}
		\item for each $X\in \obj{C}$, a function
		\begin{eqnarray*}\obj{C}&\to&\obj{D}
			\\
			X&\mapsto& T(X);
		\end{eqnarray*}
		\item for each $X,\,X'\in \obj{C}$, a function
		\begin{eqnarray*}\Hom_{\mathcal{C}}(X,X') &\to& \Hom_{\mathcal{D}}(T(X),T(X'))
			\\
			f &\mapsto& T(f)
		\end{eqnarray*}
	\end{enumerate}
	satisfying the following axioms:
	\begin{enumerate}
			\item for any $X\stackrel{f}{\to}{X'}\stackrel{g}{\to}{X''}$ in $\mathcal{C}$, if one of $f$ and $g$ is an associative morphism, then 		$T(gf)=T(g)T(f)$ in $\mathcal{D}$,
		\item $T(1_X)=1_{T(X)}$ for every $X\in \obj{C}$.
	\end{enumerate}
	
	\end{itemize}	
\end{mydef}

Similar definitions for contravariant functors and weak contravariant functors.
\begin{mydef}
	Let  $\mathcal{C}$ and $\mathcal{D}$ be two non-associative categories.
	\begin{itemize}
		\item
	
	A \textbf{contravariant functor} $T: \mathcal{C}\to  \mathcal{D}$ 	consists	of data:
	\begin{enumerate}
		\item for each $X\in \obj{C}$, a function
		\begin{eqnarray*}\obj{C}&\to&\obj{D}
			\\
			X &\mapsto& T(X);
		\end{eqnarray*}
		\item for each $X,\,X'\in \obj{C}$, a function
		\begin{eqnarray*} \Hom_{\mathcal{C}}(X,X')&\to&\Hom_{\mathcal{D}}(T(X'),T(X))
			\\ f&\mapsto& T(f)
		\end{eqnarray*}
	\end{enumerate}
	satisfying the following axioms:
	\begin{enumerate}
			\item for any $X\stackrel{f}{\to}{X'}\stackrel{g}{\to}{X''}$ in $\mathcal{C}$,  		$T(gf)=T(f)T(g)$ in $\mathcal{D}$,
		\item $T(1_X)=1_{T(X)}$ for every $X\in \obj{C}$.
	\end{enumerate}
\item
A \textbf{contravariant weak  functor} $T: \mathcal{C}\to  \mathcal{D}$ 	consists	of data:
\begin{enumerate}
	\item for each $X\in \obj{C}$, a function
	\begin{eqnarray*}\obj{C}&\to&\obj{D}
		\\
		X &\mapsto& T(X);
	\end{eqnarray*}
	\item for each $X,\,X'\in \obj{C}$, a function
	\begin{eqnarray*} \Hom_{\mathcal{C}}(X,X')&\to&\Hom_{\mathcal{D}}(T(X'),T(X))
		\\ f&\mapsto& T(f)
	\end{eqnarray*}
\end{enumerate}
satisfying the following axioms:
\begin{enumerate}
		\item for any $X\stackrel{f}{\to}{X'}\stackrel{g}{\to}{X''}$ in $\mathcal{C}$, if one of $f$ and $g$ is an associative morphism, then $T(gf)=T(f)T(g)$ in $\mathcal{D}$,
	\item $T(1_X)=1_{T(X)}$ for every $X\in \obj{C}$.
\end{enumerate}
\end{itemize}
\end{mydef}	
\begin{rem}
By definition, we know that every functor is automatically a weak functor and the two notions of  weak functor  and functor coincide in the case of associative categories.
Generally, the composition  of two weak functors may  be not a weak functor any more. However, it is easy to see that the composition  of a weak functor and a functor  continues to be a weak functor.
\end{rem}
\begin{eg}\label{eg:w-hom}
	We denote by \text{\bf{Sets}} the associative category of sets. Let $\mathcal{C}$ be a non-associative category and $B\in \obj{\mathcal{C}}$.  Then there is a  natural covariant \textbf{weak Hom functor}
	$$w-\Hom_\mathcal{C}(B,-):\mathcal{C}\to \text{\bf{Sets}}$$
	and a contravariant	\textbf{weak Hom functor} $$w-\Hom_\mathcal{C}(-,B):\mathcal{C}\to \text{\bf{Sets}}.$$
	
	The covariant	\textbf{weak Hom functor} is defined, for
	all $C \in \obj{\mathcal{C}}$, by
	$$w-\Hom_\mathcal{C}(B,-)(C)=\Hom_\mathcal{C}(B,C)$$
	and if $f: C \to C'\in \mathcal{C}$, then $w-\Hom_\mathcal{C}(B,-)(f): \Hom_\mathcal{C}(B,C) \to\Hom_\mathcal{C}(B,C')$ is given
	by
	$$w-\Hom_\mathcal{C}(B,-)(f): h\mapsto fh.$$
	
	The contravariant	\textbf{weak Hom functor} is defined, for
	all $C \in \obj{\mathcal{C}}$, by
	$$w-\Hom_\mathcal{C}(-,B)(C)=\Hom_\mathcal{C}(C,B)$$
	and if $f: C \to C'\in \mathcal{C}$, then $w-\Hom_\mathcal{C}(-,B)(f): \Hom_\mathcal{C}(C',B) \to\Hom_\mathcal{C}(C,B)$ is given
	by
	$$w-\Hom_\mathcal{C}(-,B)(f): h\mapsto hf.$$
	
	We emphasize that the \textbf{Hom functors} defined above are all weak functors rather than genuine functors. This is because that the category \text{\bf{Sets}} is  associative. For the special case of non-associative categories of $\mathcal{LO}$ and $\mathcal{RO}$, we shall introduce  \textbf{Hom functors} whose target categories are non-associative in Subsection \ref{sec:hom}.
\end{eg}
\begin{eg}\label{eg:o Mod R}
We denote by $\O\text{-}\textbf{Mod}_\R$ the associative category whose objects are $\O$-bimodules and morphisms are $\R$-linear maps and compositions are usual compositions. Note that the associative morphisms in $\mathcal{LO}$ are $\O$-linear maps.	Then in view of Lemma \ref{lem:vanishing[f,g,x]=0}, we have a covariant weak functor
\begin{align*}
\iota:\qquad\quad\mathcal{LO}\quad&\xlongrightarrow[]{\hskip1cm} \quad\O\text{-}\textbf{Mod}_\R\\
M\quad&\shortmid\!\xlongrightarrow[]{\hskip1cm}\quad M\\
f:M\to N\quad&\shortmid\!\xlongrightarrow[]{\hskip1cm}\quad f.
\end{align*}
\end{eg}

\begin{eg}\label{eg:forget}
	Let $$U :\qquad\quad \mathcal{LO}\quad\xlongrightarrow[]{\hskip1cm} \quad \text{\bf{Sets}}$$ be the forgetful functor which assigns to each
	$\O$-bimodule $M$ its underlying set and views each left para-linear map as a mere
	function. Then by the similar argument, we conclude that $U$ is covariant weak functor.
\end{eg}

\begin{eg}
	Let $\R\text{-}\textbf{Vec}$ denote the category of real vector spaces.
	We define a weak functor
	\begin{align*}
	\mathfrak{Re}:\qquad\quad\mathcal{LO}\quad&\xlongrightarrow[]{\hskip1cm} \quad\R\text{-}\textbf{Vec}\\
	M\quad&\shortmid\!\xlongrightarrow[]{\hskip1cm}\quad \re M\\
	f:M\to N\quad&\shortmid\!\xlongrightarrow[]{\hskip1cm}\quad (\re f)|_{\re M}.
	\end{align*}
	By \eqref{eq:re f=ext( fR| reM)}, we know that
	$$\mathfrak{Re}(f)=(\re f)|_{\re M}=f_{\R}|_{\re M}:\re M\to \re M'.$$
	This means for any $x\in \re M$, we have
	\begin{eqnarray}
	(\mathfrak{Re}(f))(x)=f_{\R}(x)=\re f(x).
	\end{eqnarray}
	Let  $X\stackrel{f}{\to}{X'}\stackrel{g}{\to}{X''}$. Then for all $x\in \re X$,
	$$(\mathfrak{Re}(g\circledcirc f))(x)=\re ((g\circledcirc f)(x))$$ and
	 $$(\mathfrak{Re}(g))\circ(\mathfrak{Re}(f))(x)=\re g(\re f(x)).$$
	If one of $f$ and $g$ is $\O$-linear, then it follows from Lemma \ref{lem:vanishing[f,g,x]=0} that
	$$\re ((g\circledcirc f)(x))=\re ((g\circ  f)(x)).$$
If $f$ is $\O$-linear, then $$(\mathfrak{Re}(g))\circ(\mathfrak{Re}(f))(x)=\re g(f(x)).$$
If $g$ is $\O$-linear, then it follows from Lemma \eqref{lem:f re=re f} that
$$\re ((g\circ  f)(x))=g(\re f(x))=\re g(\re f(x)).$$ In both cases, we  obtain that $$(\mathfrak{Re}(g))\circ(\mathfrak{Re}(f))=\re ((g\circledcirc f).$$ This proves that $\mathfrak{Re}$ is a weak functor.
\end{eg}

\begin{mydef}
	Let  $\mathcal{C}$, $\mathcal{D}$ be two non-associative categories.
	Suppose   $S,T: \mathcal{C} \rightarrow \mathcal{D}$  are two covariant weak  functors. A \textbf{natural  transformation}  $\tau: S \rightarrow T $ is a one-parameter family of morphisms in  $\mathcal{{D}}$, 	
	$$\tau = \left(\tau_{X}: S (X) \rightarrow T (X)\right)_{X \in \obj{\mathcal{C}}}$$		making the following diagram
	commute in $\mathcal{{D}}$:
	$$\begin{array}[c]{ccc}
	S(X)\quad&\stackrel{\tau_{X}}{\xlongrightarrow[]{\hskip1cm}}&T(X)\\
	\, \Biggm\downarrow\scriptstyle{S(f)}&&\Biggm\downarrow\scriptstyle{ T(f)}\\
	S(Y)\quad&\stackrel{\tau_{Y}}{\xlongrightarrow[]{\hskip1cm}}&T(Y).
	\end{array}$$
	for all $ X \stackrel{f}{\longrightarrow} Y $ in  ${\mathcal{C}}$.

 	A \textbf{natural isomorphism} is a natural  transformation $\tau$ for
	which each $\tau_X$ is an isomorphism.
	Natural  transformation  between contravariant  functors can be defined similarly.
\end{mydef}
\begin{rem}
	We point out that the usual composition  of two  natural transformations may be not a  natural transformation generally. %However, if $$
\end{rem}

We make the following notation. If $F, G : \mathcal{C} \to \mathcal{{D}}$ are weak functors of the same variance, then we denote
$$\text{Nat}(F, G) :=\{\text{natural transformations } F\to G\}.$$
\begin{thm}[\textbf{Yoneda lemma}]
	Let $\mathcal{C}$ be a category, let $A\in  \obj{\mathcal{C}}$, and
	let $G:\mathcal{C}\to  \text{\bf{Sets}}$ be a covariant  functor. Then there is an bijection
	\begin{eqnarray*}
		y:\text{Nat}(w-\Hom_{\mathcal{C}}(A,-), G)& \to& G(A)\\
		\tau&\mapsto&\tau_A(1_A).
	\end{eqnarray*}
\end{thm}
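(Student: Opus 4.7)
The plan is to mimic the classical Yoneda argument, taking care at each step that the places where composition is used are places where it actually behaves well despite $\mathcal{C}$ being non-associative. First I would construct an explicit candidate inverse
\begin{eqnarray*}
y^{-1}: G(A) &\to& \text{Nat}(w\text{-}\Hom_\mathcal{C}(A,-), G)\\
a &\mapsto& \tau^a,
\end{eqnarray*}
where, for each $X\in\obj{\mathcal{C}}$, the component $\tau^a_X: \Hom_\mathcal{C}(A,X)\to G(X)$ is defined by $\tau^a_X(h) := G(h)(a)$.

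Next I would verify that $\tau^a$ really is a natural transformation from $w\text{-}\Hom_\mathcal{C}(A,-)$ to $G$. For any $f: X \to Y$ in $\mathcal{C}$ and any $h\in \Hom_\mathcal{C}(A,X)$, the naturality square demands
$$\tau^a_Y\big(w\text{-}\Hom_\mathcal{C}(A,-)(f)(h)\big) \;=\; G(f)\big(\tau^a_X(h)\big),$$
i.e.\ $G(fh)(a) = G(f)(G(h)(a))$. This is precisely the functoriality axiom $G(fh) = G(f)G(h)$, and here I use the crucial hypothesis that $G$ is a genuine covariant functor, not merely a weak one: it preserves \emph{every} composition in $\mathcal{C}$, including those where neither $f$ nor $h$ is an associative morphism. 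This is the only point at which the distinction between weak and honest functor really enters, and it is what makes the statement go through despite the non-associativity of $\mathcal{C}$.

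Then I would check that $y$ and $y^{-1}$ are mutually inverse. For $y\circ y^{-1}$:
$$y(y^{-1}(a)) \;=\; y(\tau^a) \;=\; \tau^a_A(1_A) \;=\; G(1_A)(a) \;=\; 1_{G(A)}(a) \;=\; a,$$
using only that $G$ preserves identities. For $y^{-1}\circ y$: given $\tau\in\text{Nat}(w\text{-}\Hom_\mathcal{C}(A,-),G)$ and any $h\in\Hom_\mathcal{C}(A,X)$, naturality of $\tau$ applied to the morphism $h: A\to X$ gives the commuting square
$$\tau_X\big(w\text{-}\Hom_\mathcal{C}(A,-)(h)(1_A)\big) \;=\; G(h)\big(\tau_A(1_A)\big).$$
The left-hand side is $\tau_X(h\cdot 1_A) = \tau_X(h)$ by the identity axiom, while the right-hand side is exactly $\tau^{\tau_A(1_A)}_X(h) = y^{-1}(y(\tau))_X(h)$. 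Hence $y^{-1}(y(\tau)) = \tau$.

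The main potential obstacle is the non-associativity of $\mathcal{C}$, which in principle could obstruct both the compatibility of $\tau^a$ with composition and the legitimacy of collapsing $h\cdot 1_A$ to $h$; but both difficulties evaporate because (i) $G$ is assumed to be a full functor into the associative category $\text{\bf{Sets}}$, so $G(fh)=G(f)G(h)$ holds unconditionally, and (ii) identity morphisms satisfy the unital axiom by definition. Thus no associativity of $\mathcal{C}$ is ever invoked, which is exactly why the Yoneda lemma survives in the non-associative setting — at the cost of only obtaining a weak representing functor $w\text{-}\Hom_\mathcal{C}(A,-)$ rather than a genuine one.
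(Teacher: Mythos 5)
Your proposal is correct and follows essentially the same argument as the paper: the paper proves injectivity and surjectivity separately, but its injectivity step is exactly your computation $\tau_B(f)=G(f)(\tau_A(1_A))$ from the naturality square at $1_A$, and its surjectivity step is exactly your construction $\tau^a_X(h)=G(h)(a)$, with the same crucial appeal to $G$ being a genuine (not merely weak) functor so that $G(fh)=G(f)G(h)$ holds unconditionally. Your observation that only the unital axiom and the honest functoriality of $G$ are needed, never associativity of $\mathcal{C}$, matches the paper's remark that a weak $G$ would yield only an injection.
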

\begin{proof}
	If $\tau : w-\Hom_{\mathcal{C}}(A,-)\to G$ is a  natural transformation, then $
	\tau_A(1_A)$ lies in the set $G(A)$, for $\tau_A :\Hom_{\mathcal{C}}(A,A) \to G(A).$ Thus,  $y$ is
	well-defined.
	
	We prove that $y$ is an injection.
	For each $B\in \obj{\mathcal{C}}$ and $f\in \Hom_{\mathcal{C}}(A,B)$, there is a commutative diagram
	$$\begin{array}[c]{ccc}
\Hom_{\mathcal{C}}(A,A)\quad&\stackrel{\tau_{A}}{\xlongrightarrow[]{\hskip1cm}}&G(A)\\
	\quad \Biggm\downarrow\scriptstyle{w-\Hom_{\mathcal{C}}(A,-)(f)}&&\Biggm\downarrow\scriptstyle{ G(f)}\\
	\Hom_{\mathcal{C}}(A,B)\quad&\stackrel{\tau_{B}}{\xlongrightarrow[]{\hskip1cm}}&G(B),
	\end{array}$$
	so that $$(G(f))\tau_A(1_A) = \tau_B(w-\Hom_{\mathcal{C}}(A,-)(f)(1_A)) = \tau_B(f1_A) = \tau_B(f).$$
	If $\sigma : w-\Hom_{\mathcal{C}}(A,-)\to G$ is  another  natural transformation, then $$\sigma_B(f) =
	(G(f))\sigma_A(1_A).$$ Hence, if $\sigma_A(1_A) = \tau_A(1_A)$, then $\sigma_B = \tau_B$ for all $B \in \obj{\mathcal{C}}$
	and, hence, $\sigma = \tau$. Therefore, $y$ is an injection.
	
	To see that $y$ is a surjection, take $x\in  G(A)$. For $B\in \obj{\mathcal{C}}$ and $g\in	\Hom_\mathcal{C}(A, B)$, define
	$$\tau_B(g) = (G(g))(x).$$
	We claim that $\tau$ is
	a   natural transformation; that is, if $h : B \to C$ is a morphism in $\mathcal{C}$, then we have $$(G(h)\circ\tau_B)(g) = G(h)\circ G(g)(x)$$ and $$\tau_C(w-\Hom_{\mathcal{C}}(A,-)(h)(g))=\tau_C(hg)=G(hg)(x).$$
	Since $G$ is a covariant  functor, it follows that
	$$ G(h)\circ G(g)=G(hg).$$
	This implies that $\tau$ is a  natural transformation and $$y(\tau) =
	\tau_A(1_A) = G(1_A)(x) = x.$$ Hence $y$ is a surjection. This completes the proof.
\end{proof}

\begin{rem}
If $G$ is a weak functor, then we shall only get an injection in the Yoneda lemma.
\end{rem}
These discussions show that the notion of weak functors in the setting of non-associative categories is natural. However, to  study   functors between non-associative categories is   difficult and important. We shall discuss functors on the specific non-associative category $\mathcal{LO}$ in  the following subsections.
\subsection{Conjugate functor}

The duality   in subsection \ref{sec:right O mod} can be interpreted as a covariant functor between two non-associative categories $\mathcal{LO}$ and $\mathcal{RO}$.
We call it   the \textbf{conjugate functor}. It is defined by
\begin{align*}
C:\qquad\quad\mathcal{LO}\quad&\xlongrightarrow[]{\hskip1cm} \quad\mathcal{RO}\\
M\quad&\shortmid\!\xlongrightarrow[]{\hskip1cm}\quad M^{C}\\
f\in \Hom_\mathcal{LO}(M,N)\quad&\shortmid\!\xlongrightarrow[]{\hskip1cm}\quad C(f):=f^{C}.
\end{align*}
We recall the notation $M^{C}$ of an $\O$-bimodule $M$ and the notation $f^{C}$ in subsection \ref{sec:right O mod}, where the bimodule structure on $M^{C}$ is defined as:
$$p\cdot x=x\overline{p},\quad x\cdot p=\overline{p}x.$$
For each $f\in \Hom_\mathcal{LO}(M,N)$, the morphism $C(f)$ is just defined to be $f$ as a map of sets (which is denoted by $f^{C}$ in subsection \ref{sec:right O mod}).

\begin{thm}
	The conjugate functor $C:\mathcal{LO}\longrightarrow \mathcal{RO}$ is a covariant functor.
\end{thm}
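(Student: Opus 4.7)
The plan is to verify the four standard defining conditions for a covariant functor, namely (i) $M^C$ is a bona fide object of $\mathcal{RO}$, (ii) $C(f) = f^C$ is a morphism of $\mathcal{RO}$ whenever $f$ is a morphism of $\mathcal{LO}$, (iii) $C$ preserves identities, and (iv) $C$ preserves composition. Conditions (i) and (iii) are essentially definitional: the new left and right multiplications $p \cdot x := x\overline{p}$ and $x \cdot p := \overline{p}x$ satisfy the alternative bimodule axioms by a routine check (already alluded to in subsection \ref{sec:right O mod}), and the identity map on $M$ is trivially sent to the identity map on $M^C$.

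For (ii), I would use the relation $B_p(f^C, x) = A_p(x, f)$ established in subsection \ref{sec:right O mod}, which is immediate from the unfolding $f^C(p \cdot x) = f(x\overline{p})$. Since $f \in \Hom_\mathcal{LO}(M,N)$ gives $\re A_p(x,f) = 0$ for all $p,x$, we deduce $\re B_p(f^C, x) = 0$ and hence $f^C \in \Hom_\mathcal{RO}(M^C, N^C)$ by Definition \ref{def:\almost linear} (right version).

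The main content is (iv): showing $(g \circledcirc_l f)^C = g^C \circledcirc_r f^C$ for any composable $M \xrightarrow{f} N \xrightarrow{g} P$ in $\mathcal{LO}$. Unfolding both sides with Definitions \ref{def:regular composition} and \ref{def:right mod regular composition}, the underlying ordinary compositions $g \circ f$ match, so the claim reduces to the identity of correction terms $[g, f, x] = [x, g^C, f^C]$. This is precisely the observation made in Remark \ref{rem:[x,f,g]=[f,g,x]}, which follows by direct comparison once one notes that $f^C(B_{e_j}(g^C, x)) = f(A_{e_j}(x,g))$ and that the right-multiplication by $e_j$ in $N^C$ equals the left-multiplication by $\overline{e_j} = -e_j$ in $N$; the two sign flips cancel and the two sums coincide.

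The only step requiring actual care is the bookkeeping in (iv), where one must track that conjugation swaps $e_j$ with $\overline{e_j}$ on both the inner and outer occurrences in the defining sums for $[g,f,x]$ and $[x,g^C,f^C]$. Everything else is essentially a translation through the duality dictionary of subsection \ref{sec:right O mod}. I do not anticipate any genuine obstruction; the theorem is really a bookkeeping consequence of the dual formalism that has already been set up.
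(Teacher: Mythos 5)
Your proposal is correct and follows essentially the same route as the paper: morphism preservation via the relation $B_p(f^{C},x)=A_p(x,f)$ established in the duality subsection, and composition preservation by matching the correction terms through the identity $[x,f^{C},g^{C}]=[f,g,x]$ of Remark \ref{rem:[x,f,g]=[f,g,x]}. The extra items you list (well-definedness of $M^{C}$ and preservation of identities) are the routine checks the paper leaves implicit, and your sign-tracking for the correction terms is just an expansion of what the paper delegates to that remark.
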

\begin{proof}
	
	We need to verify two assertions.
	
	\begin{itemize}\item Assertion 1:
		$$\text{For any }f\in \Hom_\mathcal{LO}(M,N),  \quad \mbox{ we have }C(f)\in \Hom_\mathcal{RO}(M^{C},N^{C}). $$

		In fact, {we have proved this in subsection \ref{sec:right O mod}. More precisely, we have }
		$$\re B_p(C(f),x)=\re A_p(x,f)=0,$$
		{which means that $ C(f)\in \Hom_\mathcal{RO}(M^{C},N^{C})$.}
		\item Assertion 2:
		$$C(f\circledcirc_l g)=C(f)\circledcirc_r C(g).$$ Here the subscripts indicate  which type of the regular composition is used.

		Indeed, in view of Remark \ref{rem:[x,f,g]=[f,g,x]}  we have
		\begin{eqnarray*}C(f\circledcirc_l g)(x)&=&(f\circledcirc_l g)(x)
			\\
			&=&f\circ g (x)+[f,g,x]
			\\
			&=&f^{C}\circ g^{C} (x)+[x,f^{C},g^{C}]
			\\ &=&(C(f)\circledcirc_r C(g))(x).
		\end{eqnarray*}
		
	\end{itemize}
	This proves that $C$ is a covariant functor.	
\end{proof}

Similarly, we also  have a covariant functor from $\mathcal{RO}$ to $\mathcal{LO}$, {also denoted by $C$},
\begin{align*}
C:\qquad\quad\mathcal{RO}\quad&\xlongrightarrow[]{\hskip1cm} \quad\mathcal{LO}\\
M\quad&\shortmid\!\xlongrightarrow[]{\hskip1cm}\quad M^{C}\\
f\in \Hom_\mathcal{RO}(M,N)\quad&\shortmid\!\xlongrightarrow[]{\hskip1cm}\quad C(f):=f^{C}.
\end{align*}
For each $f\in \Hom_\mathcal{RO}(M,N)$, the morphism $C(f)$ is just defined to be $f$ as a map of sets.

Obviously, $C$ is an isomorphism between  categories $\mathcal{LO}$ and $\mathcal{RO}$.

\subsection{Hom functors}\label{sec:hom}

We  introduce
the \textbf{Hom functors} between the non-associative  categories $\mathcal{LO}$ and $\mathcal{RO}$.
\begin{thm}\label{thm:Hom functor}
	For any $\O$-bimodule $M$, we have a covariant functor $$\Hom_\mathcal{LO}(M,-):\mathcal{LO}\to\mathcal{LO}
	$$ and a contravariant functor $$\Hom_\mathcal{LO}(-,M):\mathcal{LO}\to\mathcal{RO}.$$
	Moreover, both functors are $\O$-linear.
\end{thm}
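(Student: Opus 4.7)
The plan is to define the covariant functor $T := \Hom_\mathcal{LO}(M,-)$ on objects by $T(N) := \Hom_\mathcal{LO}(M,N)$, an $\O$-bimodule by Theorem \ref{thm:Hom bimod}, and on morphisms by $T(f)(h) := f \circledcirc h$; Proposition \ref{prop:f.g in Hom(M,M')} gives $T(f)(h)\in\Hom_\mathcal{LO}(M,N')$, and $T(1_N) = 1_{T(N)}$ is immediate from the definition of regular composition. The contravariant functor $S := \Hom_\mathcal{LO}(-,M)$ is defined symmetrically by $S(f)(h) := h\circledcirc f$, with its values viewed as $\mathcal{RO}$-objects via the right-module duality of Subsection \ref{sec:right O mod}.

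The first technical task will be to check that $T(f)$ is itself a para-linear map between the Hom-bimodules, i.e., $\re A_r(h,T(f))=0$ for all $r\in\O$ and $h\in T(N)$. Expanding $A_r(h,T(f))(x)=(f\circledcirc(r\odot h))(x)-(r\odot(f\circledcirc h))(x)$ via the action formulas \eqref{eqdef:<x,fr>}, \eqref{eqdef:<x,rf>} and restricting to $x\in\re M$ (legitimate by Corollary \ref{lem: f(huaa M)=0 yields f=0}(i)), the associator terms $[f,\cdot,x]$ vanish by Lemma \ref{lem:vanishing[f,g,x]=0}(ii), and the residue $-[f,h,xr]$ has zero real part because $[f,g,\cdot]$ always lies in $\bigoplus_{j=1}^7 e_j\re N'$ by Definition \ref{def:regular composition}.

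Next, for functoriality $T(g\circledcirc f)=T(g)\circledcirc T(f)$: unfolding the two regular compositions using Definition \ref{def:regular composition} reduces the question to the equality $[g,f,h]=[T(g),T(f),h]$ in $\Hom_\mathcal{LO}(M,N'')$, a reduction that I would carry out by applying the key identity of Lemma \ref{lem:[fgh]=} to pull apart $(g\circledcirc f)\circledcirc h$. Both sides are para-linear in $h$ and have vanishing bimodule real part (the first by Lemma \ref{lem:[fgh]=}, the second by direct computation once $A_{e_j}(h,T(f))(x)=-[f,h,e_j x]$ for $x\in\re M$ is extracted from the residue in the previous paragraph), so pointwise agreement on $\re M$ will suffice by Corollary \ref{lem: f(huaa M)=0 yields f=0}(i) and this last agreement follows by unwinding the definitions on real elements.

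Finally, the $\O$-linearity of the induced map $f\mapsto T(f)$ on Hom-sets, namely $T(r\odot f)=r\odot T(f)$ and $T(f\odot r)=T(f)\odot r$, is verified by pointwise expansion via the five-term identity \eqref{eq:Ap(x,f odot g)=Ap(g(x),f)+f(Ap(x,g))+[f,g,px]-p[f,g,x]} and the bimodule action formulas; after restricting to $\re M$ the identity collapses through Lemma \ref{lem:vanishing[f,g,x]=0}(ii) to the alternativity-driven skew-symmetry of the associator in $\O$, namely $[p,q,r]=[q,r,p]=-[p,r,q]$. The contravariant case $S$ is handled in parallel via the symmetric identities of Subsection \ref{sec:right O mod}. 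I expect the principal obstacle to be the bookkeeping inside the nested Hom-bimodule $\Hom_\mathcal{LO}(T(N),T(N'))$: the various second-associator terms arising from Proposition \ref{thm:mod strc over general mod} must be tracked very carefully before the reduction to the associator identity in $\O$ becomes visible.
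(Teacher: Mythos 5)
Your proposal is correct in substance and shares the paper's overall skeleton---the same definitions of $T=\Hom_\mathcal{LO}(M,-)$ and $S=\Hom_\mathcal{LO}(-,M)$, and the same strategy of reducing every identity between para-linear maps to its restriction to associative elements---but the mechanism of the verifications is genuinely different. The paper's central device is to convert octonionic scalar actions into regular compositions with right-multiplication operators, $p\odot f=f\circledcirc R_p$ and $f\odot p=R_p\circledcirc f$ (Lemmas \ref{lem:Rp prop1} and \ref{lem:Rp prop 2}); with this, every check becomes an instance of the three-fold bracket of Lemma \ref{lem:[fgh]=}: para-linearity of $T(f)$ is the one-line identity $A_p(g,T(f))=-[f,g,R_p]$, whose real part vanishes, while functoriality and $\O$-linearity reduce to $[f_1,f_2,g]=0$ and $[f,R_p,g]=0$ for $\O$-linear $g$. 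You never use the $R_p$ translation and instead expand pointwise on $x\in\re M$; your computation $A_r(h,T(f))(x)=-[f,h,xr]$ is correct and its real part does vanish by the form of the bracket in Definition \ref{def:regular composition}, so your route works, at the cost of the heavier bookkeeping you anticipate. Two small repairs to your citations and reductions. First, the equivalence ``$\re A_r(h,T(f))=0$ iff $\re\bigl(A_r(h,T(f))(x)\bigr)=0$ for all $x\in\re M$'' is Corollary \ref{cor:f in im (U) = f(x) in im (M)}, not Corollary \ref{lem: f(huaa M)=0 yields f=0}; the latter is the tool for the \emph{equalities} $T(g\circledcirc f)=T(g)\circledcirc T(f)$ and $T(r\odot f)=r\odot T(f)$, where the restriction must be to the real part $\re T(X)=\Hom_\O(M,X)$ of the domain Hom-bimodule (i.e.\ to $\O$-linear $h$), not to $\re M$. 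Second, once $h$ is $\O$-linear, both brackets $[g,f,h]$ and $[T(g),T(f),h]$ vanish outright---the former by the last assertion of Lemma \ref{lem:[fgh]=}, the latter because $A_{e_j}(h,T(f))=0$ by Corollary \ref{cor: A_p(x,f)=0 and f(px)=pf(x)}---so no ``vanishing real part'' argument is needed at that stage; equality of the two para-linear maps then follows exactly as in the paper.
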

\begin{proof} For short,
	we denote the two functors above as
	\begin{eqnarray*}
		T &=& \Hom_\mathcal{LO}(M,-),
		\\
		S &=& \Hom_\mathcal{LO}(-,M).
	\end{eqnarray*}
	The definition of $T$  and $S$ is canonical. More precisely,
	\begin{eqnarray*}T(X)&=&\Hom_\mathcal{LO}(M,X),\\  S(X)&=&\Hom_\mathcal{LO}(X,M)
	\end{eqnarray*}
	for any $X\in\obj{LO}$. For each morphism $f\in \Hom_\mathcal{LO}(X,Y),$ we define for every $g\in \Hom_\mathcal{LO}(M,X)$
	\begin{eqnarray*}\fx{g}{T(f)}&:=&f\circledcirc_l g,\\
		%\fx{S(f)}{g}&:=&g\circledcirc_l f
	\end{eqnarray*}
	and  for every $g\in \Hom_\mathcal{LO}(Y,M)$
	\begin{eqnarray*}
		%\fx{g}{T(f)}&:=&f\circledcirc_l g,\\
		\fx{S(f)}{g}&:=&g\circledcirc_l f.
	\end{eqnarray*}
	For simplicity, we omit the subscript `l' since the regular composition is understood well.	
	
	We  show that $T$ is a covariant functor.
	For any $p\in \O$,  $f\in \Hom_\mathcal{LO}(X,Y)$, and $g\in  \Hom_\mathcal{LO}(M,X)$,  it follows from  Lemma \ref{lem:Rp prop 2} that
	\begin{align*}
	A_p(g,T(f))&=f\circledcirc (p\odot g)-p\odot(f\circledcirc g)\\
	&=f\circledcirc(g\circledcirc R_p)-(f\circledcirc g)\circledcirc R_p\\
	&=-[f,g,R_p].
	\end{align*}
	In view of  Lemma \ref{lem:[fgh]=}, we have
	$$\re A_p(g,T(f))=0,$$
	which means $$T(f)\in \Hom_\mathcal{LO}(T(X),T(Y)).$$

	For a pair of morphisms   $$X\stackrel{f_2}{\longrightarrow}{X'}\stackrel{f_1}{\longrightarrow}{X''} $$
	in $\mathcal{LO}$, 		we need to verify
	\begin{eqnarray}\label{pfeq:T(fg)=T(f)T(g))}
	T(f_1\circledcirc f_2)=T(f_1)\circledcirc T(f_2).
	\end{eqnarray}
	%$$T(f_1\circledcirc f_2)=T(f_1)\circledcirc T(f_2).$$
	For any $$g\in\Hom_\O(M,X),$$ we have
	\begin{align*}
	&{\ \ \ \,}\fx{g}{	T(f_1\circledcirc f_2)-T(f_1)\circledcirc T(f_2)}\\
	&=(f_1\circledcirc f_2)\circledcirc g-\big(T(f_1)\circ T(f_2)\big)(g)-[T(f_1),T(f_2),g]&\text{using Lemma \ref{lem:vanishing[f,g,x]=0}}\\
	&=(f_1\circledcirc f_2)\circledcirc g-T(f_1)(f_2\circledcirc g) \\
	&=(f_1\circledcirc f_2)\circledcirc g-f_1\circledcirc(f_2\circledcirc g)\\
	&=[f_1,f_2,g]&\text{using Lemma \ref{lem:[fgh]=}}\\
	&=0.
	\end{align*}
	Notice that  $$\re \Hom_\mathcal{LO}(M,X)=\Hom_\O(M,X)$$
	due to Theorem \ref{thm:Hom kelie left,bi}.
	Since both sides of \eqref{pfeq:T(fg)=T(f)T(g))} are left \almost linear maps,   it follows from Corollary \ref{cor: A_p(x,f)=0 and f(px)=pf(x)} that   \eqref{pfeq:T(fg)=T(f)T(g))} holds.	
	
	Next we come to show that $T$ is an $\O$-linear functor, i.e.,  for any $p\in \O$ and  $f\in \Hom_\mathcal{LO}(X,Y)$,
	\begin{equation*}
	T(p\odot f)=p\odot T(f).
	\end{equation*}
	If $g\in  \re \Hom_\mathcal{LO}(M,X)=\Hom_\O(M,X)$ (by Theorem \ref{thm:Hom kelie left,bi}), then again  from Lemma \ref{lem:[fgh]=}  we have
	\begin{eqnarray*}
		\fx{g}{T(p\odot f)-p\odot T(f)}&=&(p\odot f)\circledcirc g-f\circledcirc (g\odot p)\\
		&=&(f\circledcirc R_p)\circledcirc g-f\circledcirc (R_p\circledcirc g)\\
		&=&[f,R_p,g]\\
		&=&0.
	\end{eqnarray*}
	It follows from Corollary \ref{cor: A_p(x,f)=0 and f(px)=pf(x)}  that $T$ is $\O$-linear.
	
	Secondly, we shall  prove that $S$ is a contravariant functor.
	For any $p\in \O$,  $f\in \Hom_\mathcal{LO}(X,Y)$, and $g\in  \Hom_\mathcal{LO}(Y,M)$,  it follows from  Lemma  \ref{lem:Rp prop1} that 	
	\begin{align*}
	B_p(S(f),g)&=(g\circledcirc f)\odot p-(g\odot p)\circledcirc f\\
	&=R_p\circledcirc(g\circledcirc f)-(R_p\circledcirc g)\circledcirc f\\
	&=-[R_p,g,f].
	\end{align*}
	Hence by Lemma \ref{lem:[fgh]=}, we have $$\re B_p(S(f),g)=0,$$
	which means
	$$S(f)\in \Hom_\mathcal{RO}(S(Y),S(X)).$$

	For a pair of morphisms $$X\stackrel{f_2}{\longrightarrow}{X'}\stackrel{f_1}{\longrightarrow}{X''}$$ in $\mathcal{LO}$, we need to show
	$$S(f_1\circledcirc f_2)=S(f_2)\circledcirc S(f_1).$$
	In fact, for any $$g\in \re \Hom_\mathcal{LO}(Y,M)=\Hom_\O(Y,M),$$ we have
	\begin{align*}
	\fx{	S(f_1\circledcirc f_2)-S(f_2)\circledcirc S(f_1)}{g}
	&=g\circledcirc (f_1\circledcirc f_2)-(g\circledcirc f_1 )\circledcirc f_2\\
	&=-[g,f_1,f_2]\\
	&=0.
	\end{align*}
	Then it follows from Corollary \ref{cor: A_p(x,f)=0 and f(px)=pf(x)} that $S$ is a contravariant functor.
	
	Finally, it remains
	to show that $S$ is an $\O$-linear functor, i.e., for any $p\in \O$ and  $f\in \Hom_\mathcal{LO}(X,Y)$,
	\begin{equation*}
	S(p\odot f)=p\odot S(f).
	\end{equation*}
	Let $g\in  \re \Hom_\mathcal{LO}(M,Y)$. Then
	\begin{eqnarray*}
		\fx{S(p\odot f)-p\odot S(f)}{g}&=&g\circledcirc (p\odot f)-p\odot (g\circledcirc f)\\
		&=&g\circledcirc (f\circledcirc R_p)- (g\circledcirc f)\circledcirc R_p\\
		&=&-[g,f,R_p]\\
		&=&0.
	\end{eqnarray*}
	It follows from Corollary \ref{cor: A_p(x,f)=0 and f(px)=pf(x)} again that $S$ is $\O$-linear.
	This completes the proof.
\end{proof}

\begin{rem}
	Similarly, we also have a covariant functor $$\Hom_\mathcal{RO}(M,-):\mathcal{RO}\to\mathcal{RO}
	$$ and a contravariant functor $$\Hom_\mathcal{RO}(-,M):\mathcal{RO}\to\mathcal{LO}.$$

	In contrast to the classical associative case where the target category of a Hom functor is always the \text{\bf{Sets}} category, in the non-associative case the Hom functors admit a non-associative  target category. Moreover, the weak Hom functors introduced in Example \ref{eg:w-hom} have a close relation with Hom functors. For example, we have
	$$w-\Hom_\mathcal{LO}(M,-)=U\circ \Hom_\mathcal{LO}(M,-),$$
	where $U$ is the forgetful functor introduced in Example \ref{eg:forget}. This provides an alternative viewpoint to see that $w-\Hom_\mathcal{LO}(M,-)$ is a weak functor.

	We shall always denote $$f_{*_l}:=\Hom_\mathcal{LO}(M,-)(f):\Hom_\mathcal{LO}(M,X)\to \Hom_\mathcal{LO}(M,Y),$$
	$$ f^{*_l}:=\Hom_\mathcal{LO}(-,M)(f):\Hom_\mathcal{LO}(Y,M)\to \Hom_\mathcal{LO}(X,M)$$ for any  $X\stackrel{f}{\longrightarrow}{Y}$ in  $\mathcal{LO}$.
	Similar notations can be defined   for $\mathcal{RO}$.

	{Theorem \ref{thm:Hom functor} provides  a fundamental fact in the study of octonionic functional analysis.  The Hom functors can be used to define the Banach dual operator and Hilbert dual operator. This answers that why  the  seemly right method to define the Hilbert dual of an operator $T$ as
		$$\fx{Tx}{y}=\fx{x}{Ty}$$ does not work in an octonionic Hilbert space \cite{goldstine1964hilbert}.  As a direct consequence of Theorem \ref{thm:Hom functor}, we have    the basic  properties of  dual operators:
		\begin{eqnarray*}
			&&(p\odot f)^{*_l}=p\odot f^{*_l},
			\\ && (f\odot p)^{*_l}=f^{*_l}\odot p,
			\\
			&&(f\circledcirc g)^{*_l}=g^{*_l}\circledcirc f^{*_l}
		\end{eqnarray*}
		for any $p\in \O$ and $X\stackrel{g}{\longrightarrow}{Y}\stackrel{f}{\longrightarrow}{Z}$.
	}
\end{rem}

As a consequence, we have a covariant functor $$\Hom_\mathcal{RO}(\Hom_\mathcal{LO}(-,\O),\O):\mathcal{LO}\to \mathcal{RO}\to \mathcal{LO}.$$
We denote by $1_{\mathcal{LO}}$ the identity functor. Then the embedding $\O$-homomorphism $$\tau_X:X\to X^{**}$$
introduced in Subsection \ref{sec:semi-reflex}  is a natural transformation of functors. This can be seen from the following lemma.

\begin{lemma}
	Let $X$ and $Y$ be two   $\spo$-bimodules. Then for any   $\phi\in \Hom_\mathcal{LO}(X,Y)$, we have
	\begin{eqnarray}\label{eq:T''x''=(Tx)''}
	\phi^{*_l*_r}\circledcirc_l\tau_X=\tau_Y\circledcirc_l \phi.
	\end{eqnarray}  %$$\phi^{*_l*_r}\circ\tau=\tau\circ \phi.$$
\end{lemma}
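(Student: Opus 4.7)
The plan is to verify the naturality identity by reducing it to the familiar double-dual calculation, pairing against $\O$-linear functionals on $Y$. First I will observe that both sides lie in $\Hom_\mathcal{LO}(X,Y^{**})$: indeed $\phi^{*_l}\in\Hom_\mathcal{RO}(Y^*,X^*)$ and $\phi^{*_l*_r}\in\Hom_\mathcal{LO}(X^{**},Y^{**})$ follow from Theorem \ref{thm:Hom functor} together with its right-module analog, both $\tau_X$ and $\tau_Y$ are $\O$-linear as shown in Subsection \ref{sec:semi-reflex}, and Proposition \ref{prop:f.g in Hom(M,M')} guarantees closure under regular composition. Because $\tau_X$ is $\O$-linear, Lemma \ref{lem:vanishing[f,g,x]=0}(i) collapses the left-hand side to the ordinary composition $\phi^{*_l*_r}\circ\tau_X$; because $\tau_Y$ is $\O$-linear, Lemma \ref{lem:vanishing[f,g,x]=0}(iii) collapses the right-hand side to $\tau_Y\circ\phi$. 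Thus it suffices to prove
$$\phi^{*_l*_r}(\tau_X(x))=\tau_Y(\phi(x))\quad\text{in }Y^{**}$$
for every $x\in X$.

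Both sides are right para-linear functionals on $Y^*$, so by the right-module analog of Corollary \ref{lem: f(huaa M)=0 yields f=0}(i) it is enough to test equality on $\re Y^* = \Hom_\O(Y,\O)$ (using Theorem \ref{thm:r-Hom(M,M') free}). Fix $g \in \Hom_\O(Y,\O)$. Since $g$ is $\O$-linear, Lemma \ref{lem:vanishing[f,g,x]=0}(iii) gives $\phi^{*_l}(g) = g\circledcirc \phi = g\circ\phi$. Expanding the right regular composition from Definition \ref{def:right mod regular composition},
$$\phi^{*_l*_r}(\tau_X(x))(g) = (\tau_X(x)\circledcirc_r \phi^{*_l})(g) = \tau_X(x)(\phi^{*_l}(g))+[g,\tau_X(x),\phi^{*_l}].$$
The bracket vanishes because $g\in\re Y^*$ forces $B_{e_j}(\phi^{*_l},g)=0$ for every $j$, by the right-module analog of Corollary \ref{cor: A_p(x,f)=0 and f(px)=pf(x)}(i) applied to the right para-linear map $\phi^{*_l}$. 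Consequently
$$\phi^{*_l*_r}(\tau_X(x))(g) = \tau_X(x)(g\circ\phi) = (g\circ\phi)(x) = g(\phi(x)) = \tau_Y(\phi(x))(g),$$
completing the verification.

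The only genuine obstacle is bookkeeping: one has to combine the correct vanishing clauses of Lemma \ref{lem:vanishing[f,g,x]=0} with their right-module mirrors so that the left-module composition on the outside and the right-module composition on the inside both collapse to ordinary compositions at exactly the places the argument needs them. Once this is arranged, the identity reduces to the classical double-dual naturality statement $g(\phi(x)) = \tau_Y(\phi(x))(g)$.
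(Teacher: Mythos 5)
Your proof is correct, and it reaches the identity by a genuinely lighter route than the paper's. Both arguments begin identically: since $\tau_X$ and $\tau_Y$ are $\O$-linear, Lemma \ref{lem:vanishing[f,g,x]=0} collapses both regular compositions to ordinary ones, reducing the claim to $\phi^{*_l*_r}\circ\tau_X=\tau_Y\circ\phi$, and both then evaluate $\phi^{*_l*_r}(\tau_X(x))=\tau_X(x)\circledcirc_r\phi^{*_l}$ against a functional on $Y$. The difference is in how the nonassociative correction term $[f,\tau_X(x),\phi^{*_l}]$ is handled. The paper takes an \emph{arbitrary} $f\in Y^{*_l}$, computes $B_p(\phi^{*_l},f)=-[R_p,f,\phi]$ via Lemma \ref{lem:Rp prop1}, and then uses identity \eqref{eq:[f,g,h]=} together with the fact that $[f,\phi,x]$ is purely imaginary to show the bracket equals $-[f,\phi,x]$, which exactly cancels the correction hidden in $\fx{x}{\phi^{*_l}(f)}=(f\circledcirc\phi)(x)$. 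You instead test only against $g\in\re Y^{*_l}=\Hom_\O(Y,\O)$: such a $g$ is an associative element of the bimodule $Y^{*_l}$, so $B_{e_j}(\phi^{*_l},g)=0$ and the bracket vanishes outright, and the uniqueness principle (the right-module analogue of Corollary \ref{lem: f(huaa M)=0 yields f=0}) upgrades equality on $\re Y^{*_l}$ to equality in $Y^{**}$. Your route avoids Lemma \ref{lem:Rp prop1} and the five-term associator identity entirely, at the cost of invoking the restriction-to-real-part argument; the paper's computation is longer but exhibits the explicit cancellation for every $f$, not just the $\O$-linear ones. One small slip: the identification $\re Y^{*_l}=\Hom_\O(Y,\O)$ is Theorem \ref{thm:Hom kelie left,bi} (the statement for $\Hom_{\mathcal{LO}}$), not Theorem \ref{thm:r-Hom(M,M') free}; this does not affect the argument.
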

\begin{proof}
	Note that we have shown in Subsection \ref{sec:semi-reflex} that $\tau_X $
is an $\O$-linear map. Hence \eqref{eq:T''x''=(Tx)''} is equivalent to $$\phi^{*_l*_r}\circ\tau_X=\tau_Y\circ \phi.$$

	 For any $x\in X$ and any $f\in Y^{*_l}$, we  denote $x''=\tau_X (x)$ as usual, then
	\begin{eqnarray*}
		\fx{\phi^{*_l*_r}(x'')}{f}&=&(x''\circledcirc_r \phi^{*_l})(f)\\
		&=&x''(\phi^{*_l}(f))+[f,x'',\phi^{*_l}]\\
		&=&\fx{x}{\phi^{*_l}(f)}+[f,x'',\phi^{*_l}].
	\end{eqnarray*}
	By Definition \ref{def:right mod regular composition}, we get
	\begin{eqnarray*}
		[f,	x'',\phi^{*_l}]&=&\sum_{j=1}^7\big(\re \fx{x''}{B_{e_j}(\phi^{*_l},f)}\big)e_j\\
		&=&\sum_{j=1}^7 \big(\re \fx{x}{B_{e_j}(\phi^{*_l},f)}\big)e_j.
	\end{eqnarray*}
It follows from Lemma \ref{lem:Rp prop1} that for all $p\in \O$,
\begin{eqnarray*}
B_p(\phi^{*_l},f)&=&\phi^{*_l}(f)\odot p-\phi^{*_l}(f\odot p)\\
&=&(f\circledcirc \phi)\odot p-(f\odot p)\circledcirc \phi\\
&=&R_p\circledcirc(f\circledcirc \phi)-(R_p\circledcirc f)\circledcirc \phi\\
&=&-[R_p,f,\phi].
\end{eqnarray*}
In view of
\eqref{eq:[f,g,h]=}, we thus obtain
\begin{eqnarray*}
	\sum_{j=1}^7 \big(\re \fx{x}{B_{e_j}(\phi^{*_l},f)}\big)e_j&=&-\sum_{j=1}^7 \big(\re \fx{x}{[R_{e_j},f,\phi]}\big)e_j\\
	&=&\sum_{j=1}^7 \big(\re R_{e_j}([f,\phi,x])\big)e_j\\
	&=&-[f,\phi,x].
\end{eqnarray*}
	We  used the fact that $[f,\phi,x]\in \pureim{\spo}$  in the last equality.
	Therefore  $\phi^{*_l*_r}(x'')=(\phi (x))''$ for all $x\in X$. That is, $\phi^{*_l*_r}\circ\tau_X=\tau_Y\circ \phi$.
\end{proof}

\subsection{Tensor functors}
In this subsection, we introduce the tensor functors between  non-associative categories $\mathcal{LO}$ and $\mathcal{RO}$.

 We define
the  tensor product of two $\O$-bimodules $M$ and  $M'$ as
\begin{eqnarray}\label{eqdef:MotimesO M'}
M\otimes_\O M':=(\re M\otimes_\R \re M')\otimes_\R\O.
\end{eqnarray}
Then $M\otimes_\O M'$ is an $\O$-bimodule and
$$\re (M\otimes_\O M')= \re M\otimes_\R \re M'.$$
For  $m\in M$ and $m'\in M'$, we write $$m=\sum_{i=0}^7e_im_i,\qquad m'=\sum_{i=0}^7e_im'_i,$$ where $m_i\in \re M$ and $m'_i\in \re M'$. We define
\begin{eqnarray*}
	m\otimes_\O m':=\sum_{i,j=0}^7(m_i\otimes_\R m'_j)\otimes_\R (e_ie_j).
\end{eqnarray*}
We remark that in general for any $p\in \O$, $$mp\otimes_\O m'\neq m\otimes_\O pm'.$$
Instead, by direct calculation, we have the following identity
\begin{eqnarray}\label{eq:mpotm'}
mp\otimes_\O m'-m\otimes_\O pm'=\sum_{i,j=0}^7(m_i\otimes_\R m'_j)\otimes_\R [e_i,p,e_j].
\end{eqnarray}

We shall use the specific case of \eqref{eq:mpotm'} and its variants as follows.
\begin{lemma}\label{lem:tf}
	Let $M$,  $M'$ be two $\O$-bimodules and $p\in \O$, $m\in M$, $m'\in M'$.
	If one of the three elements $p$, $m$ and $m'$ is an associative element, then we have
	\begin{eqnarray}\label{eq:p(motimesO m')=pmotimesO m'}
	p(m\otimes_\O m')=pm\otimes_\O m',\label{eq:tf1}\\
	mp\otimes_\O m'=m\otimes_\O pm',\label{eq:tf2}\\
	(m\otimes_\O m')p=m\otimes_\O m'p.\label{eq:tf3}
	\end{eqnarray}
\end{lemma}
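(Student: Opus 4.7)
The plan is to derive explicit ``defect'' identities for each of the three equalities in the style of \eqref{eq:mpotm'}, and then observe that in each of the three cases the defects vanish term-by-term.

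Identity \eqref{eq:tf2} is precisely the content of \eqref{eq:mpotm'}, which is already recorded in the paper, so nothing further is needed there. For \eqref{eq:tf1} and \eqref{eq:tf3}, my first step is to establish the analogous formulas
\begin{align*}
p(m\otimes_\O m')-pm\otimes_\O m' &= -\sum_{i,j=0}^7(m_i\otimes_\R m'_j)\otimes_\R[p,e_i,e_j],\\
(m\otimes_\O m')p-m\otimes_\O m'p &= \sum_{i,j=0}^7(m_i\otimes_\R m'_j)\otimes_\R[e_i,e_j,p].
\end{align*}
These are obtained by a straightforward expansion using the definition \eqref{eqdef:MotimesO M'} of $\otimes_\O$ together with the fact that each $m_i\in\re M$ and $m'_j\in\re M'$ is an associative element lying in $\hua{Z}{}{M}$, so that all associators involving $m_i$ or $m'_j$ vanish and products like $p(e_im_i)$ reassemble cleanly. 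The computation is of the same flavour as the one producing \eqref{eq:mpotm'}, the key algebraic input being $p(e_ie_j)=(pe_i)e_j-[p,e_i,e_j]$ and its right-handed analogue.

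Once these defect identities are available, all three cases of the hypothesis are handled uniformly. If $p\in\re\O=\R$, then every associator $[p,e_i,e_j]$, $[e_i,p,e_j]$, and $[e_i,e_j,p]$ vanishes, since an associator in the alternative algebra $\O$ involving a real scalar is zero. If $m\in\re M$, then $m_i=0$ for $i\neq 0$ and $m_0=m$, so every defect sum collapses to terms containing $[\,\cdot\,,e_0,\,\cdot\,]=[\,\cdot\,,1,\,\cdot\,]=0$. The case $m'\in\re M'$ is symmetric, reducing to terms containing $[\,\cdot\,,\,\cdot\,,e_0]=0$.

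The only real obstacle I anticipate is organizational: deriving the two analogues of \eqref{eq:mpotm'} requires carefully tracking which associators appear and where, especially for \eqref{eq:tf3} where both the product $e_ie_j$ and the right multiplication by $p$ must be reshuffled. Once those defect formulas are in hand, the three-case verification is immediate from alternativity.
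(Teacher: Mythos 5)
Your proposal is correct and follows the same route as the paper: the paper's own proof simply invokes the defect identity \eqref{eq:mpotm'} for \eqref{eq:tf2} and declares \eqref{eq:tf1} and \eqref{eq:tf3} "similar", which is exactly the pair of analogous defect formulas you write down, and your three-case vanishing argument (a real entry or an $e_0=1$ entry kills every associator in the sum) is the intended justification. You have merely supplied the details the paper leaves implicit, and your two auxiliary identities check out.
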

\begin{proof}
	Assertion \eqref{eq:tf2} follows from \eqref{eq:mpotm'} directly. The proof of \eqref{eq:tf1} and \eqref{eq:tf3} is similar.
\end{proof}

 Now we come to define  tensor functors.
Fix an $\O$-bimodule $M$.
For any morphism $X\stackrel{f}{\longrightarrow}X'$, we introduce a map
\begin{eqnarray*}
	f_M:\re M\otimes_\R \re X&\to&  M\otimes_\O  X'\\
	m\otimes_\R x &\mapsto&m\otimes_\O f(x).
\end{eqnarray*}
 By  the left \almost linear extension defined in Lemma \ref{lem:ext}, $f_M$ induces a left \almost linear map:
$$l\text{-}\ext f_M\in \Hom_\mathcal{LO}(M\otimes_\O X,M\otimes_\O X').$$
In view of identities \eqref{eq:tf1} and \eqref{eq:tf2}, by direct calculations we obtain that
\begin{eqnarray}\label{eq:tensor product}
l\text{-}\ext f_M(m\otimes_\O x)=m\otimes_\O f(x)
\end{eqnarray}
for all $m\in \re M$   and all $x\in X$.
We thus have a natural tensor functor
\begin{align*}
M\otimes^{ll}_\O-:\qquad\mathcal{LO}\qquad&\xlongrightarrow[]{\hskip1cm} \qquad\mathcal{LO}\\
X\qquad&\shortmid\!\xlongrightarrow[]{\hskip1cm} \qquad M\otimes_\O X\\
X\stackrel{f}{\to}X'\qquad&\shortmid\!\xlongrightarrow[]{\hskip1cm} \qquad l\text{-}\ext f_M.
\end{align*}

\begin{thm}
	For any $\O$-bimodule $M$, the tensor functor	$$M\otimes^{ll}_\O-:\mathcal{LO}\xlongrightarrow[]{\hskip1cm} \mathcal{LO} $$
	 is	a covariant functor.
\end{thm}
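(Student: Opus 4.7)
The plan is to verify the two functorial axioms: preservation of identities and preservation of composition. The functor is well-defined on morphisms because Lemma \ref{lem:ext} guarantees that $l\text{-}\ext f_M\in \Hom_\mathcal{LO}(M\otimes_\O X,M\otimes_\O X')$ for every $f\in\Hom_\mathcal{LO}(X,X')$. Throughout, the strategy is to reduce all equalities of left para-linear maps to equalities on the real part $\re(M\otimes_\O X)=\re M\otimes_\R \re X$, and then invoke Corollary \ref{lem: f(huaa M)=0 yields f=0} (uniqueness: a para-linear map vanishing on the real part vanishes identically).

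First, I would check $M\otimes^{ll}_\O(1_X)=1_{M\otimes_\O X}$. On any generator $m\otimes_\O x$ with $m\in\re M$ and $x\in\re X$, the left-hand side is $m\otimes_\O 1_X(x)=m\otimes_\O x$ by \eqref{eq:tensor product}, which equals the right-hand side. Since both sides are left para-linear, Corollary \ref{lem: f(huaa M)=0 yields f=0} closes this step.

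For composition, given $X\stackrel{g}{\to} X'\stackrel{f}{\to}X''$, I aim to prove
\[
l\text{-}\ext (f\circledcirc g)_M=l\text{-}\ext f_M\circledcirc l\text{-}\ext g_M.
\]
By Corollary \ref{lem: f(huaa M)=0 yields f=0}, it suffices to check this on $m\otimes_\O x$ with $m\in\re M$, $x\in\re X$. The left-hand side evaluates (via Lemma \ref{lem:vanishing[f,g,x]=0}, which kills $[f,g,x]$ when $x\in\re X$) to $m\otimes_\O f(g(x))$. For the right-hand side, the element $m\otimes_\O x$ lies in $\re(M\otimes_\O X)$, so Lemma \ref{lem:vanishing[f,g,x]=0} again collapses the regular composition into the ordinary one, giving $l\text{-}\ext f_M(m\otimes_\O g(x))$. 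Writing $g(x)=\sum_{i=0}^{7}e_i g_i(x)$ with $g_i(x)\in\re X'$ (Theorem \ref{thm: f in Hom(M,M')  equivalent:}), Lemma \ref{lem:tf} (with $m\in\re M$ an associative element) yields $m\otimes_\O g(x)=\sum_i e_i(m\otimes_\O g_i(x))$. Since each $m\otimes_\O g_i(x)\in \re(M\otimes_\O X')$, Corollary \ref{cor: A_p(x,f)=0 and f(px)=pf(x)} lets me pull the $e_i$ through the para-linear map $l\text{-}\ext f_M$, and \eqref{eq:tensor product} gives $l\text{-}\ext f_M(m\otimes_\O g_i(x))=m\otimes_\O f(g_i(x))$. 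Hence
\[
l\text{-}\ext f_M(m\otimes_\O g(x))=\sum_i e_i(m\otimes_\O f(g_i(x)))=m\otimes_\O \sum_i e_if(g_i(x)),
\]
again via Lemma \ref{lem:tf}. Finally, since $g_i(x)\in\re X'$, Corollary \ref{cor: A_p(x,f)=0 and f(px)=pf(x)} gives $f(e_ig_i(x))=e_if(g_i(x))$, so $\sum_i e_if(g_i(x))=f(g(x))$. Thus both sides equal $m\otimes_\O f(g(x))$.

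The only subtle point is that the generators $m\otimes_\O x$ must live in the real part of the tensor product for Lemma \ref{lem:vanishing[f,g,x]=0} and \eqref{eq:tensor product} to apply cleanly; the bookkeeping of when an element is associative (so that the identities of Lemma \ref{lem:tf} and the $\O$-linear identities $f(py)=pf(y)$ hold) is the main thing one has to be careful about, but no genuine obstacle arises. Putting the three steps together proves that $M\otimes^{ll}_\O-$ is a covariant functor.
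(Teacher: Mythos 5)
Your proposal is correct and follows essentially the same route as the paper: reduce everything to generators $m\otimes_\O x$ in $\re(M\otimes_\O X)=\re M\otimes_\R\re X$, use Lemma \ref{lem:vanishing[f,g,x]=0} to collapse the regular compositions into ordinary ones there, show both sides equal $m\otimes_\O f(g(x))$, and conclude by the uniqueness statement of Corollary \ref{lem: f(huaa M)=0 yields f=0}. The only difference is cosmetic: you spell out the "direct calculation" behind \eqref{eq:tensor product} (decomposing $g(x)=\sum_i e_ig_i(x)$ and pulling the $e_i$ through via Lemma \ref{lem:tf} and Corollary \ref{cor: A_p(x,f)=0 and f(px)=pf(x)}), which the paper cites without proof, and you also explicitly verify preservation of identities.
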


\begin{proof}

	To show $	M\otimes^{ll}_\O-$ is  a covariant functor, it only needs to verify	that
	\begin{eqnarray}\label{eqpf:tensor functor}
	l\text{-}\ext f_M \circledcirc l\text{-}\ext g_M=l\text{-}\ext (f\circledcirc g)_M
	\end{eqnarray}
	for any morphisms  $$X\stackrel{g}{\longrightarrow}X'\stackrel{f}{\longrightarrow}X''$$
	 in $\mathcal{LO}$.

	Let $m\in \re M$ and $x\in\re X $.
	It follows that $m\otimes_\O x\in \re (M\otimes_\O X)$.
	Then by Lemma \ref{lem:vanishing[f,g,x]=0}, we have
	\begin{eqnarray}\label{eqpf:tensor product}
	(l\text{-}\ext f_M \circledcirc l\text{-}\ext g_M)(m\otimes_\O x)&=&(l\text{-}\ext f_M \circ l\text{-}\ext g_M)(m\otimes_\R x)\notag\\
	&=&l\text{-}\ext f_M \, (m\otimes_\O g(x)).
	\end{eqnarray}
	Combining \eqref{eqpf:tensor product} with  \eqref{eq:tensor product}, we obtain
	\begin{eqnarray}\label{eqpf:tf l}
	(l\text{-}\ext f_M \circledcirc l\text{-}\ext g_M)(m\otimes_\O x)&=&m\otimes_\O f\circ g(x).
	\end{eqnarray}
	Using \eqref{eq:tensor product} again, we get
	\begin{eqnarray}\label{eqpf:tf r}
	l\text{-}\ext (f\circledcirc g)_M  (m\otimes_\O x)=m\otimes_\O (f\circledcirc g)(x)=m\otimes_\O f\circ g(x).
	\end{eqnarray}
	It follows from  \eqref{eqpf:tf l}and  \eqref{eqpf:tf r} that
	$$(l\text{-}\ext f_M \circledcirc l\text{-}\ext g_M)(m\otimes_\O x)=l\text{-}\ext (f\circledcirc g)_M  (m\otimes_\O x).$$
	Note that $$\re (M\otimes_\O X)= \re M\otimes_\R \re X.$$
	Hence \eqref{eqpf:tensor functor}  follows from Corollary \ref{lem: f(huaa M)=0 yields f=0}.
\end{proof}	

Similarly, we can also define three tensor functors $M\otimes^{lr}_\O-$, $M\otimes^{rr}_\O-$ and $M\otimes^{rl}_\O-$. They are defined as follows
\begin{align*}
M\otimes^{lr}_\O-:\qquad\mathcal{LO}\qquad&\xlongrightarrow[]{\hskip1cm}  \qquad\mathcal{RO}\\
X\qquad&\shortmid\!\xlongrightarrow[]{\hskip1cm}  \qquad M\otimes_\O X\\
X\stackrel{f}{\to}X'\qquad&\shortmid\!\xlongrightarrow[]{\hskip1cm}  \qquad r\text{-}\ext f_M;
\end{align*}
\begin{align*}
M\otimes^{rr}_\O-:\qquad\mathcal{RO}\qquad&\xlongrightarrow[]{\hskip1cm}  \qquad\mathcal{RO}\\
X\qquad&\shortmid\!\xlongrightarrow[]{\hskip1cm}  \qquad M\otimes_\O X\\
X\stackrel{f}{\to}X'\qquad&\shortmid\!\xlongrightarrow[]{\hskip1cm}  \qquad r\text{-}\ext f_M;
\end{align*}
\begin{align*}
M\otimes^{rl}_\O-:\qquad\mathcal{RO}\qquad&\xlongrightarrow[]{\hskip1cm}  \qquad\mathcal{LO}\\
X\qquad&\shortmid\!\xlongrightarrow[]{\hskip1cm}  \qquad M\otimes_\O X\\
X\stackrel{f}{\to}X'\qquad&\shortmid\!\xlongrightarrow[]{\hskip1cm}  \qquad l\text{-}\ext f_M.
\end{align*}

We shall denote $$(	M\otimes^{ll}_\O-)(f):=1_M\otimes^{ll} f:\quad M\otimes_\O X\to M\otimes_\O X'$$ for $X\stackrel{f}{\longrightarrow}X'$ in category $\mathcal{LO}$ and denote the similar notations for  other tensor functors.

\subsection{{Adjoint functor theorem}}

It turns out that the two functors $$(M\otimes^{ll}_\O-, \quad \Hom_\mathcal{LO}(M,-))$$ constitute an \textbf{adjoint pair}.

\begin{mydef}
	Let $\mathcal{C}$ and $ \mathcal{D}$ be two non-associative categories.
	Let $F: \mathcal{C}\to \mathcal{D}$ and $G: \mathcal{D}\to \mathcal{C}$ be covariant functors. The
	ordered pair $(F, G)$ is an \textbf{adjoint pair} if, for each $C\in \obj{C}$ and $D\in \obj{D}$,
	there are bijections
	$$\tau_{C,D}: w-\Hom_{\mathcal{D}}(FC,D)\to w-\Hom_{\mathcal{C}}(C,GD)$$
	that are natural transformations in ${C}$ and in ${D}$.
\end{mydef}
We get the adjoint functor theorem for $\O$-bimodules as follows:
\begin{thm}
	Let $M$ be an  $\O$-bimodule. Then
	$(M\otimes^{ll}_\O-, \Hom_\mathcal{LO}(M,-))$ is an {adjoint pair}.
	
	More precisely, we have an isomorphism of $\O$-bimodules for any $\O$-bimodules $X,\,Y$:
	$$\tau_{X,Y}:\Hom_\mathcal{LO}(M\otimes_\O X,Y)\to \Hom_\mathcal{LO}( X,\Hom_\mathcal{LO}(M,Y)).$$
	Moreover, the following two diagrams commute
	for all $f : X'\to X$ in $\mathcal{LO}$ and $g : Y\to Y'$ in $\mathcal{LO}$:
	$$\begin{array}[c]{ccc}
	\Hom_\mathcal{LO}(M\otimes_\O X,Y)&\stackrel{\tau_{X,Y}}{\xlongrightarrow[]{\hskip1cm}}&\Hom_\mathcal{LO}( X,\Hom_\mathcal{LO}(M,Y))\\
	\Biggm\downarrow\scriptstyle{(1_M\otimes^{ll} f)^{*_l}}&&\Biggm\downarrow\scriptstyle{ f^{*_l}}\\
	\Hom_\mathcal{LO}(M\otimes_\O X',Y)&\stackrel{\tau_{X',Y}}{\xlongrightarrow[]{\hskip1cm}}&\Hom_\mathcal{LO}( X',\Hom_\mathcal{LO}(M,Y));
	\end{array}$$
	
	$$\begin{array}[c]{ccc}
	\Hom_\mathcal{LO}(M\otimes_\O X,Y)&\stackrel{\tau_{X,Y}}{\xlongrightarrow[]{\hskip1cm}}&\Hom_\mathcal{LO}( X,\Hom_\mathcal{LO}(M,Y))\\
	\Biggm\downarrow\scriptstyle{g_{*_l}}&&\Biggm\downarrow\scriptstyle{ (\Hom_\mathcal{LO}(M,g)_{*_l}}\\
	\Hom_\mathcal{LO}(M\otimes_\O X,Y')&\stackrel{\tau_{X,Y'}}{\xlongrightarrow[]{\hskip1cm}}&\Hom_\mathcal{LO}( X,\Hom_\mathcal{LO}(M,Y')).
	\end{array}$$
\end{thm}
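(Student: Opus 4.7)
The plan is to reduce the adjunction to the classical $\R$-linear Hom--Tensor adjunction on real parts, and then transport it back to the non-associative setting via the $l\text{-}\ext$ bijection of Lemma \ref{lem:ext}. The key observation is that both $\Hom_\mathcal{LO}(M\otimes_\O X,Y)$ and $\Hom_\mathcal{LO}(X,\Hom_\mathcal{LO}(M,Y))$ are uniquely determined by their values on associative (real) arguments, where, thanks to Corollary \ref{cor: A_p(x,f)=0 and f(px)=pf(x)} and Lemma \ref{lem:vanishing[f,g,x]=0}, all second associators and all regular-composition corrections vanish.

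Concretely, I would define $\tau_{X,Y}$ as follows. Given $\phi\in\Hom_\mathcal{LO}(M\otimes_\O X,Y)$, the identity $\re(M\otimes_\O X)=\re M\otimes_\R\re X$ yields an $\R$-bilinear map $(m,x)\mapsto\phi(m\otimes_\O x)$ on $\re M\times\re X$. Classical currying gives an $\R$-linear map $\re X\to \Hom_\R(\re M,Y)$; one application of Lemma \ref{lem:ext} in the $m$-variable produces, for each $x\in\re X$, an element of $\Hom_\mathcal{LO}(M,Y)$, and a second application in the $x$-variable---noting that $\Hom_\mathcal{LO}(M,Y)$ is an $\O$-bimodule by Theorem \ref{thm:Hom bimod}---delivers $\tau_{X,Y}(\phi)\in\Hom_\mathcal{LO}(X,\Hom_\mathcal{LO}(M,Y))$. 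The inverse map is symmetric: given $\psi$, the data $\psi(x)(m)$ for $x\in\re X$, $m\in\re M$ extend via $l\text{-}\ext$ to an element of $\Hom_\mathcal{LO}(M\otimes_\O X,Y)$. Bijectivity then follows from the classical tensor-hom adjunction over $\R$ combined with the bijectivity of $l\text{-}\ext$.

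To upgrade $\tau_{X,Y}$ to an isomorphism of $\O$-bimodules, I would verify $\tau(r\odot\phi)=r\odot\tau(\phi)$ and $\tau(\phi\odot r)=\tau(\phi)\odot r$ for all $r\in\O$. By Corollary \ref{lem: f(huaa M)=0 yields f=0}, it suffices to evaluate both sides on associative arguments $x\in\re X$ and $m\in\re M$; there, all relevant second associators vanish by Corollary \ref{cor: A_p(x,f)=0 and f(px)=pf(x)}, so the formulas \eqref{eqdef:<x,fr>} and \eqref{eqdef:<x,rf>} collapse to the naive actions $(r\odot\phi)(y)=\phi(yr)$ and $(\phi\odot r)(y)=\phi(y)r$. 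Matching the two sides then reduces to the identities $m\otimes_\O(xr)=(m\otimes_\O x)r=mr\otimes_\O x$ for associative $m$, $x$, which are special cases of Lemma \ref{lem:tf}.

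For the two naturality squares, both routes in the first square send $\phi$ to the para-linear map determined on real elements by $(x,m)\mapsto\phi(m\otimes_\O f(x))$: on one path this uses the explicit formula \eqref{eq:tensor product} for $1_M\otimes^{ll} f$ combined with Lemma \ref{lem:vanishing[f,g,x]=0} to collapse $\circledcirc$ to $\circ$; on the other path, $f^{*_l}$ acts as precomposition by $f$ on associative arguments. The second square is handled analogously with $g_{*_l}$ and $\Hom_\mathcal{LO}(M,g)_{*_l}$ acting as postcomposition by $g$. The main obstacle throughout will be the $\O$-bimodule verification: the octonionic scalar actions on both Hom-spaces carry second-associator corrections, and the right $\O$-action on $M\otimes_\O X$ is itself governed by the subtle identity \eqref{eq:mpotm'}. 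The cleanest way to navigate this is the uniform strategy of evaluating on associative arguments, where all corrections collapse simultaneously, and then invoking Corollary \ref{lem: f(huaa M)=0 yields f=0} to lift the verified identities to the full bimodules.
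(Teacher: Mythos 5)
Your proposal is correct and follows essentially the same route as the paper: define $\tau_{X,Y}$ by prescribing $\langle m,\tau_{X,Y}(\phi)(x)\rangle=\phi(m\otimes_\O x)$ on associative arguments and applying the para-linear extension of Lemma \ref{lem:ext} twice, then verify $\O$-linearity and the naturality squares only on real elements (where all second-associator and $\circledcirc$-corrections vanish) and lift via Corollary \ref{lem: f(huaa M)=0 yields f=0}. Your explicit appeal to the classical $\R$-linear currying for bijectivity is a minor elaboration of what the paper dismisses as trivial, not a different argument.
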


\begin{proof}
	We first define the map $\tau_{X,Y}$ by extension.
	For any  $\alpha\in 	\Hom_\mathcal{LO}(M\otimes_\O X,Y)$, and $x\in \re X$, we define $$\fx{m}{\tau_{X,Y}(\alpha)(x)}:=\alpha(m\otimes_\O x)$$ for all  $m\in \re M$.
	Then by the left \almost extension defined in Lemma  \ref{lem:ext}, we get $$\tau_{X,Y}(\alpha)(x)\in \Hom_\mathcal{LO}(M,Y)$$ for all $x\in \re X$. Then by the left \almost extension again, we obtain $$\tau_{X,Y}(\alpha)\in \Hom_\mathcal{LO}( X,\Hom_\mathcal{LO}(M,Y)).$$
	
	By construction, if $m\in \re M$, then for all $x\in X$ we have
	\begin{eqnarray}\label{eqpf:tau}
	\fx{m}{\tau_{X,Y}(\alpha)(x)}=\alpha(m\otimes_\O x).
	\end{eqnarray}
	Indeed, if we write $x=\sum_{i=0}^7e_ix_i$ as usual, then by left \almost extension in construction, for any $m\in \re M$ we have
	\begin{eqnarray*}
	\fx{m}{\tau_{X,Y}(\alpha)(x)}&=&\sum_{i=0}^7\fx{m}{e_i\odot(\tau_{X,Y}(\alpha)(x_i))}\\
	&=&\sum_{i=0}^7\fx{e_im}{\tau_{X,Y}(\alpha)(x_i)}\\
		&=&\sum_{i=0}^7e_i\fx{m}{\tau_{X,Y}(\alpha)(x_i)}\\
	&=&\sum_{i=0}^7e_i\alpha(m\otimes_\O x_i).
	\end{eqnarray*}
%	$$$$
	And it follows from Lemma \ref{lem:tf} that
	$$\alpha(m\otimes_\O x)=\sum_{i=0}^7\alpha(m\otimes_\O e_ix_i)=\sum_{i=0}^7e_i\alpha(m\otimes_\O x_i).$$
	This proves \eqref{eqpf:tau}.

	We come to show that $\tau_{X,Y}$ is $\O$-linear. Bearing Corollary \ref{lem: f(huaa M)=0 yields f=0} in mind,  for all $r\in \O$ and  $\alpha \in \Hom_\mathcal{LO}(M\otimes_\O X,Y)$ we get
	\begin{align*}
	\tau_{X,Y}(r\odot\alpha)=r\odot \tau_{X,Y}(\alpha)&\iff\fx{x}{\tau_{X,Y}(r\odot \alpha)}=\fx{x}{r\odot \tau_{X,Y}(\alpha)},\quad &\text{for all } x\in \re X;\\
	&\iff\tau_{X,Y}(r\odot \alpha)(x)=\tau_{X,Y}(\alpha)(xr),\quad &\text{for all } x\in \re X;\\
	&\iff \fx{m}{\tau_{X,Y}(r\odot \alpha)(x)}=\fx{m}{\tau_{X,Y}(\alpha)(xr)},\quad &\text{for all } x\in \re X, \text{for all } m\in \re M;\\
	&\stackrel{\eqref{eqpf:tau}}{\iff} \fx{m\otimes_\O x}{r\odot \alpha}=\fx{m\otimes_\O xr}{\alpha},\quad &\text{for all } x\in \re X, \text{for all } m\in \re M;\\
	&\iff \fx{(m\otimes_\O x)r}{\alpha}=\fx{m\otimes_\O xr}{\alpha},\quad &\text{for all } x\in \re X, \text{for all } m\in \re M.
	\end{align*}
	Then \eqref{eq:tf2} implies that $\tau_{X,Y}$ is $\O$-linear.	
	It is trivial to see that $\tau_{X,Y}$ is a bijection. Hence we conclude that $\tau_{X,Y}$ is an isomorphism of $\O$-bimodules as desired.
	
	Similarly, to show
	$$f^{*_l}\circ \tau_{X,Y}=\tau_{X',Y}\circ (1_M\otimes^{ll} f)^{*_l},$$
	it suffices to show
	\begin{eqnarray}\label{eqpf:taualph}
	\fx{m}{\fx{x'}{f^{*_l} (\tau_{X,Y}(\alpha))}}=\fx{m}{\fx{x'}{\tau_{X',Y} ((1_M\otimes^{ll} f)^{*_l}(\alpha))}}
	\end{eqnarray}
	for all $\alpha\in  \Hom_\mathcal{LO}(M\otimes_\O X,Y)$ and all $x'\in X'$, $m\in \re M$.
	By direct calculations, we have
	\begin{align*}
	\fx{m}{\fx{x'}{f^{*_l} (\tau_{X,Y}(\alpha))}}&=\fx{m}{\fx{x'}{ \tau_{X,Y}(\alpha)\circledcirc f}}%&\text{by definition of Hom functor}\\
	\\
	&=\fx{m}{(\tau_{X,Y}(\alpha) (f(x')}&\text{since $x'\in \re X'$}\\
	&=\alpha(m\otimes_\O f(x'))&\text{by \eqref{eqpf:tau}.}
	\end{align*}
	Since $m\otimes_\O x'\in \re (M\otimes_\O X')$, we have
	\begin{align*}
	\fx{m}{\fx{x'}{\tau_{X',Y} ((1_M\otimes^{ll} f)^{*_l}(\alpha))}}&=\fx{m}{\fx{x'}{\tau_{X',Y}(\alpha\circledcirc (1_M\otimes^{ll} f)) }}\\
	&=(\alpha\circledcirc (1_M\otimes^{ll} f)) (m\otimes_\O x')\\
	&=\alpha(m\otimes_\O f(x')).
	\end{align*}
	This proves \eqref{eqpf:taualph}. %The rest of proof runs similarly.
\end{proof}

\subsection{Enveloping categories} Now we introduce a new notion called 
the enveloping category. The point is that it becomes a bridge between
 the non-associative category and the  associative category.
To do this, we have to focus our attention on  additive categories.

\begin{mydef}
 A non-associative additive category is a non-associative category $\mathcal{C}$ satisfying the following
 axioms:
 \begin{enumerate}
  \item  Every set $\operatorname{Hom}_{\mathcal{C}}(X, Y)$  is equipped with a structure of an abelian group (written additively) such that composition of morphisms is biadditive with respect to this structure.
  \item  There exists a zero object  $0 \in \obj{\mathcal{C}}$  such that  $\operatorname{Hom}_{\mathcal{C}}(0,0)  = 0$.
  \item  (Existence of direct sums.) For any objects  $X_{1}, X_{2} \in \obj{\mathcal{C}}$  there exists an object  $Y \in \obj{\mathcal{C}} $ and morphisms  $p_{1}: Y \rightarrow X_{1}$, $p_{2}: Y \rightarrow X_{2}$, $i_{1}: X_{1} \rightarrow Y$,
  $i_{2}: X_{2} \rightarrow Y$  such that  $p_{1} i_{1}  = 1_{X_{1}}$, $p_{2} i_{2}  = 1_{X_{2}}$,  and  $i_{1} p_{1}+i_{2} p_{2}  = 1_{Y}$.
 \end{enumerate}
\end{mydef}

\begin{mydef}
Let  $\mathcal{C}$ be a non-associative additive  category.
\begin{enumerate}
	\item 	$(\mathcal{D}, \iota)$ is called an \textbf{associative cover category} of $\mathcal{C}$ if $\mathcal{D}$ is an associative additive category and  $\iota:\mathcal{C}\to \mathcal{D}$ is a faithful weak additive functor.
	\item An associative cover category $(\widetilde{\mathcal{C}},\iota)$ is called the \textbf{enveloping category} of $\mathcal{C}$ if for any associative cover category $({\mathcal{D}},\eta)$, there exists  a unique additive functor  $T:\widetilde{\mathcal{C}} \to \mathcal{D}$ such that the following diagram
	commutes:
	\begin{center}
		\usetikzlibrary{matrix,arrows}
		\begin{tikzpicture}[description/.style={fill=white,inner sep=2pt}]
		\matrix (m) [matrix of math nodes, row sep=3em,
		column sep=2.5em, text height=1.5ex, text depth=0.25ex]
		{\widetilde{\mathcal{C}} & &  \mathcal{D}  \\
			&\mathcal{C} & \\ };
		\path[->,font=\scriptsize]
		(m-1-1) edge node[auto] {$\exists !\  T $} (m-1-3)
		(m-2-2) edge node[auto] {$ \iota $} (m-1-1)
		edge node[auto] {$\eta $} (m-1-3);
		\end{tikzpicture}
	\end{center}
	
\end{enumerate}

\end{mydef}

	\begin{eg} We now provide some typical examples of enveloping categories.
		%Associated to every associative additive categories, there exists a natural {enveloping category}. In fact,
		\begin{enumerate}
			\item If $\mathcal{C}$ is an associative additive  category, then one can check that  ($\mathcal{C}$, $1_\mathcal{C}$) is the enveloping category of $\mathcal{C}$.
			\item The non-associative category $\mathcal{LO}$ takes
 $(\O\text{-}\textbf{Mod}_\R,\iota)$ as its enveloping category.
Here we recall that the associative category $\O\text{-}\textbf{Mod}_\R$ and the weak functor $\iota:\mathcal{LO}\to \O\text{-}\textbf{Mod}_\R$  are defined in 			Example \ref{eg:o Mod R}. We delay its proof to  Theorem \ref{thm:env cat of LO}.
		\item
	If  $\algma$ be a unital non-associative algebra, then it takes  $(\mathfrak{M}(\algma),\iota)$ as its enveloping category, where
$\mathfrak{M}(\algma)$ is  the  associative  subalgebra generated by
	 left multiplications of $\algma$ and $\iota$ is defined as
	 $$\iota:\algma\to \mathfrak{M}(\algma),\quad x\mapsto L_x.$$
Here   $\algma$ has been viewed as a specific sort of non-associative additive category with a single object, whose   morphisms are
  represented by  the  elements of $\algma$
  and whose composition is the multiplication of $\algma$. Similarly,
	  $\mathfrak{M}(\algma)$ is interpreted as an associative category.
\end{enumerate}
\end{eg}
\begin{thm}\label{thm:env cat of LO}
	The enveloping category of $\mathcal{LO}$  is $(\O\text{-}\textbf{Mod}_\R,\iota)$ and so is $\mathcal{RO}$.
\end{thm}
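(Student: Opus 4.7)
The plan is to verify the two conditions in the definition of enveloping category. First, I would check that $(\O\text{-}\textbf{Mod}_\R,\iota)$ is an associative cover of $\mathcal{LO}$: the target is plainly associative additive, and by Example \ref{eg:o Mod R} the weak-functor property of $\iota$ follows from Lemma \ref{lem:vanishing[f,g,x]=0}(iii), which gives $f\circledcirc g=f\circ g$ whenever one of $f,g$ is $\O$-linear. Faithfulness and additivity of $\iota$ are immediate because $\iota$ is the identity on every $\Hom$-set. The analogous verification for $\mathcal{RO}$ is identical in form.

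Next, I would verify the universal property. Fix any associative cover $(\mathcal{D},\eta)$; the commutativity $T\circ\iota=\eta$ forces $T(M)=\eta(M)$ on objects. The core issue is to extend $\eta$ from para-linear morphisms to arbitrary $\R$-linear maps between $\O$-bimodules. My plan rests on the following generation lemma, which I would prove first: for any two $\O$-bimodules $M,M'$, every $f\in\Hom_\R(M,M')$ is a finite $\R$-linear combination of ordinary compositions of para-linear morphisms (passing through possibly auxiliary $\O$-bimodules). The building blocks are the right multiplications $R_p$, which are para-linear by a direct associator check, together with the para-linear extensions $\ext(g)$ of $\R$-linear maps $g:\re M\to M'$ provided by Lemma \ref{lem:ext}; using the decomposition $M=\bigoplus_{j=0}^7 e_j\re M$ and the explicit expression of $\re$ in \eqref{eq:def of real part}, one rewrites $f$ as such a combination. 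The deeper algebraic input needed is the standard fact that the right multiplication algebra of an octonionic bimodule spans $\End_\R(M)$.

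Once the generation lemma is in hand, I define $T(f):=\sum_k\eta(h_{k,1})\circ\cdots\circ\eta(h_{k,n_k})$ whenever $f=\sum_k h_{k,1}\circ\cdots\circ h_{k,n_k}$ with each $h_{k,\ell}$ para-linear. Well-definedness of $T$ --- that the right-hand side does not depend on the chosen decomposition --- is the principal obstacle. The plan is to reduce this to checking that every basic relation forcing two ordinary compositions of para-linear maps to agree in $\O\text{-}\textbf{Mod}_\R$ is already a consequence of identities valid in $\mathcal{LO}$ and hence preserved by the weak additive $\eta$; the pivotal identity is the $\R$-linear equality $\iota(f\circledcirc g)=\iota(f)\circ\iota(g)+\iota([f,g,-])$, which translates the regular composition into the ordinary one. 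Additivity and preservation of ordinary composition by $T$ then follow directly from the construction.

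Uniqueness is automatic once existence is secured: any additive functor $T'$ with $T'\circ\iota=\eta$ agrees with $\eta$ on para-linear morphisms and is forced to agree with $T$ on every $\R$-linear combination of ordinary compositions of such morphisms, which by the generation lemma exhausts all morphisms of $\O\text{-}\textbf{Mod}_\R$. The statement for $\mathcal{RO}$ is then obtained by transporting the entire argument through the isomorphism of categories induced by the conjugate functor $C:\mathcal{LO}\to\mathcal{RO}$. The main obstacles are therefore the generation lemma and the well-definedness check; with these established the remaining verifications are essentially forced.
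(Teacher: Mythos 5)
There is a genuine gap, and it is exactly the one you flag as the ``principal obstacle'': well-definedness of $T$. Your plan defines $T(f)$ from an \emph{arbitrary} decomposition of $f\in\Hom_\R(M,M')$ into $\R$-linear combinations of ordinary compositions of para-linear maps, and then hopes that every relation among such compositions is ``a consequence of identities valid in $\mathcal{LO}$ and hence preserved by the weak additive $\eta$.'' This does not go through as stated, for two reasons. First, $\eta$ is only a \emph{weak} functor: it is guaranteed to satisfy $\eta(g\circledcirc f)=\eta(g)\eta(f)$ only when one of $f,g$ is an associative morphism, so a relation $h_1\circ h_2=h_3\circ h_4$ among genuinely non-$\O$-linear para-linear maps need not be preserved by $\eta$. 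Second, your ``pivotal identity'' $\iota(f\circledcirc g)=\iota(f)\circ\iota(g)+\iota([f,g,-])$ presupposes that $x\mapsto[f,g,x]$ is itself a morphism of $\mathcal{LO}$ (otherwise $\iota$ cannot be applied to it), which is nowhere established. Without a canonical choice of decomposition, the independence of $T(f)$ from the chosen presentation is precisely the hard content of the theorem, and your proposal does not supply it.

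The paper avoids this trap by proving a sharper, \emph{unique} decomposition: every $f\in\Hom_\R(M,M')$ can be written uniquely as
\begin{equation*}
f=\sum_{i,j=0}^7 f_{ij}\circ{\alpha^{ij}}_M,\qquad f_{ij}\in\Hom_\O(M,M'),\quad \alpha^{ij}\in\End_\R(\O),\ \alpha^{ij}(e_k)=\delta_{jk}e_i,
\end{equation*}
where ${\alpha}_M$ denotes the operator on $M$ induced by $\alpha\in\End_\R(\O)$ via $\alpha_M(px)=\alpha(p)x$ for $x\in\re M$. The outer factors $f_{ij}$ are $\O$-linear, hence \emph{associative} morphisms, so the weak functor $\eta$ does preserve all the compositions that actually occur; the inner factors come from $\End_\R(\O)$, which is generated by the right multiplications $R_p$, and $T$ is defined on them by $T(R_p)=\eta(R_p)$. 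Uniqueness of the decomposition makes $T$ well-defined with no further argument, and functoriality then follows from the commutation rule $f\circ\alpha_M=\alpha_{M'}\circ f$ for $\O$-linear $f$ (Lemma \ref{lem:f apl=alp f}). Your generation lemma is close in spirit to this, but it is the uniqueness and the $\O$-linearity of the outer factors --- neither of which your proposal secures --- that carry the proof. The remaining parts of your proposal (the cover-category check, uniqueness of $T$ given existence, and transport to $\mathcal{RO}$ via the conjugate functor) are fine.
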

To prove this theorem, we need   some lemmas.
Let $M$ be an $\O$-bimodule  and $\alpha\in \End_\R(\O)$. Then there associates   a map $\alpha_M\in \End_\R(M)$ defined by
$$\alpha_M(px)=\alpha(p)x$$ for all $p\in \O$ and $x\in \re M$.

\begin{lemma}\label{lem:f apl=alp f}
	Let $f\in \Hom_\O(M,M')$ and $\alpha\in\End_\R(\O) $. Then
	\begin{eqnarray}
	f\circ \alpha_M=\alpha_{M'}\circ f.
	\end{eqnarray}
\end{lemma}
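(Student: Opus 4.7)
The plan is to unwind both sides using the direct sum decomposition $M=\bigoplus_{i=0}^7 e_i\re M$ guaranteed by \eqref{eq:M=sum ei ReM}, and then use $\O$-linearity of $f$ together with the fact (Lemma \ref{lem:f re=re f}) that $f$ sends $\re M$ into $\re M'$.

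First I would fix an arbitrary $m\in M$ and write it uniquely as $m=\sum_{i=0}^7 e_i x_i$ with $x_i\in\re M$. By the very definition of $\alpha_M$, and using that $\alpha_M$ is forced to be $\R$-linear on the internal summands $e_i\re M$, we get
$$\alpha_M(m)=\sum_{i=0}^7\alpha(e_i)\,x_i.$$
Applying $f$ and invoking $\O$-linearity (with scalars $\alpha(e_i)\in\O$) yields
$$f\bigl(\alpha_M(m)\bigr)=\sum_{i=0}^7\alpha(e_i)\,f(x_i).$$

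For the other side, I would again use $\O$-linearity of $f$ on $m=\sum_i e_i x_i$ to obtain $f(m)=\sum_{i=0}^7 e_i f(x_i)$. The crucial observation is that $f(x_i)\in\re M'$ because $x_i\in\re M$ and $f$ is $\O$-linear, which is precisely Lemma \ref{lem:f re=re f}. Hence $f(m)$ is already in the decomposed form $\sum_i e_i y_i$ with $y_i:=f(x_i)\in\re M'$, and the definition of $\alpha_{M'}$ gives
$$\alpha_{M'}\bigl(f(m)\bigr)=\sum_{i=0}^7\alpha(e_i)\,f(x_i),$$
matching the previous expression.

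The argument is essentially a one-line verification once the bimodule decomposition is in place; I do not anticipate any real obstacle. The only subtlety worth flagging is that $\alpha_M$ was defined by its values on elements of the shape $px$ with $x\in\re M$, so one should note that this prescription is consistent (i.e.\ well defined on all of $M$) precisely thanks to the direct sum decomposition $M=\bigoplus_{i=0}^7 e_i\re M$, and that without the hypothesis $f\in\Hom_\O(M,M')$ (which ensures both $f(\re M)\subseteq \re M'$ and the scalar-pulling $f(q y)=q f(y)$ for $q=\alpha(e_i)$) the identity would fail.
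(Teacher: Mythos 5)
Your proof is correct and follows essentially the same route as the paper's: expand $m=\sum_i e_i x_i$ via the decomposition $M=\bigoplus_i e_i\re M$, apply the definition of $\alpha_M$, pull the scalars $\alpha(e_i)$ through $f$ by $\O$-linearity, and use $f(\re M)\subseteq\re M'$ to identify the result with $\alpha_{M'}(f(m))$. The only difference is that you make explicit the appeal to Lemma \ref{lem:f re=re f} (needed so that $\alpha_{M'}(e_i f(x_i))=\alpha(e_i)f(x_i)$), which the paper uses implicitly.
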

\begin{proof}
	This follows from direct calculations.
	For any $x=\sum_{i=0}^7e_ix_i\in M$,
	\begin{eqnarray*}
		(f\circ \alpha_M)(x)&=&\sum_{i=0}^7f(\alpha(e_i)x_i)\\
		&=&\sum_{i=0}^7\alpha(e_i)f(x_i)\\
		&=&\sum_{i=0}^7\alpha_{M'}(e_if(x_i))\\
			&=&\sum_{i=0}^7\alpha_{M'}(f(e_ix_i))\\
				&=&(\alpha_{M'}\circ f)(x).
	\end{eqnarray*}
This proves the lemma.
\end{proof}

\begin{lemma}
	For any $f\in \Hom_\R(M,M')$, there is a  unique decomposition
	\begin{eqnarray}
	f=\sum_{i,j=0}^7f_{ij}\circ{\alpha^{ij}}_M
	\end{eqnarray}
for which  $f_{ij}\in \Hom_\O(M,M')$,  $\alpha^{ij}\in\End_\R(\O)$, and  $$\alpha^{ij}(e_k)=\delta_{jk}e_i.$$
\end{lemma}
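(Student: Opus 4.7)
The plan is to exploit the direct sum decomposition $M = \bigoplus_{k=0}^{7} e_k \re M$ from \eqref{eq:M=sum ei ReM}, so that every $x \in M$ has a unique expression $x = \sum_{k=0}^{7} e_k x_k$ with $x_k \in \re M$. Under this decomposition, the operator $(\alpha^{ij})_M$ acts as a ``matrix unit'': since $\alpha^{ij}(e_k) = \delta_{jk} e_i$, one has $(\alpha^{ij})_M(x) = e_i x_j$. Consequently, for any candidate $f_{ij} \in \Hom_\O(M, M')$, the $\O$-linearity gives $(f_{ij} \circ (\alpha^{ij})_M)(x) = f_{ij}(e_i x_j) = e_i f_{ij}(x_j)$, which reduces the desired identity $f = \sum_{i,j} f_{ij} \circ (\alpha^{ij})_M$ to the pointwise assertion $f(x) = \sum_{i,j} e_i f_{ij}(x_j)$, equivalently $f(e_j y) = \sum_i e_i f_{ij}(y)$ for all $y \in \re M$.

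For existence, I would fix $j$ and consider the $\R$-linear map $h_j : \re M \to M'$ defined by $h_j(y) = f(e_j y)$. Using $M' = \bigoplus_i e_i \re M'$, I would decompose $h_j(y) = \sum_i e_i h_{ij}(y)$ uniquely, with each $h_{ij} : \re M \to \re M'$ being $\R$-linear. By assertion 2 of Corollary \ref{lem: f(huaa M)=0 yields f=0}, each $h_{ij}$ extends uniquely to an $\O$-linear map $f_{ij} \in \Hom_\O(M, M')$. A direct summation then yields
$$\sum_{i,j}(f_{ij} \circ (\alpha^{ij})_M)(x) = \sum_{i,j} e_i f_{ij}(x_j) = \sum_{i,j} e_i h_{ij}(x_j) = \sum_j h_j(x_j) = \sum_j f(e_j x_j) = f(x),$$
proving the decomposition. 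For uniqueness, I would suppose $\sum_{i,j} f_{ij} \circ (\alpha^{ij})_M = 0$, then evaluate at $e_k y$ with $y \in \re M$ to kill all terms except those with $j = k$, obtaining $\sum_i e_i f_{ik}(y) = 0$. Since $f_{ik}$ is $\O$-linear and $y \in \re M$, Lemma \ref{lem:f re=re f} forces $f_{ik}(y) \in \re M'$, and the direct sum $M' = \bigoplus_i e_i \re M'$ then yields $f_{ik}(y) = 0$ for every $y \in \re M$; assertion 1 of Corollary \ref{lem: f(huaa M)=0 yields f=0} concludes $f_{ik} = 0$.

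I do not anticipate a serious obstacle: once the ``matrix unit'' structure of the $\{\alpha^{ij}\}$ in $\End_\R(\O)$ relative to $e_0,\dots,e_7$ is recognized, the whole problem reduces to linear algebra on $\re M$ and $\re M'$, mediated by the bijection between $\Hom_\O(M,M')$ and $\Hom_\R(\re M, \re M')$ supplied by Corollary \ref{lem: f(huaa M)=0 yields f=0}. The only subtlety requiring attention is that the components $h_{ij}(y)$ land in $\re M'$ (so that their extensions are genuinely $\O$-linear and not merely para-linear), but this is automatic from the way $h_j(y)$ is decomposed along the direct sum $M' = \bigoplus_i e_i \re M'$.
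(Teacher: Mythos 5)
Your proposal is correct and follows essentially the same route as the paper: both decompose $f(e_j y)$ for $y\in\re M$ along $M'=\bigoplus_i e_i\re M'$ to define the components $f_{ij}$ on $\re M$, extend them $\O$-linearly, and verify the identity by the same computation with the matrix units $\alpha^{ij}$. The only difference is that you spell out the uniqueness argument, which the paper dismisses as trivial; your version of it is valid.
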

\begin{proof}
	Let $f(x)=\sum_{i=0}^7e_if_i(x)$, where $f_i(x)\in \re(M')$.
	We define $$f_{ij}(x)=f_i(e_jx)$$ for all $x\in \re M$ and then we obtain  $f_{ij}\in \Hom_\O(M,M')$ by para-linear extension. One can check that for all $x=\sum_{i=0}^7e_ix_i\in M$, we have
	\begin{eqnarray*}
	\sum_{i,j=0}^7(f_{ij}\circ{\alpha^{ij}}_M)(x)&=&\sum_{i,j,k=0}^7f_{ij}({\alpha^{ij}}_M(e_kx_k))\\
	&=&\sum_{i,j,k=0}^7f_{ij}({\alpha^{ij}}(e_k)x_k)\\
	&=&\sum_{i,j=0}^7f_{ij}(e_ix_j)\\
	&=&\sum_{i,j=0}^7e_if_{ij}(x_j)\\
	&=&\sum_{i,j=0}^7e_if_{i}(e_jx_j)\\
	&=&\sum_{i=0}^7e_if_{i}(x)\\
		&=&f(x).
	\end{eqnarray*}
The uniqueness is trivial.
\end{proof}

Now we come to prove Theorem \ref{thm:env cat of LO}.

\begin{proof}[Proof of Theorem \ref{thm:env cat of LO}.]
	Let $(\mathcal{D}, \eta)$ be an associative cover category of $\mathcal{LO}$. We need to define a  functor $T:\O\text{-}\textbf{Mod}_\R\to \mathcal{D}$.
	Note that $\End_\R(\O)$ can be generated by $\{R_p\mid p\in \O\}$ algebraically. We define $$T(R_p)=\eta(R_p)$$ for any $R_p\in \End_{\mathcal{LO}}(\O)$ and extend it to $\End_\R(\O)$ by preserving the composition and  linear combination. This induces the definition of $T(\alpha_M)$ for any $\alpha\in\End_\R(\O) $ and $\O$-bimodule $M$. We thus define
	\begin{align*}
	T:\qquad\qquad\O\text{-}\textbf{Mod}_\R\qquad&\xlongrightarrow[]{\hskip1cm}  \qquad\mathcal{D}\\
	M\qquad&\shortmid\!\xlongrightarrow[]{\hskip1cm}  \qquad \eta(M)\\
	f=\sum_{i,j=0}^7f_{ij}\circ{\alpha^{ij}}_M\in \Hom_{\R}(M,M')\qquad&\shortmid\!\xlongrightarrow[]{\hskip1cm}  \qquad T(f):=\sum_{i,j=0}^7\eta({f_{ij}})T({\alpha^{ij}}_M).
	\end{align*}
	We first claim that $$T(f)T( \alpha_M)=T(\alpha_{M'})T(f)$$ for all $f\in \Hom_{\O}(M,M')$ and $\alpha\in \End_\R(\O)$.
	\bfs\ $\alpha_M=R_{p}$ for some $p\in \O$. Since $\eta$ is a weak functor,  it follows that
	$$T(f)T( \alpha_M)=\eta(f)\eta(R_{p})=\eta(f\circ R_p)=\eta( R_p\circ f)= \eta( R_p)\eta(f).$$ This proves the claim.
	For any $$M\stackrel{f}{\longrightarrow}M'\stackrel{g}{\longrightarrow}M''$$
	in $\O\text{-}\textbf{Mod}_\R$, it follows from Lemma \ref{lem:f apl=alp f} that
	\begin{eqnarray*}
	T(g\circ f)&=&T(g_{mn}\circ {\alpha^{mn}}_{M'}\circ f_{ij}\circ {\alpha^{ij}}_M)\\
	&=&T(g_{mn}\circ  f_{ij} \circ{\alpha^{mn}}_{M} \circ {\alpha^{ij}}_M)\\
	&=&\eta(g_{mn}\circ  f_{ij} )T({\alpha^{mn}}_{M} \circ {\alpha^{ij}}_M)\\
	&=&T(g_{mn})T  (f_{ij} ) T({\alpha^{mn}}_{M}) T( {\alpha^{ij}}_M)\\
	&=&T(g_{mn}) T({\alpha^{mn}}_{M'})T(f_{ij} )  T( {\alpha^{ij}}_M)\\
&=&T(g)T(f).
	\end{eqnarray*}
The uniqueness is obvious.
\end{proof}
\subsection{Natural associated functor}
For a  given  functor between non-associative categories, it is natural to  ask what is its natural counterpart   functor in associative categories.

\begin{mydef}
	Let  $\mathcal{C}$ and $\mathcal{D}$ be two non-associative additive categories. Assume   $(\mathcal{\widetilde{C}},\iota_1)$ and $ (\mathcal{\widetilde{D}},\iota_2)$ are their enveloping categories, respectively.
	Suppose  $S:\mathcal{\widetilde{C}} \rightarrow \mathcal{\widetilde{D}}$, $T: \mathcal{C} \rightarrow \mathcal{D}$  are covariant  functors. A \textbf{natural lift transformation}  $\tau: S \rightarrow T $ is a one-parameter family of morphisms in  $\mathcal{\widetilde{D}}$, 	
	$$\tau = \left(\tau_{X}: S (\iota_1(X)) \rightarrow \iota_2(T (X))\right)_{X \in \obj{\mathcal{C}}}$$		making the following diagram
	commute in $\mathcal{\widetilde{D}}$:
	$$\begin{array}[c]{ccc}
	S(\iota_1(X))\quad&\stackrel{\tau_{X}}{\xlongrightarrow[]{\hskip1cm}}&\iota_2(T (X))\\
	\, \Biggm\downarrow\scriptstyle{S(\iota_1(f))}&&\Biggm\downarrow\scriptstyle{\iota_2( T(f))}\\
	S(\iota_1(Y))\quad&\stackrel{\tau_{Y}}{\xlongrightarrow[]{\hskip1cm}}&\iota_2(T (Y)).
	\end{array}$$
	for all  $ X \stackrel{f}{\longrightarrow} Y $ in  $\mathcal{C}$. We call $S$ a natural lift of $T$ in this case.
	
	A \textbf{natural extension transformation}  $\eta: T \rightarrow S $ is a one-parameter family of morphisms in  $\mathcal{\widetilde{D}}$, 	
	$$\eta = \left(\eta_{X}: \iota_2(T (X)) \rightarrow 	S(\iota_1(X))\right)_{X \in \obj{\mathcal{C}}}$$		making the following diagram
	commute in $\mathcal{\widetilde{D}}$:
	$$\begin{array}[c]{ccc}
	\iota_2(T (X))\quad&\stackrel{\eta_{X}}{\xlongrightarrow[]{\hskip1cm}}&	S(\iota_1(X))\\
	\, \Biggm\downarrow\scriptstyle{\iota_2(T(f))}&&\Biggm\downarrow\scriptstyle{ S(\iota_1(f))}\\
	\iota_2(T (Y))\quad&\stackrel{\eta_{Y}}{\xlongrightarrow[]{\hskip1cm}}&	S(\iota_1(Y)).
	\end{array}$$
	for all  $ X \stackrel{f}{\longrightarrow} Y $ in  $\mathcal{C}$. We call $S$ a natural extension of $T$ in this case.
	
Let $\tau$ be a natural lift transformation.	If each $\tau_X$ is an isomorphism  then $\tau$ is called a \textbf{natural lift isomorphism}.
We can define  \textbf{natural extension isomorphism}, similarly.  In these two cases,   $S$ is called a \textbf{natural associated functor} of $T$.

	Natural lift or extended transformation between contravariant  functors can be defined similarly.
\end{mydef}

\begin{rem}
	This definition is also a generalization of the notion of natural transformations in the associative case. It can be seen that the notion of natural extension transformation is the dual concept of the notion of natural lift transformation. The typical examples are given in Theorems \ref{thm:lif nat}
and \ref{thm:ext nat}.	%A natural associated isomorphism is a natural lift transformation as well as a natural extension transformation.
\end{rem}

Recall  that we have a $\Hom$  functor between non-associative categories
$$\Hom_\mathcal{LO}(-,M):\mathcal{LO}\stackrel{}{\xlongrightarrow[]{\hskip1cm}}\mathcal{RO}$$
for any $\O$-bimodule $M$.
Note that their enveloping  categories are both the associative category  $\O\text{-}\textbf{Mod}_\R$. We have another $\Hom$  functor $\Hom_{\O\text{-}\textbf{Mod}_\R}(-,\re M)$, and  abbreviated by $$\Hom_\R(-, \re M):\O\text{-}\textbf{Mod}_\R\stackrel{}{\xlongrightarrow[]{\hskip1cm}}\O\text{-}\textbf{Mod}_\R,$$ for any given $\O$-bimodule $M$.
We remark that $\Hom_\R(X,\re M)$ is an $\O$-bimodule via the multiplication
$$(pf)(x):=f(px),\quad (fp)(x):=f(xp)$$ for all $p\in \O$ and $x\in X$.
Similar discussion is also  valid for Hom functor $$\Hom_\R( \re M,-):\O\text{-}\textbf{Mod}_\R\stackrel{}{\xlongrightarrow[]{\hskip1cm}}\O\text{-}\textbf{Mod}_\R.$$
It turns out that the bijection $l\text{-}\lif$ defined in subsection \ref{subsec:almost linear} is a natural associated isomorphism: $$l\text{-}\lif:\Hom_\R(-,\re M)\simeq \Hom_\mathcal{LO}(-,M) .$$

\begin{thm}\label{thm:lif nat}
	For any $\O$-bimodule $M$, the left \almost linear lift map is a natural lift isomorphism of contravariant functors
	$$l\text{-}\lif:\Hom_\R(-,\re M)\simeq \Hom_\mathcal{LO}(-,M) .$$
 	
\end{thm}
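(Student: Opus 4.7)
The plan is to split the assertion into two independent verifications: (a) each $l\text{-}\lif_X\colon \Hom_\R(X,\re M)\to \Hom_\mathcal{LO}(X,M)$ is an isomorphism in the enveloping category $\O\text{-}\textbf{Mod}_\R$, and (b) the family $(l\text{-}\lif_X)_{X}$ satisfies the naturality square for the contravariant Hom functors under consideration. Part (a) is essentially free: the lemma establishing the bijection \eqref{eq:lif} in Subsection~\ref{subsec:almost linear} already provides an explicit two-sided inverse $\re_*$, and $\R$-linearity of $l\text{-}\lif_X$ in $g$ is transparent from the defining formula. Since morphisms in $\O\text{-}\textbf{Mod}_\R$ are just $\R$-linear maps, each component is automatically an isomorphism there.

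For (b), fix $f\in \Hom_\mathcal{LO}(X,Y)$ and $g\in \Hom_\R(Y,\re M)$. Contravariance of both functors reduces the naturality square to the single identity
$$l\text{-}\lif_Y(g)\circledcirc f \;=\; l\text{-}\lif_X(g\circ f),$$
since the right vertical arrow is $f^{*_l}(h)=h\circledcirc f$ by Theorem~\ref{thm:Hom functor}, and the left vertical arrow is the ordinary pullback $g\mapsto g\circ f$. I would verify this identity by direct expansion of both sides.

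The crucial ingredient is the identity $\re\circ l\text{-}\lif_Y(g) = g$, which falls out either from the inverse property $\re_*\circ l\text{-}\lif=\mathrm{id}$ or from a one-line check: $g(y)\in \re M$ while $\re(e_i\,g(e_iy))=(\re e_i)\,g(e_iy)=0$ for $i\geq 1$ by Theorem~\ref{lem:real part on good bimod }. Using this, the associator in the regular composition simplifies to
$$[l\text{-}\lif_Y(g),f,x]=-\sum_{j=1}^{7} e_j\, g(A_{e_j}(x,f)) = -\sum_{j=1}^{7} e_j\, g(f(e_jx)) + \sum_{j=1}^{7} e_j\, g(e_j f(x)),$$
and adding this to $l\text{-}\lif_Y(g)(f(x))=g(f(x))-\sum_{i=1}^{7} e_i\, g(e_i f(x))$ causes the two copies of $\sum_i e_i g(e_i f(x))$ to cancel, yielding $g(f(x))-\sum_j e_j g(f(e_jx))=l\text{-}\lif_X(g\circ f)(x)$.

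The main (and really only) obstacle is combinatorial bookkeeping: ensuring the two occurrences of $\sum_i e_i g(e_i f(x))$ appear with opposite signs so that the cancellation is exact, which hinges on the minus sign built into the definition \eqref{def:[fgx]} of the regular-composition associator. Once the identity $\re\circ l\text{-}\lif_Y(g)=g$ is recognized, the entire verification is a one-line algebraic match, and no further structural input about $\mathcal{LO}$ is required.
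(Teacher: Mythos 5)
Your proposal is correct and follows the same overall skeleton as the paper's proof: both reduce the naturality square to the single identity $(l\text{-}\lif g)\circledcirc f = l\text{-}\lif(g\circ f)$, and both hinge on the observation that $\re\circ\, l\text{-}\lif(g)=g$. Where you diverge is in how this identity is verified. The paper computes only the \emph{real parts} of the two sides --- using that the bracket $[l\text{-}\lif g,f,x]$ is purely imaginary, so that $\re$ of the regular composition agrees with $\re$ of the ordinary composition --- and then concludes equality from the uniqueness principle that a para-linear map is determined by its real part (Corollary~\ref{cor: A_p(x,f)=0 and f(px)=pf(x)}). You instead expand both sides in full: since $\re\circ\, l\text{-}\lif_Y(g)=g$, the bracket becomes $-\sum_j e_j g(A_{e_j}(x,f))$, and the terms $\sum_i e_i g(e_i f(x))$ cancel exactly against the corresponding terms of $l\text{-}\lif_Y(g)(f(x))$, giving pointwise equality with $l\text{-}\lif_X(g\circ f)(x)$ directly. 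Your computation checks out (the sign in \eqref{def:[fgx]} does produce the cancellation you describe), and it buys a self-contained argument that never invokes the uniqueness corollary; the paper's version is shorter and leans on the real-part machinery it reuses throughout Section~\ref{sec:cat}. Your handling of the isomorphism claim via the explicit inverse $\re_*$ likewise matches the paper's appeal to the lift bijection, so the proposal is complete.
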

\begin{proof}
	We need to verify that the following diagram commutes in $\O\text{-}\textbf{Mod}_\R$ for all  $f: Y \rightarrow X $ in  $\mathcal{LO}$:
	$$\begin{array}[c]{ccc}
	\Hom_\R(X,\re M)&\stackrel{l\text{-}\lif\!_{X}}{\xlongrightarrow[]{\hskip1cm}}&\Hom_\mathcal{LO}(X,M)\\
	\Biggm\downarrow\scriptstyle{f^{*_\R}}&&\Biggm\downarrow\scriptstyle{ f^{*_l}}\\
	\Hom_\R(Y,\re M)&\stackrel{l\text{-}\lif\!_{Y}}{\xlongrightarrow[]{\hskip1cm}}&\Hom_\mathcal{LO}(Y,M).
	\end{array}$$
	It suffices to show that
	\begin{eqnarray}\label{eqpf:lif is iso}
	l\text{-}\lif(f^{*_\R}(g))=f^{*_l}(l\text{-}\lif g)
	\end{eqnarray}
	for any $g\in \Hom_\R(X,\re M)$.
	In fact, for all $x\in X$, we have
	\begin{eqnarray*}
		\re \Big(\big(l\text{-}\lif  f^{*_\R}(g) \big)\ (x)\Big)&=&	\re \big(l\text{-}\lif(g\circ f)\ (x)\big)\\
		&=&g\circ f(x)
	\end{eqnarray*}
	and
	\begin{eqnarray*}
		\re \Big(\big (f^{*_l}(l\text{-}\lif g)\big)\ (x)\Big)&=&	\re\Big( \big((l\text{-}\lif g)\circledcirc f\big)\  (x)\Big)\\
		&=&	\re \Big(\big((l\text{-}\lif g)\circ f\big)(x)\Big)\\
		&=&	\re \Big((l\text{-}\lif g)\  \big( f(x)\big)\Big)\\
		&=&g\circ f(x).
	\end{eqnarray*}
	Hence \eqref{eqpf:lif is iso} follows from Corollary \ref{cor: A_p(x,f)=0 and f(px)=pf(x)}.
	Recall that $l\text{-}\lif$ is a real linear  bijection, i.e., an isomorphism in $\O\text{-}\textbf{Mod}_\R$. Hence the left \almost linear lift	$l\text{-}\lif$
	is a natural isomorphism.
\end{proof}

For the \almost linear extension map, we have  a similar result.
\begin{thm}\label{thm:ext nat}
	For any $\O$-bimodule $M$, the left \almost linear  extension
	is  a natural extension isomorphism of  functors$$l\text{-}\ext: \Hom_\R(\re M,-)\simeq \Hom_\mathcal{LO}(M,-) .$$
\end{thm}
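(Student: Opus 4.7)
The plan mirrors the proof of Theorem \ref{thm:lif nat}, but in the covariant, extension direction. There are two ingredients to verify: that each component $l\text{-}\ext_X : \Hom_\R(\re M, X) \to \Hom_\mathcal{LO}(M, X)$ is an isomorphism in $\O\text{-}\textbf{Mod}_\R$, which is already established in Lemma \ref{lem:ext}; and that the family $\{l\text{-}\ext_X\}_{X \in \obj{\mathcal{LO}}}$ is natural with respect to morphisms of $\mathcal{LO}$.

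Concretely, for any morphism $f : X \to Y$ in $\mathcal{LO}$, the functor $\Hom_\mathcal{LO}(M,-)$ sends $f$ to $f_{*_l} : g \mapsto f \circledcirc g$, while $\Hom_\R(\re M, -)$ sends the underlying real-linear map of $f$ to $f_* : h \mapsto f \circ h$. The naturality square collapses to the identity
\begin{equation*}
l\text{-}\ext_Y \circ f_* \;=\; f_{*_l} \circ l\text{-}\ext_X \quad \text{on } \Hom_\R(\re M, X).
\end{equation*}
Fixing $h \in \Hom_\R(\re M, X)$ and $m \in \re M$, the left-hand side evaluated at $m$ equals $f(h(m))$ directly from the defining formula \eqref{eq:ext-303} of the extension. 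For the right-hand side, since $l\text{-}\ext_X h \in \Hom_\mathcal{LO}(M, X)$ and $m$ is an associative element, Lemma \ref{lem:vanishing[f,g,x]=0} a\asertion{2} yields $[f, l\text{-}\ext_X h, m] = 0$, whence $(f \circledcirc l\text{-}\ext_X h)(m) = f((l\text{-}\ext_X h)(m)) = f(h(m))$.

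Thus both $(l\text{-}\ext_Y \circ f_*)(h)$ and $(f_{*_l} \circ l\text{-}\ext_X)(h)$ are left para-linear maps $M \to Y$ that coincide on $\re M$; Corollary \ref{lem: f(huaa M)=0 yields f=0} a\asertion{1} forces them to coincide on all of $M$. Hence the naturality square commutes in $\O\text{-}\textbf{Mod}_\R$, and combined with the real-linear bijectivity of each $l\text{-}\ext_X$ this shows that $l\text{-}\ext$ is a natural extension isomorphism of functors. The entire argument is essentially routine; the only substantive point is the observation that the vanishing of the associator on $\re M$ reduces the regular composition to an ordinary composition there, so I do not expect a genuine obstacle beyond being careful to invoke Lemma \ref{lem:vanishing[f,g,x]=0} with the correct assertion.
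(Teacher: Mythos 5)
Your proposal is correct and follows essentially the same route as the paper: evaluate both sides of the naturality square on $\re M$, use the vanishing of the regular-composition associator there (Lemma \ref{lem:vanishing[f,g,x]=0}) to reduce $\circledcirc$ to $\circ$, and conclude by Corollary \ref{lem: f(huaa M)=0 yields f=0} together with the bijectivity of $l\text{-}\ext$ from Lemma \ref{lem:ext}. Your write-up is in fact slightly more explicit than the paper's in naming which assertion of Lemma \ref{lem:vanishing[f,g,x]=0} is being invoked, but the argument is the same.
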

\begin{proof}
	We need to verify that the following diagram commutes in $\O\text{-}\textbf{Mod}_\R$ for all  $f: X \rightarrow Y $ in  $\mathcal{LO}$:
	$$\begin{array}[c]{ccc}
	\Hom_\R(\re M,X)&\stackrel{l\text{-}\ext\!_{X}}{\xlongrightarrow[]{\hskip1cm}}&\Hom_\mathcal{LO}(M,X)\\
	\Biggm\downarrow\scriptstyle{f_{*_\R}}&&\Biggm\downarrow\scriptstyle{ f_{*_l}}\\
	\Hom_\R(\re M,Y)&\stackrel{l\text{-}\ext\!_{Y}}{\xlongrightarrow[]{\hskip1cm}}&\Hom_\mathcal{LO}(M,Y).
	\end{array}$$
	It suffices to show that
	\begin{eqnarray}\label{eqpf:ext is iso}
	l\text{-}\ext(f_{*_\R}(g))=f_{*_l}(l\text{-}\ext g)
	\end{eqnarray}
	for any $g\in \Hom_\R(\re M,X)$.
	In fact, let $x\in \re M$, by the definition of left \almost linear extension, we have
	\begin{eqnarray*}
		l\text{-}\ext(f_{*_\R}(g))(x)&=&l\text{-}\ext(f\circ g)(x)\\
		&=&f\circ g(x)
	\end{eqnarray*}
	and
	\begin{eqnarray*}
		f_{*_l}(l\text{-}\ext g)(x)&=&f\circledcirc (l\text{-}\ext g)\,(x)\\
		&=&f\circ (l\text{-}\ext g)\,(x)\\
		&=&f\circ g(x).
	\end{eqnarray*}
	Hence \eqref{eqpf:lif is iso} follows from Corollary \ref{lem: f(huaa M)=0 yields f=0}.
	Since $l\text{-}\ext$ is a  real linear  bijection, i.e., an isomorphism in $\O\text{-}\textbf{Mod}_\R$,  it follows that the left \almost linear extension	$l\text{-}\ext$
	is a natural isomorphism.
\end{proof}
\begin{rem}
	In view of Theorems \ref{thm:lif nat} and \ref{thm:ext nat}, we can say that the counterpart functor of $\Hom_\mathcal{LO}(-,M)$ in associative case  is the usual Hom functor $$\Hom_\R(-, \re M):\O\text{-}\textbf{Mod}_\R\stackrel{}{\xlongrightarrow[]{\hskip1cm}}\O\text{-}\textbf{Mod}_\R$$
	and  the counterpart functor of $\Hom_\mathcal{LO}(M,-)$ in associative case  is the usual Hom functor $$\Hom_\R(\re M,-):\O\text{-}\textbf{Mod}_\R\stackrel{}{\xlongrightarrow[]{\hskip1cm}}\O\text{-}\textbf{Mod}_\R.$$
	Similar results  hold for Hom functor $\Hom_\mathcal{RO}(M,-)$ and  $\Hom_\mathcal{RO}(-,M)$. Hence many properties of $\Hom_\R(-, \re M)$ can be translated into properties of $\Hom_\mathcal{LO}(-,M)$.

\end{rem}

Similarly, we can show that the natural associated functor of $	M\otimes^{ll}_\O-$   is the usual tensor functor
$$\re M\otimes_\R-:\O\text{-}\textbf{Mod}_\R\stackrel{}{\xlongrightarrow[]{\hskip1cm}}\O\text{-}\textbf{Mod}_\R.$$The proof is straight forward
and is omitted.

\subsection{Exactness of functors}

In this subsection, we discuss the exactness of functors.
We attribute the issue of exactness   to the associative situation
via the notion of  the enveloping category.
\begin{mydef}
	Let $\mathcal{C}$ be a non-associative additive categories and $(\mathcal{\widetilde{C}},\iota)$ be its enveloping category. A sequence in $\mathcal{C}$ is said to be exact if its image under $\iota$ in $\mathcal{\widetilde{C}}$ is exact. Similar notions can be also defined for short exact sequence, left (right) exact sequence and complex. We can also define left (right) exact functors as usual.
\end{mydef}
\begin{thm}
	Let $M$ be an $\O$-bimodule. The Hom functor $\Hom_\mathcal{LO}(M,-)$ is a left exact functor.
\end{thm}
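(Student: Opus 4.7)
The plan is to reduce the claim to the classical left exactness of the ordinary Hom functor, via the natural extension isomorphism established in Theorem \ref{thm:ext nat}. First I would unwind the definitions: by the preceding definition of exactness in a non-associative additive category, a left exact sequence $0\to X'\to X\to X''$ in $\mathcal{LO}$ is simply a sequence of $\O$-bimodules and $\R$-linear maps whose image under the embedding weak functor $\iota:\mathcal{LO}\to \O\text{-}\textbf{Mod}_\R$ is left exact. What must be shown is therefore that
$$0\to \Hom_\mathcal{LO}(M,X')\to\Hom_\mathcal{LO}(M,X)\to\Hom_\mathcal{LO}(M,X'')$$
becomes left exact after pushing it through $\iota$ into $\O\text{-}\textbf{Mod}_\R$.

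Next I would invoke the well-known fact that the usual $\R$-linear Hom functor
$$\Hom_\R(\re M,-):\O\text{-}\textbf{Mod}_\R\to \O\text{-}\textbf{Mod}_\R$$
is left exact on the associative side, so that applying it to $0\to X'\to X\to X''$ gives left exactness of
$$0\to \Hom_\R(\re M,X')\to\Hom_\R(\re M,X)\to\Hom_\R(\re M,X'').$$
Finally I would appeal to Theorem \ref{thm:ext nat}, which asserts that $l\text{-}\ext$ is a natural extension isomorphism $\Hom_\R(\re M,-)\simeq \Hom_\mathcal{LO}(M,-)$ between functors landing in $\O\text{-}\textbf{Mod}_\R$, its commuting square intertwining the classical pushforward $f_{*_\R}$ with the regular-composition pushforward $f_{*_l}$. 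Transporting the classical left exact sequence through this natural isomorphism yields exactly the sequence in question, establishing the claim.

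The main technical point, already settled in the proof of Theorem \ref{thm:ext nat}, is that $l\text{-}\ext$ really intertwines $f_{*_l}$ and $f_{*_\R}$; this rests on the fact that the regular composition $f\circledcirc(l\text{-}\ext g)$ coincides with the ordinary composition $f\circ(l\text{-}\ext g)$ when evaluated on associative elements, which is a direct application of Lemma \ref{lem:vanishing[f,g,x]=0}. Modulo this verification, the exactness statement is a purely formal transport from the classical associative setting.
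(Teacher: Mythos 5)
Your proposal is correct and follows essentially the same route as the paper: both reduce the claim to the classical left exactness of $\Hom_\R(\re M,-)$ on $\O\text{-}\textbf{Mod}_\R$ and transport it back through the natural extension isomorphism $l\text{-}\ext$ of Theorem \ref{thm:ext nat}. The paper's proof is exactly the commutative ladder you describe, so nothing further is needed.
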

\begin{proof}
	Let $ 0\longrightarrow X \stackrel{f}{\longrightarrow} X'\stackrel{g}{\longrightarrow} X''$ be an exact sequence in $\mathcal{LO}$. We need to verify that there is an exact	sequence
	$$ 0\longrightarrow \Hom_\mathcal{LO}(M,X) \stackrel{f_{*_l}}{\longrightarrow} \Hom_\mathcal{LO}(M,X')\stackrel{g_{*_l}}{\longrightarrow} \Hom_\mathcal{LO}(M,X'').$$

	We conclude from Theorem \ref{thm:ext nat} that there is an natural associated isomorphism $\tau$ such that the following  diagram commutes in $\O\text{-}\textbf{Mod}_\R$:
	$$\begin{array}[c]{ccc}
	0 \quad&\stackrel{ }{\xlongrightarrow[]{\hskip1cm}}& 0\\
	\Biggm\downarrow\scriptstyle{\ \ }&&\Biggm\downarrow\scriptstyle{ }\\
	\Hom_\R(\re     M,X)&\stackrel{\tau_{X}}{\xlongrightarrow[]{\hskip1cm}}&\Hom_\mathcal{LO}(M,X)\\
	\Biggm\downarrow\scriptstyle{f_{*_\R}}&&\Biggm\downarrow\scriptstyle{ f_{*_l}}\\
	\Hom_\R(\re M,X')&\stackrel{\tau_{X'}}{\xlongrightarrow[]{\hskip1cm}}&\Hom_\mathcal{LO}(M,X')\\
	\Biggm\downarrow\scriptstyle{g_{*_\R}}&&\Biggm\downarrow\scriptstyle{ g_{*_l}}\\
	\Hom_\R(\re M,X'')&\stackrel{\tau_{X''}}{\xlongrightarrow[]{\hskip1cm}}&\Hom_\mathcal{LO}(M,X'').
	\end{array}$$
	By the classical theory, we have an exact	sequence
	$$ 0\longrightarrow \Hom_\R(\re M,X) \stackrel{f_{*_\R}}{\longrightarrow} \Hom_\R(\re M,X')\stackrel{g_{*_\R}}{\longrightarrow} \Hom_\R(\re M,X'').$$ The means that the sequence  $$ 0\longrightarrow \Hom_\mathcal{LO}(M,X) \stackrel{f_{*_l}}{\longrightarrow} \Hom_\mathcal{LO}(M,X')\stackrel{g_{*_l}}{\longrightarrow} \Hom_\mathcal{LO}(M,X'')$$ is   exact.
\end{proof}

An analogous result also holds  for contravariant Hom functor $\Hom_\mathcal{LO}(-,M)$.
By the similar argument, we can show that the tensor functor $	M\otimes^{ll}_\O-$ is a right exact functor.

\section{Concluding remarks}

By taking   octonionic multiplications as a model, we have introduced a new notion of \almost linearity in the non-associative setting.
This notion inspires us to bring many results from the non-associative case to the associative case. For example, we obtain two typical
non-associative categories  of   $\mathcal{LO}$ and $\mathcal{RO}$ and start the study of non-associative categories. This breaks the restrictions on the classical associative compounds.  Some new notions are proposed, which play important roles in  the study of non-associative categories. For example, we introduce the notions of weak functors and enveloping categories. This establishes  a close relation between associative category and non-associative category. These make  us to establish  the Yoneda lemma in the setting of non-associative category and introduce exactness for non-associative category.
We introduce the notion of natural associated functors so that we can understand better functors between non-associative categories.

Although  the theory of homology algebra    is very mature in its associative case   \cite{weibel1994homologicalgebra},  it remains untouched  in the non-associative case.

{It deserves to study further the theory of \almost linear operators
in  $\O$-Hilbert spaces and $\O$-Banach spaces.    Based on para-linear functionals, we can introduce the octonionic weak topology of an  $\O$-Hilbert space.  The Hom functor will enable us to study    dual operators and  reflexive spaces in octonionic functional analysis. By considering a para-bilinear multiplication in an $\O$-bimodule, we can introduce the notion of $\O$-algebras. This provides foundations for the  study of octonionic $C^*$-algebras.  The Gelfand-Naimark theorem for the  octonionic $C^*$-algebras may open  new horizons in the study   of   non-associative geometry.}

{The $\O$-\almost linearity over $\O$-bimodules may also provide  a tool in the study of Riemannian geometry.  The theory of octonion bundles \cite{Grigorian2017octonionbundles} as well as loop bundles  \cite{Grigorian2020loopbundles} have been developed by Grigorian recently for the study of $G_2$-manifold. It is quite natural to study \almost linear
  connections instead of
 linear connections in $G_2$-manifold. \cite{Leung2010riemgeom} presents a nice review of various geometric structures on Riemannian manifolds and discusses a unified description using normed division algebras. And we remark that the notion of para-linearity is   a unified description for the classical linearity on normed division algebras. We believe that this notion will play an important role in the future.
}

 \bibliographystyle{plain}

%\bibliography{hilbanach}	

\end{document}